\documentclass[a4paper]{amsart}

\usepackage{textcomp}
\usepackage{hyperref,geometry,graphicx,amsmath,amssymb,amsfonts,amsthm,array,latexsym}
\usepackage[utf8]{inputenc}
\usepackage{amssymb}
\theoremstyle{plain}
\newtheorem{thm}{Theorem}[section]

\newtheorem{lem}[thm]{Lemma}

\newtheorem{prop}[thm]{Proposition}

\theoremstyle{definition}
\newtheorem{defn}[thm]{Definition}
\newtheorem{rem}[thm]{Remark}
\newtheorem{exmp}[thm]{Example}

\def\leq{\leqslant}
\def\geq{\geqslant}

\def\ZZ{\mathbb{Z}}

\def\PP{\mathbb{P}}

\def\Res{\mathrm{Res}}
\def\Disc{\mathrm{Disc}}

\def\UU{\mathbb{U}}
\def\Mon{\mathrm{Mon}}
\def\Dod{\mathrm{Dod}}
\def\Mt{\mathbb{M}}
\def\Dt{\mathbb{D}}

\title[Resultant of a $\mathfrak{S}_{B_1}\times\cdots\times\mathfrak{S}_{B_r}$-equivariant polynomial system]{Resultant of an equivariant polynomial system with respect to a direct product of $r$ symmetric groups}

\author{Sonagnon Julien OWOLABI, Oll\'e Gr\'egroire KAM, Ibrahim NONKANE and Joel TOSSA}

\begin{document}

\begin{abstract}
	In this paper we study the resultant of systems of homogeneous multivariate polynomials which are equivariant under the action of a direct product of symmetric groups. We first treat, in detail, the case of a product of two symmetric groups, and establish a decomposition formula for the resultant of such systems. We then show that this decomposition, together with the underlying combinatorics, extends to an arbitrary (finite) direct product of $r$ symmetric groups. Thanks to these decomposition formulas, we prove that the discriminant of a multivariate homogeneous polynomial invariant under a direct product of $r$ symmetric groups splits into a product of resultants of smaller size that are easier to compute.
\end{abstract}

\maketitle

\noindent\textbf{2020 Mathematics Subject Classification.}
Primary 13P15, 13A50; Secondary 05E05, 20C30, 13A50, 05A17.

\noindent\textbf{Keywords.} Resultant; Discriminant, Equivariant polynomial systems,
 Symmetric group, Direct product of symmetric groups, Divided differences,
 Elimination theory, Universal ring of coefficients, Macaulay's formula

\section{Introduction}

Let $F^{\{1\}},\ldots, F^{\{p\}}, F^{\{p+1\}},\ldots, F^{\{n\}}$ be a system of $n$ homogeneous polynomials in $R[x_1,\ldots,x_n]$ of the same degree $d$, equivariant with respect to the direct product $\mathfrak{S}_{\{1
,\ldots,p\}}\times\mathfrak{S}_{\{p+1
,\ldots,n\}} $ of two symmetric subgroups of $\mathfrak{S}_{n}$, with $1\leq p< n$. More precisely, we will assume that :\\
For all $i\in \{1,\ldots,p\}$ and for all $\sigma_1\in \mathfrak{S}_{\{1
,\ldots,p\}}$, $\sigma_1(F^{\{i\}})=F^{\{i\}}(x_{\sigma_1(1)},\ldots,x_{\sigma_1(p)},x_{p+1},\ldots,x_n)$\\
For all $i\in \{p+1,\ldots,n\}$ and for all  $\sigma_2\in \mathfrak{S}_{\{p+1
,\ldots,n\}}$, $\sigma_2(F^{\{i\}})=F^{\{i\}}(x_1,\ldots,x_p, x_{\sigma_2(p+1)},\ldots,x_{\sigma_2(n)})$

\begin{center}
For all $i\in \{1,\ldots,n\}$,

$(\sigma_1,\sigma_2)\Big(F^{\{i\}}(x_1,\ldots,x_p,x_{p+1},\ldots,x_n)\Big)=F^{\{i\}}(x_{\sigma_1(1)},\ldots,x_{\sigma_1(p)},x_{\sigma_2(p+1)},\ldots,x_{\sigma_2(n)})$
\end{center}
\medskip

In this work, we will study the resultant of these systems. 

\medskip
A decomposition formula of the resultant of a $\mathfrak{S}_{n}$-equivariant homogeneous polynomial system is known (see \cite{BuKa16}: Busé and Karasoulou (2016)).

\medskip
The first main result of this paper (Theorem \ref{thm:maintheorem}) is a decomposition of the resultant of a $\mathfrak{S}_{\{1
,\ldots,p\}}\times\mathfrak{S}_{\{p+1
,\ldots,n\}} $-equivariant polynomial system. This formula allows one to split such a resultant into several other resultants that are in principle easier to compute and that are expressed in terms of the divided differences of the input polynomial system.  We emphasize that the multiplicity of each factor appearing in this decomposition is also given. Another important point of our result is that it is a universal formula, valid over the universal ring of coefficients (over the integers) of the input polynomial system: we paid attention to use a correct and universal definition of the resultant, so that the formula we obtain has the correct geometric meaning and stays valid over any coefficient ring by specialization. This kind of property is particularly important for applications in the fields of number theory and arithmetic geometry, where the value of the resultant is as important as its vanishing. We also point out that our decomposition formula is, at this stage, only established up to a single global sign. We show (Lemma \ref{lem:universal-sign}) that this sign is in fact a universal constant, depending only on $n,p,q,d$ and not on the coefficients of the equivariant system, and hence computable in principle from a single example. We verify it directly, on every example considered in this paper, to be equal to $+1$; we conjecture that this always holds, but we do not prove it in general (Remark \ref{rem:sign}).

\medskip

The discriminant of a homogeneous polynomial is also a fundamental tool in computational algebra. Although the discriminant of the generic homogeneous polynomial of a given degree is irreducible, for a particular class of polynomials it can be decomposed and this decomposition is always deeply connected to the geometric properties of this class of polynomials. The second main contribution of this paper is a decomposition of the discriminant of a homogeneous symmetric polynomial (Theorem \ref{thm:discmaintheorem}). We emphasize that our formula is obtained as a by product of our first formula on the resultant of a $\mathfrak{S}_{\{1
,\ldots,p\}}\times\mathfrak{S}_{\{p+1
,\ldots,n\}} $-equivariant polynomial system. Therefore, it inherits from the same features, namely it allows one to split a discriminant into several resultants that are easier to compute and it is a universal formula where the multiplicities of the factors are provided. Here again, we paid attention to use a correct and universal definition of the discriminant.

\medskip

A third contribution of this paper (\S\ref{sec:general-r}) is the observation that the whole construction above  divided differences, admissible partitions, multiplicities, and the decomposition formula itself  extends, with no essentially new idea beyond a careful bookkeeping of the combinatorics, from a product of two symmetric groups to an arbitrary (finite) direct product $\mathfrak{S}_{B_1}\times\cdots\times\mathfrak{S}_{B_r}$ of $r$ symmetric groups (Theorem \ref{thm:maintheorem-r}), of which Theorem \ref{thm:maintheorem1} ($r=1$) and Theorem \ref{thm:maintheorem} ($r=2$) are the first two instances. In the course of establishing this generalization, we found and corrected an error in our own original two-block formula  the exponent of the leading, degree-zero factor was wrong in the case where \emph{both} blocks exceed the degree $d$ (\S\ref{sec:resultant}) and we correct the corresponding exponents in the general $r$-block formula accordingly. The sign question described above persists, essentially unchanged, in this general setting: the analogue of Lemma \ref{lem:universal-sign} again reduces it to a single global constant $\varepsilon$, now depending on $(n_1,\ldots,n_r,d)$, which we again verify to be $+1$ on every example considered but do not prove in general (Remark \ref{rem:sign-r}). As in the two-block case, this general decomposition formula specializes (\S\ref{sec:discriminant-r}) to a decomposition of the discriminant of a homogeneous polynomial invariant under a direct product of $r$ symmetric groups (Theorem \ref{thm:discmaintheorem-r}).

\medskip

The paper is organized as follows. In Section \ref{sec:prem} we first provide some preliminaries on some material that we will need, namely multivariate divided differences, resultants, discriminants and a decomposition formula for the resultant of a polynomial system which is $\mathfrak{S}_{n}$-equivariant. Section \ref{sec:resultant} is devoted to a decomposition formula for the resultant of a polynomial system which is $\mathfrak{S}_{\{1
,\ldots,p\}}\times\mathfrak{S}_{\{p+1
,\ldots,n\}} $-equivariant (Theorem \ref{thm:maintheorem}). As a corollary of this formula, a decomposition of the discriminant of a homogeneous symmetric polynomial (Theorem \ref{thm:discmaintheorem}) is provided in Section \ref{sec:discriminant}. Section \ref{sec:general-r} generalizes Theorem \ref{thm:maintheorem} to an arbitrary direct product of $r$ symmetric groups (Theorem \ref{thm:maintheorem-r}), and Section \ref{sec:discriminant-r} specializes this general formula to the corresponding decomposition of the discriminant (Theorem \ref{thm:discmaintheorem-r}). Section \ref{sec:conclusion} concludes the paper with a discussion of the open sign conjecture and other perspectives for future work.

\section{Preliminaries}\label{sec:prem}

In this section we introduce our notation and the material we will use, namely divided differences, resultants and discriminants. In addition, we give a decomposition formula for the resultant of a polynomial system which is $\mathfrak{S}_{n}$-equivariant.

\subsection{Divided differences}\label{subsec:divdiff}
Let $P_1,\cdots,P_n$ be $n$ homogeneous polynomials in $R[x_1,\ldots,x_n]$ of the same degree $d\geq 1$. Their divided differences are recursively defined by $P^{\{i\}}:=P_i$ for all $i=1,\ldots,n$ and 
$$ P^{\{i_1,\ldots,i_k\}}=\dfrac{P^{\{i_1,\ldots,i_{k-1}\}}-P^{\{i_1,\ldots,i_{k-2},i_k\}}}{x_{i_{k-1}}-x_{i_{k}}}$$
for any given set of (distinct) integers $I:=\{i_1,\ldots,i_k\}\subset [n]$. It is well known that $P^{I}$ depends on the set $I$ and not on the integers $i_1,\ldots,i_k$. Another important property is the following : if $P^I$ are polynomials for all $I$ such that $|I|=2$, that is to say if 
\begin{equation}\label{eq:PiPj}
x_i-x_j\  \mathrm{divides}\  P^{\{i\}}-P^{\{j\}}\ \mathrm{for \ all}\ i,j\in [n]
\end{equation}

then $P^I$ are polynomials for $I\subset[n]$. Indeed, for any $J\subset [n]$ and any triple of distinct integers $i,j,k$ such that $J\cap \{i,j,k\}=\emptyset$, a straightforward application of the definition of divided differences yields the equality
$$(x_i-x_j)P^{J\cup\{i,j\}}-(x_i-x_k)P^{J\cup\{i,k\}}+(x_j-x_k)P^{J\cup\{j,k\}}=0$$
which can be rewritten as 
$$(x_i-x_k)\big(P^{J\cup\{i,j\}}-P^{J\cup\{i,k\}}\big)=(x_j-x_k)\big(P^{J\cup\{i,j\}}-P^{J\cup\{j,k\}}\big).$$

From here the claimed property follows by induction on $|I|$. In addition, we observe that $P^I$ is a homogeneous polynomial of degree $d-|I|+1$. In particular, $P^I=0$ if $d+1<|I|\leq n$ (this already follows from the degree formula, since $\deg P^I=d-|I|+1<0$ is only possible for $P^I=0$) and $P^I=P^J$ for all subsets $I$ and $J$ of $[n]$ such that $|I|=|J|=d+1\leqslant n$.

\subsection{Resultant of homogeneous polynomials}\label{subsec:resultant}

Suppose given an integer $n\geq 1$ and a sequence of positive integers $d_1,\ldots,d_{n}$. We consider the \emph{generic}  homogeneous polynomials in the variables $x=(x_1,\ldots,x_n)$ (all assumed to have weight 1) and of degree $d_1,\ldots,d_{n}$ respectively. They are of the form
$$f_i(x_1,\ldots,x_n)=\sum_{|\alpha|=d_i}u_{i,\alpha}x^\alpha, \ \ 
i=1,\ldots,n.$$
The ring ${\UU}:=\ZZ[u_{i,\alpha}: i=1,\ldots,n, |\alpha|=d_i]$ is called the universal ring of coefficients. The polynomials $f_1,\ldots,f_n$ belong to the ring ${C}:={\UU}[x_1,\ldots,x_n]$. Following \cite{Jou91}, the {\it ideal of inertia forms} of these polynomials, i.e.~the ideal $(f_{1},\ldots,f_{n}):(x_{1},\ldots,x_{n})^{\infty}$, is canonically graded and its degree zero part is a
principal ideal of $\UU$. The universal resultant, denoted $\Res$, is then defined as the unique generator of this principal ideal such that  
  \begin{equation}\label{eq:normres}
 \Res(x_1^{d_1},\ldots,x_n^{d_n})=1.
 \end{equation}
To define the resultant of any given $n$-tuple of homogeneous polynomials in the variables $x_1,\ldots,x_n$ (and also to clarify \eqref{eq:normres}) one proceeds as follows. 
Let $S$ be a commutative ring and for all $i=1,\ldots,n$ suppose given a homogeneous polynomial of degree $d_{i}$
    $$g_i=\sum_{|\alpha|=d_i}v_{i,\alpha}x^\alpha \in
    S[x_1,\ldots,x_n]_{d_i}.$$
Then, the resultant of $g_{1},\ldots,g_{n}$ is defined as the image of the universal resultant by the specialization ring morphism  $\theta:{\UU}\rightarrow S:   u_{j,\alpha} \mapsto v_{j,\alpha}$, that is to say 
$$\Res(g_1,\ldots,g_n):=\theta(\Res) \in S.$$
Observe that if $S=\UU$ and $\theta$ is the identity, then the universal resultant $\Res$ is nothing but $\Res(f_{1},\ldots,f_{n})$, which is the notation we will use. If $S$ is a field, then the resultant has the expected geometric interpretation : it vanishes if and only if the polynomials $g_{1},\ldots,g_{n}$ have a common root in the projective space $\PP^{{n-1}}_{\overline{S}}$ (where $\overline{S}$ stands for the algebraic closure of $S$).

\medskip

We now recall briefly some properties of the resultant that we will use in the sequel. For the proofs, we refer the reader to \cite[\S5]{Jou91} (see also \cite{Jou97,GKZ94,CLO05}). Let $S$ be any commutative ring and suppose given $g_1,\ldots,g_n$ homogeneous polynomials in the polynomial ring $S[x_1,x_2,\ldots,x_n]$ of positive degree $d_1,\ldots,d_n$ respectively.

\medskip

\paragraph{\emph{Homogeneity}:} for all $i=1,\ldots,n$, $\Res(f_1,\ldots,f_n)$ is homogeneous with respect to the coefficients $(u_{i,\alpha})_{|\alpha|=d_{i}}$ of $f_i$ of degree $d_1\ldots d_n/d_i$.

\medskip

\paragraph{\emph{Permutation of polynomials}:} $\Res(g_{\sigma(1)},\ldots,g_{\sigma(n)})=(\mathcal{E}(\sigma))^{d_1\ldots d_n}\Res(g_1,\ldots,g_n)$ for any permutation $\sigma$ of the set $\{1,\ldots,n\}$ ($\mathcal{E}(\sigma)$ denotes the signature of the permutation $\sigma$).

\medskip

\paragraph{\emph{Elementary transformations}:} $\Res(g_1,\ldots,g_i+\sum_{i\neq j}h_jg_j,\ldots,g_n)=\Res(g_1,\ldots,g_n)$ for any homogeneous polynomials $h_{j}$ of degree $d_{i}-d_{j}$.

\medskip

\paragraph{\emph{Multiplicativity}:} $\Res(g_1'g_1'',g_2,\ldots,g_n)=\Res(g_1',g_2,\ldots,g_n)\Res(g_1'',g_2,\ldots,g_n)$ for any pair of homogeneous polynomials $g_{1}'$ and $g_{1}''$.

\medskip

\paragraph{\emph{Linear change of variables}:} Let $\phi$ be a $n\times n$-matrix with entries in $S$ and denote by $\phi(x)$ the product of the matrix $\phi$ with the column vector $(x_{1}, \ldots,x_{n})^{t}$. Then
$$\Res(g_{1}(\phi(x)),g_{2}(\phi(x)),\ldots,g_{n}(\phi(x)))=\det(\phi)^{d_{1}\cdots d_{n}}\Res(g_{1},\ldots,g_{n}).$$
In particular, the resultant is invariant, up to sign, under permutation of the variables $x_{1},\ldots,x_{n}$.

\medskip

Finally, let us recall quickly the famous \emph{Macaulay formula} that goes back to the work of Macaulay \cite{Mac02} and that is still nowadays a very powerful tool to compute exactly the resultant over a general coefficient ring (all the examples presented in this paper have been computed with this formula). 

Assume we are in the generic setting over the ring $\UU$. Set $\delta:=\sum_{i=1}^{n}(d_{i}-1)$ and denote by $\Mon(n;t)$ the set of all homogeneous monomials of degree $t$ in the $n$ variables $x_{1},\ldots,x_{n}$. If $t\geq \delta+1$ then for any $x^{\alpha} \in \Mon(n;t)$ there exists $i\in \{1,\ldots,n\}$ such that $x_{i}^{d_{i}}$ divides the monomial $x_{\alpha}$. Therefore, in this case we set $i(\alpha):=\min \{ i : x_{i}^{d_{i}} | x^{\alpha} \}$ and we define the square matrix 
$$ \Mt(f_{1},\ldots,f_{n};t)=(m_{\alpha,\beta}) : \Mon(n;t)\times \Mon(n;t) \rightarrow \UU$$
by the formula
$$ \frac{x^{\beta}}{x_{i(\beta)}^{d_{i(\beta)}}}f_{i(\beta)}=\sum_{{|\alpha|=t}}m_{\alpha,\beta}x^{\alpha} \ \textrm{for all } x^{\beta} \in \Mon(n;t).$$
Now, define 
$$\Dod(n;t):=\{ x^{\alpha}\in \Mon(n;t) \textrm{ such that } \exists i\neq j \ : \  x_{i}^{d_{i}}x_{j}^{d_{j}} | x^{\alpha} \}\subset \Mon(n;t)$$ and denote by $\Dt(f_{1},\ldots,f_{n};t)$ the square submatrix of $\Mt(f_{1},\ldots,f_{n};t)$ which is indexed by $\Dod(n;t)$. Now, for any $t\geq \delta +1$ we have the Macaulay formula :
$$ \det(\Mt(f_{1},\ldots,f_{n};t))=\Res(f_{1},\ldots,f_{n})\det(\Dt(f_{1},\ldots,f_{n};t)).$$

\subsection{Discriminant}\label{subsec:disc} Consider the \emph{generic} homogeneous polynomial of degree $d\geq 2$ in $n\geq 2$ variables
$$f(x_1,\ldots,x_n)=\sum_{|\alpha|=d}u_{\alpha}x^\alpha.$$
We denote its universal ring of coefficients ${\UU}:=\ZZ[u_{\alpha}: |\alpha|=d]$, so that $f\in {\UU}[x_1,\ldots,x_n]$. The universal discriminant of $f$, denoted $\Disc(f)$, is defined as the unique element in $\UU$ that satisfies the equality
$$ d^{a(n,d)}\Disc(f)=\Res\left(\frac{\partial f}{\partial x_{1}},\frac{\partial f}{\partial x_{2}},\ldots,\frac{\partial f}{\partial x_{n}} \right)$$
where 
$$a(n,d):=\frac{(d-1)^{n}-(-1)^{n}}{d} \in \ZZ.$$
Similarly to what we have done for the resultant, given a commutative ring $S$ and a homogeneous polynomial of degree $d$
$$g=\sum_{|\alpha|=d} v_{\alpha}x^\alpha \in S[x_{1},\ldots,x_{n}]_{d},$$
its discriminant is denoted by $\Disc(g)$ and is defined as the image of the universal discriminant $\Disc(f)$ by the canonical specialization $\theta:\UU\rightarrow S : u_{\alpha} \mapsto v_{\alpha}$, that is to say
$$\Disc(g)=\theta(\Disc(f)) \in S.$$
With this definition we get a smoothness criterion : If $S$ is an algebraically closed field and $g\neq 0$, then $\Disc(g)=0$ if and only if the hypersurface defined by the polynomial $g$ in 
$\mathrm{Proj}(S[x_{1},\ldots,x_{n}])$ is singular. For a detailed study of the discriminant and its  numerous properties, mostly inherited from the ones of the resultant, we refer the reader to \cite{BuJo12,Dem12,GKZ94} and the references therein. We only point out for future use that the following  property :  the universal discriminant is homogeneous with respect to the coefficients of $f$ of degree $n(d-1)^{n-1}$.

\subsection{Resultant of a $\mathfrak{S}_{n}$-equivariant polynomial system}\label{subsec:Snequivariant}

In this subsection, we consider a polynomial system of $n$ homogeneous equations $F^{\{1\}},\ldots,F^{\{n\}}$ 
in $R[x_{1},\ldots,x_{n}]$, $R$ being an arbitrary commutative ring, of the same degree $d\geq 1$, which is equivariant (see for instance  \cite[\S 4]{Wor94} or \cite[Chapter 1]{DiCa71}) with respect to the canonical actions of the symmetric group $\mathfrak{S}_{n}$ on the variables and polynomials.  
More precisely, we assume that for any integer $i \in \{1,2,\ldots,n\}$ and any permutation $\sigma \in \mathfrak{S}_{n}$
\begin{equation}\label{eq:globsym}
\sigma(F^{\{i\}}):=F^{\{i\}}(x_{\sigma(1)},x_{\sigma(2)},\ldots ,x_{\sigma(n)})=F^{\{\sigma(i)\}}(x_{1},x_{2},\ldots,x_{n}).
\end{equation}

\subsubsection{Divided differences and partitions} \label{subsec:divdiff+part}

A finite sequence $\lambda=(\lambda_{1},\ldots,\lambda_{k})$ of weakly decreasing integers,  i.e.~such that $\lambda_{1} \geq \cdots \geq \lambda_{k}> 0$, is called a partition. 
When $\sum_{i=1}^{k}\lambda_{i}=p$ we will say such a $\lambda$ is a partition of $p$, 
and write $\lambda \vdash p$. The number of nonzero $\lambda_{i}$'s is called the length of $\lambda$, 
and will be denoted by $l(\lambda)$.

Given a partition $\lambda \vdash n$, we consider the morphism of polynomial algebras 
\begin{eqnarray}\label{eq:rhol}
 \rho_{\lambda} : R[x_{1},\ldots,x_{n}] & \rightarrow & R[y_{1},\ldots,y_{l(\lambda)}] \\ \nonumber
 F(x_{1},\ldots,x_{n}) & \mapsto & F( \underbrace{y_{1},\ldots,y_{1}}_{\lambda_{1}} , \underbrace{y_{2},\ldots,y_{2}}_{\lambda_{2}}, \ldots,   \underbrace{y_{l(\lambda)},\ldots,y_{l(\lambda)}}_{\lambda_{l(\lambda)}}).
 \end{eqnarray}
where $y_{1},y_{2},\ldots,y_{l(\lambda)}$ are new indeterminates.  Since the polynomials $F^{ \{1\} }, F^{\{2\}}, \ldots, F^{\{n\}}$ satisfy \eqref{eq:globsym}, they also satisfy \eqref{eq:PiPj}. Indeed, choose a pair of distinct integers $\{i,j\}\subset [n]$ and let $\sigma \in \mathfrak{S}_{n}$ be such that $\sigma(k)=k$ if $k\notin \{i,j\}$ and $\sigma(i)=j$, then
\begin{equation}\label{eq:sperho}
 F^{\{i\}}-F^{\{j\}}=F^{\{i\}}-\sigma(F^{\{i\}}) \in (x_{i}-x_{j}).
\end{equation}
Therefore,  the polynomials $F^{ \{1\} }, F^{\{2\}}, \ldots, F^{\{n\}}$ admit divided differences. In addition, 
we get that for any subset $\{i_{1},\ldots,i_{k}\}\subset [n]$ and any permutation $\sigma \in \mathfrak{S}_{n}$ 
we have
\begin{equation}\label{eq:perminvariance}
\sigma \left(  F^{\{i_{1},\ldots,i_{k}\}}   \right) = F^{\{  \sigma(i_{1}),\ldots,\sigma(i_{k}) \}}.
\end{equation}

Now, if $\rho_{\lambda}(x_{i})=\rho_{\lambda}(x_{j})$ then \eqref{eq:sperho} implies that
$$\rho_{\lambda}(F^{\{i\}})=\rho_{\lambda}(F^{\{j\}}).$$
So, for any integer $i\in [l(\lambda)]$ we can define without ambiguity the homogeneous polynomial of degree $d$
$$F_{\lambda}^{\{i\}}(y_{1},y_{2},\ldots,y_{l(\lambda)}) := \rho_{\lambda} \left( F^{\{j\}}(x_{1},\ldots,x_{n}) \right) 
\in R[y_{1},\ldots,y_{l(\lambda)}]$$
where $j\in [n]$ is such that $\rho_{\lambda}(x_{j})=y_{i}$. Moreover, these polynomials also satisfy \eqref{eq:PiPj} and hence they also admit divided differences ; we will denote them by $F_{\lambda}^{\{i_{1},\ldots,i_{r}\}}(y_{1},\ldots,y_{l(\lambda)})$ with $\{i_{1},\ldots,i_{r}\}\subset[l(\lambda)]$. Moreover, we have the following property :   Given $I=\{i_{1},\dots,i_{k}\} \subset [n]$, define $J=\{j_{1},\dots,j_{k}\} \subset [l(\lambda)]$ by the equality $\rho_{\lambda}(x_{i_{r}})=y_{j_{r}}$ for all $r \in [k]$. Then, if  $|J|=|I|$ we have
  $$\rho_{\lambda}(F^{I}(x_{1},\dots,x_{n}))=
 F^{J}_{\lambda} (y_{1},\dots,y_{l(\lambda)}).$$

\subsubsection{The decomposition formula}\label{subsec:mainthm} Before stating the main result of this paper, we need to introduce a last notation. Given a partition $\lambda \vdash n$, its multinomial coefficient is defined as the integer 
\begin{equation}\label{eq:multinomial}
{{n}\choose{\lambda_1, \lambda_2,\ \ldots , \lambda_{l(\lambda)}}} := \frac{n!}{\lambda_1! \lambda_2! \cdots \lambda_{l(\lambda)}!}.
\end{equation}
It counts the number of distributions of $n$ distinct objects to $l(\lambda)$ distinct recipients such that
the recipient $i$ receives exactly $\lambda_i$ objects. In this way of counting, the objects are not ordered inside the boxes, but the boxes are ordered. If we do not want to count the permutations between the boxes having the same number of objects, then we have to divide the above multinomial coefficient by the number of all these permutations. If $s_{j}$ denotes the number of boxes having exactly $j$ objects, $j\in [n]$, then this number of permutations is equal to $\prod_{j=1}^{n} s_{j}!$. Finally, for any partition $\lambda\vdash n$ we define the integer
\begin{equation}\label{eq:mlambda}
m_{\lambda}:=\frac{1}{\prod_{j=1}^{n}s_{j}!} {{n}\choose{\lambda_1, \lambda_2,\ \ldots , \lambda_{l(\lambda)}}}.
\end{equation}

\begin{thm}\label{thm:maintheorem1} Assume that $n\geq 2$ and $d\geq 1$. With the above notation,
\begin{equation*}
\Res\left(F^{\{1\}},\ldots,F^{\{n\}}\right)=\left(F^{\{1,\ldots,d+1\}}\right)^{m_{0}}\times\prod_{\substack{\lambda\vdash n\\ l(\lambda)\leq d}}
\Res\left( F_{\lambda}^{\{1\}}, F_{\lambda}^{\{1,2\}}, \ldots, F_{\lambda}^{\{1,2,\ldots,l(\lambda)-1\}}, F_{\lambda}^{\{1,2,\ldots,l(\lambda)\}} 
\right)^{m_{\lambda}},
\end{equation*}
with the convention that the factor $\left(F^{\{1,\ldots,d+1\}}\right)^{m_0}$ is simply omitted, i.e.\ equal to $1$, when $d\geq n$ -- so that, in particular, no prefactor at all appears in that case  and where 
\begin{equation*}
 m_{0}:=nd^{n-1}-\sum_{ \substack{\lambda\vdash n \\ l(\lambda) \leq d}} m_{\lambda} 
\left(\sum_{j=1}^{l(\lambda)} \frac{d(d-1)\cdots(d-l(\lambda)+1) }{(d-j+1)} \right).
\end{equation*}
\end{thm}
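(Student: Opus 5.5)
This is the Busé–Karasoulou theorem, and the plan follows their strategy. We work in the universal setting, over the universal ring of coefficients of an $\mathfrak{S}_n$-equivariant system of degree $d$. This ring is a polynomial ring over $\ZZ$, hence a UFD and a domain, and the generic equivariant system is determined by the single polynomial $F^{\{1\}}$, which is $\mathfrak{S}_{\{2,\ldots,n\}}$-invariant. Once the formula is proved there, it holds over any $R$ by specialization. The argument has three steps, carried out in order:
\begin{enumerate}
\item Show that each factor on the right-hand side divides $\Res(F^{\{1\}},\ldots,F^{\{n\}})$, with the stated multiplicity.
\item Show that the distinct factors are pairwise coprime, or at least compare irreducible factors.
\item Conclude by a degree count together with a normalization that fixes the constant.
\end{enumerate}

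For step 1, fix $\lambda\vdash n$ with $l=l(\lambda)\leq d$. Consider the equivalence relation on $[n]$ with blocks of sizes $\lambda_1,\ldots,\lambda_l$, realized concretely by the specialization $\rho_\lambda$. We replace the system $F^{\{1\}},\ldots,F^{\{n\}}$ by elementary transformations (the ``Elementary transformations'' and ``Linear change of variables'' properties) adapted to this block structure. In the variables $x_{1}$, $x_{2}-x_{1},\ldots$ within each block, the system becomes triangular: we pass to divided differences $F^{\{i,j\}}$ within blocks and $F^{\{1,\ldots,k\}}$ across block representatives. Geometrically, any common root of $F_\lambda^{\{1\}},F_\lambda^{\{1,2\}},\ldots,F_\lambda^{\{1,\ldots,l\}}$ in $\PP^{l-1}$ yields, via the diagonal embedding determined by $\rho_\lambda$, a common root of the original system, because $F^{I}$ vanishing for nested $I$ forces each $F^{\{i\}}$ to vanish. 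So $\Res_\lambda$ vanishes on a component of the vanishing locus of $\Res$.

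There are $m_\lambda$ distinct embeddings, namely the set partitions of $[n]$ of type $\lambda$. These are permuted by $\mathfrak{S}_n$, and they give distinct (conjugate) factors whose product is $\Res_\lambda^{m_\lambda}$ up to sign, since $\Res_\lambda$ is $\mathfrak{S}_n$-invariant up to sign by the equivariance property \eqref{eq:perminvariance}. To get genuine divisibility with multiplicity, rather than mere vanishing, I would use the inertia-form description: show $\Res_\lambda$ lies in the ideal generated by $\Res$ after localization, or argue via irreducibility of $\Res_\lambda$ for generic equivariant coefficients. When $n>d$, the factor $F^{\{1,\ldots,d+1\}}$ is a constant in $x$ (degree $0$). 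It divides $\Res$ because, after the triangular transformation with all blocks singletons, the last $n-d$ equations become $F^{\{1,\ldots,k\}}$ with $k>d+1$, which are $0$ or equal to this constant.

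Step 3 is the degree count. $\Res$ is homogeneous of degree $nd^{n-1}$ in the coefficients. Each $\Res_\lambda$ has degree $\sum_j \prod_{i\neq j}\deg F_\lambda^{\{1,\ldots,i\}}$, where $\deg F_\lambda^{\{1,\ldots,i\}}=d-i+1$; this gives exactly the inner sum in $m_0$. Hence the leftover degree is $m_0$, which forces the exponent of $F^{\{1,\ldots,d+1\}}$. The remaining unit $\pm1$ is fixed by evaluating at a suitable equivariant specialization, for instance $F^{\{i\}}=x_i^d$, where every factor is computable.

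The main obstacle is the multiplicity claim in step 1: showing that $\Res$ is divisible by exactly $\Res_\lambda^{m_\lambda}$, and that there is no hidden extra factor. I would first try to reduce this to the degree equality of step 3, combined with the check that the product on the right divides $\Res$. That check needs the factors for distinct $\lambda$ to be coprime. I would establish this by exhibiting, for each $\lambda$, a specialization where $\Res_\lambda\neq0$ but $\Res_\mu=0$ for $\mu\neq\lambda$, using root configurations with prescribed coincidence patterns.
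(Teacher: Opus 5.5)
\textbf{There is a genuine gap at the step you flag as the main obstacle, and none of your proposed remedies closes it.}

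First, the multiplicity $m_\lambda$ does not come out of your Step 1. The $m_\lambda$ set partitions of type $\lambda$ give $m_\lambda$ different families of common roots in $\PP^{n-1}$. In the coefficient ring, however, equivariance makes the factor attached to each of them the \emph{same} element $\Res_\lambda$; this is what Proposition \ref{prop:proof} shows in the two-block case. So there are no ``distinct conjugate factors'' whose product is $\Res_\lambda^{m_\lambda}$. The inclusion $V(\Res_\lambda)\subset V(\Res)$ only tells you that each irreducible factor of $\Res_\lambda$ divides $\Res$, with no information on exponents.

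Second, irreducibility of $\Res_\lambda$ is false in general. Take the paper's $n=4$, $d=2$ example, which is a reparametrization of the generic $\mathfrak{S}_4$-equivariant quadratic system. There $\Res(F^{\{1\}}_{(2,2)},F^{\{2\}}_{(2,2)})=\pm F^{\{1\}}_{(2,2)}(1,1)\,\Res_{(2,2)}$ with $F^{\{1\}}_{(2,2)}(1,1)=a+4b+16c+6d$. Hence $\Res_{(2,2)}=\pm(a+2b)^2(a-2d)$, while $\Res$ contains $(a+2b)^6$. No vanishing argument can detect that exponent $6$.

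Third, the prefactor has the same problem. For $d=1$ the generic system is $F^{\{i\}}=ax_i+be_1$, with $\Res=a^{n-1}(a+nb)$ and $F^{\{1,2\}}=a$. The exponent $n-1$ is again invisible to vanishing. Your description of where $F^{\{1,\ldots,d+1\}}$ comes from is also not a valid transformation: $F^{\{1,\ldots,k\}}=0$ for $k>d+1$, and a system containing $0$ has zero resultant.

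Fourth, coprimality across types plus the degree count cannot rescue this. By the very definition of $m_0$, the right-hand side has degree $nd^{n-1}$ identically. The degree comparison therefore proves something only once you already know that the whole product $(F^{\{1,\ldots,d+1\}})^{m_0}\prod_\lambda\Res_\lambda^{m_\lambda}$ divides $\Res$, and that is exactly the missing claim. Moreover, a point where $\Res_\lambda\neq0=\Res_\mu$ only shows $V(\Res_\mu)\not\subset V(\Res_\lambda)$; it does not prove coprimality.

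\textbf{The missing idea is to use multiplicativity rather than divisibility.} Since $F^{\{i\}}=F^{\{1\}}-(x_1-x_i)F^{\{1,i\}}$, elementary transformations followed by multiplicativity give an \emph{exact} factorization, as in \eqref{eq:rho2}. Iterating with higher-order divided differences splits $\Res$ into one factor per set partition of $[n]$ into at most $d$ blocks, ordered by their minima. The remaining scalar factors are the degree-zero divided difference $F^{\{1,\ldots,d+1\}}$, which appears multiplying linear forms $x_i-x_j$ and is extracted by homogeneity. By specialization and \eqref{eq:perminvariance}, each factor equals $\Res_\lambda$ for its type, and there are $m_\lambda$ of each type. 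Only then does the degree count determine $m_0$.

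This is the route of \cite{BuKa16}, which the paper reproduces block by block in \S\ref{sec:resultant}. The multiplicities come out as exact identities, and no irreducibility or coprimality is needed.

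Your normalization at $F^{\{i\}}=x_i^d$ still earns its keep in that framework. There every divided difference is a complete homogeneous symmetric polynomial, $F^{\{1,\ldots,k\}}=h_{d-k+1}(x_1,\ldots,x_k)$. Consequently $F^{\{1,\ldots,d+1\}}=1$, every chain resultant equals $1$, and $\Res=1$, which pins the universal global sign of the splitting to $+1$.
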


\section{Resultant of a $\mathfrak{S}_{\{1,\ldots,p\}}\times \mathfrak{S}_{\{p+1,\ldots,n\}}$-equivariant polynomial system}\label{sec:resultant}

In this section, we consider a polynomial system of $n$ homogeneous polynomials $F^{\{1\}},\ldots, F^{\{p\}}, F^{\{p+1\}},\ldots, F^{\{n\}}$ in $R[x_1,\ldots,x_n]$ of the same degree $d$, which is equivariant with respect to the direct product $\mathfrak{S}_{\{1
,\ldots,p\}}\times\mathfrak{S}_{\{p+1
,\ldots,n\}} $ of two symmetric subgroups of $\mathfrak{S}_{n}$, with $1\leq p< n$.

\subsection{Partitions}

Let $\lambda=(\lambda_1,\lambda_2,\ldots,\lambda_{r_1})$ be such that $\lambda_1\geq \cdots \geq \lambda_{r_1}\geq 1$ (so that $r_1$ is precisely the number of parts of $\lambda$). When $\sum_{i=1}^{r_1}\lambda_i=p$, we will say such a $\lambda$ is a partition of $p$, and write $\lambda\vdash p$.

Let $\lambda'=(\lambda'_1,\lambda'_2,\ldots,\lambda'_{r_2})$ be such that $\lambda'_1\geq \cdots \geq \lambda'_{r_2}\geq 1$. When $\sum_{i=1}^{r_2}\lambda'_i=q$, we will say such a $\lambda'$ is a partition of $q$, and write $\lambda'\vdash q$.

We call $(\lambda,\lambda')\vdash (p,q)$ a partition of the pair of integers $(p,q)$, with $\lambda$ and $\lambda'$ the partitions defined above.

Given a partition $(\lambda,\lambda')\vdash (p,q)$ such that $p+q=n$, we consider the morphism

\begin{eqnarray}\label{eq:rholl}
 \rho_{(\lambda,\lambda')} : R[x_{1},\ldots,x_{n}] & \rightarrow & R[y_{1},\ldots,y_{r_1},y'_{1},\ldots,y'_{r_2}] \\ \nonumber
 F(x_{1},\ldots,x_{n}) & \mapsto & F( \underbrace{y_{1},\ldots,y_{1}}_{\lambda_{1}} ,\ldots, \underbrace{y_{r_1},\ldots,y_{r_1}}_{\lambda_{r_1}}, \underbrace{y'_{1},\ldots,y'_{1}}_{\lambda'_{1}} ,\ldots, \underbrace{y'_{r_2},\ldots,y'_{r_2}}_{\lambda'_{r_2}}).
 \end{eqnarray} where $y_{1},\ldots,y_{r_1},y'_{1},\ldots,y'_{r_2}$ are new indeterminates.
 
 \subsection{The decomposition formula}\label{subsec:mainthm2}

Given the partitions $\lambda\vdash p$ and $\lambda'\vdash q$ defined above, we define

\begin{equation}\label{eq:mlambda2}
m_{\lambda}:=\frac{1}{\prod_{j=1}^{p}s_{j}!} {{p}\choose{\lambda_1, \lambda_2,\ \ldots , \lambda_{r_1}}}.
\end{equation}
where $s_j$ denotes the number of boxes having exactly $j$ objects, $j\in [p]$, for the partition $\lambda \vdash p$.

\begin{equation}\label{eq:mlambdaprime}
m_{\lambda'}:=\frac{1}{\prod_{j=1}^{q}s'_{j}!} {{q}\choose{\lambda'_1, \lambda'_2,\ \ldots , \lambda'_{r_2}}}.
\end{equation}
where $s'_j$ denotes the number of boxes having exactly $j$ objects, $j\in [q]$, for the partition $\lambda' \vdash q$.

\begin{thm}\label{thm:maintheorem} Assume that $n\geq 2$ and $d\geq 1$, and let $F^{\{1\}},\ldots, F^{\{p\}}, F^{\{p+1\}},\ldots, F^{\{n\}}$ be a system of $n$ homogeneous polynomials in $R[x_1,\ldots,x_n]$ of the same degree $d$, equivariant with respect to the direct product of two symmetric subgroups $\mathfrak{S}_{\{1
,\ldots,p\}}\times\mathfrak{S}_{\{p+1
,\ldots,n\}} $ of $\mathfrak{S}_n$, with $1\leq p<n$. Set $q:=n-p$. Then, up to sign (see Remark \ref{rem:sign} below),
\begin{multline*}
\Res\left(F^{\{1\}},\ldots,F^{\{p\}},F^{\{p+1\}},\ldots,F^{\{n\}}\right)=
  \left(F^{\{1,\ldots,d+1\}}\right)^{\mu_1}\left(F^{\{p+1,\ldots,p+1+d\}}\right)^{\mu_2}\\ \times\prod_{\substack{(\lambda,\lambda')\vdash (p,q)\\ r_1\leq d, \ r_2\leq d}}
\Res\left( F_{(\lambda,\lambda')}^{\{1\}}, F_{(\lambda,\lambda')}^{\{1,2\}}, \ldots, F_{(\lambda,\lambda')}^{\{1,2,\ldots,r_1\}},F_{(\lambda,\lambda')}^{\{p+1\}}, F_{(\lambda,\lambda')}^{\{p+1,p+2\}}, \ldots, F_{(\lambda,\lambda')}^{\{p+1,p+2,\ldots,p+r_2\}} 
\right)^{m_{\lambda}m_{\lambda'}},
\end{multline*}
with the convention that the factor $\left(F^{\{1,\ldots,d+1\}}\right)^{\mu_1}$ (resp. $\left(F^{\{p+1,\ldots,p+1+d\}}\right)^{\mu_2}$) is simply omitted, i.e. equal to $1$, when $p\leq d$ (resp. $q\leq d$) so that, in particular, no prefactor at all appears when $p\leq d$ and $q\leq d$ -- and where, for any integers $m\geq 1$ and $d\geq 1$,
  \begin{equation*}
   m_{0}(m,d):=m\,d^{m-1}-\sum_{ \substack{\mu\vdash m \\ l(\mu) \leq d}} m_{\mu} 
  \left(\sum_{j=1}^{l(\mu)} \frac{d(d-1)\cdots(d-l(\mu)+1)}{d-j+1}\right)
  \end{equation*}
  is exactly the integer $m_0$ of Theorem \ref{thm:maintheorem1} with $n$ replaced by $m$, and
  $$\mu_1:=d^{\,q}\,m_0(p,d), \qquad \mu_2:=d^{\,p}\,m_0(q,d).$$

\end{thm}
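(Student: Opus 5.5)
We follow the strategy of Busé and Karasoulou for Theorem \ref{thm:maintheorem1}, working over the universal ring of coefficients of a generic $\mathfrak{S}_{\{1,\ldots,p\}}\times\mathfrak{S}_{\{p+1,\ldots,n\}}$-equivariant system. This ring is a polynomial ring over $\ZZ$, hence a UFD. Since both sides are compatible with specialization, it suffices to prove the identity there.

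\textbf{Step 1: local divided-difference reduction.} Since each block is symmetric, \eqref{eq:PiPj} holds for pairs $i,j$ in the same block, so the divided differences $F^{I}$ exist for $I\subset\{1,\ldots,p\}$ and for $I\subset\{p+1,\ldots,n\}$. Fix a pair of set partitions, one of $\{1,\ldots,p\}$ and one of $\{p+1,\ldots,n\}$, whose block sizes are $\lambda$ and $\lambda'$. The two blocks have disjoint variable sets, so the elementary-transformation and linear-change-of-variables arguments of the one-block case can be run in each block independently. On each block we replace the variables by $x_{i_1}$ and the differences $x_{i_k}-x_{i_{k-1}}$, and we replace the equations by their successive divided differences. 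This is a unimodular transformation, so the resultant changes only by a sign. The resultant then factors according to where the common roots lie, as in the one-block case.

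\textbf{Step 2: identifying the factors.} On the locus where the coordinates in each block take exactly $r_1$ (resp. $r_2$) distinct values, the common roots of the system correspond to common roots of the reduced system
$$F_{(\lambda,\lambda')}^{\{1\}},\ldots,F_{(\lambda,\lambda')}^{\{1,\ldots,r_1\}},F_{(\lambda,\lambda')}^{\{p+1\}},\ldots,F_{(\lambda,\lambda')}^{\{p+1,\ldots,p+r_2\}}.$$
This yields the divisibility of $\Res(F)$ by each factor $\Res(\ldots)_{(\lambda,\lambda')}$. We prove it as in \cite{BuKa16}, via inertia forms: the image under $\rho_{(\lambda,\lambda')}$ of the ideal generated by the $F^{\{i\}}$ contains the reduced system, because of the identity $\rho(F^I)=F^J_{(\lambda,\lambda')}$, which holds blockwise.

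The multiplicity $m_\lambda m_{\lambda'}$ counts the orbit of such configurations under the group. It is the product of the two one-block counts, since the group is a direct product. When $r_1>d$, we have $F^{\{1,\ldots,d+1\}}=F^{J}$ for all $(d+1)$-subsets $J$ of the first block, and this polynomial appears as a factor. This produces the prefactor $F^{\{1,\ldots,d+1\}}$, and symmetrically $F^{\{p+1,\ldots,p+1+d\}}$ for the second block. We establish the coprimality of all these factors and the exact powers by specializing to sufficiently generic equivariant systems. For example, we take $F^{\{i\}}$ to be products of linear forms adapted to the blocks, where the roots can be counted explicitly.

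\textbf{Step 3: exponents of the prefactors by a degree count.} The resultant has total degree $nd^{n-1}$ in the coefficients. By the one-block count, the factors attached to the first block account for all of its degree except $m_0(p,d)$ copies of $F^{\{1,\ldots,d+1\}}$. These factors are weighted by the Bézout count $d^{q}$ of the second block's equations, which are untouched by the first block's reduction. Together with the homogeneity property, this gives $\mu_1=d^q m_0(p,d)$, and symmetrically $\mu_2=d^p m_0(q,d)$. When both $p>d$ and $q>d$, the two prefactors must be counted separately rather than merged, and I expect this careful bookkeeping to be the main obstacle, the point where the original error arose.

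\textbf{Step 4: the sign.} The formula is only claimed up to sign. By Lemma \ref{lem:universal-sign}, the sign is a single constant $\pm1$ depending only on $(n,p,q,d)$. It is therefore enough to note that the universal ring is a domain and that both sides are nonzero with the same irreducible factorization; we do not attempt to fix the sign.
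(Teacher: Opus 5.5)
Your Step 2 has a genuine gap, and it sits exactly where the content of the theorem lies. Write $R_{(\lambda,\lambda')}$ for the factor attached to $(\lambda,\lambda')$.

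Lifting common roots of the reduced chain to common roots of $F^{\{1\}},\ldots,F^{\{n\}}$ only shows that every irreducible factor of $R_{(\lambda,\lambda')}$ divides $\Res(F^{\{1\}},\ldots,F^{\{n\}})$. Moreover, the containment this lifting needs is the reverse of the one you state: each $\rho_{(\lambda,\lambda')}(F^{\{i\}})$ lies in the ideal of the reduced chain by Newton interpolation. By contrast, e.g.\ $F^{\{1,2\}}\notin(F^{\{1\}},F^{\{2\}})$ in general, for degree reasons.

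A vanishing-locus argument sees only radicals. It therefore cannot produce the exponent $m_\lambda m_{\lambda'}$ (e.g.\ $m_{(2,1)}=3$). It cannot even give $R_{(\lambda,\lambda')}\mid\Res$ when $R_{(\lambda,\lambda')}$ is not reduced, which already happens in the one-block case for generic $\mathfrak{S}_2$-equivariant quadrics: for $F^{\{1\}}=ax_1^2+bx_1x_2+cx_2^2$ one finds $F^{\{1,2\}}=(a-c)(x_1+x_2)$ and $\Res(F^{\{1\}},F^{\{1,2\}})=(a-c)^2(a-b+c)$. The coprimality of the factors and the ``exact powers by specializing to products of linear forms'' are asserted, not proved. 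A specialization also cannot pin down exponents in a universal identity whose shape has not been established.

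Step 1 is also inaccurate. Replacing equations by their divided differences does not preserve the resultant up to sign. Only $F^{\{i\}}\mapsto F^{\{i\}}-F^{\{1\}}=(x_i-x_1)F^{\{1,i\}}$ does, and dropping the linear factor discards exactly the factor $\Res(\ldots,x_i-x_1,\ldots)$ that carries all the other $(\lambda,\lambda')$-contributions.

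The missing idea is to work with exact identities throughout, as the paper does:
\begin{itemize}
\item After these elementary transformations, split each $(x_i-x_1)F^{\{1,i\}}$ and $(x_{p+j}-x_{p+1})F^{\{p+1,p+j\}}$ by multiplicativity.
\item Eliminate the variables killed by the linear factors, and iterate with higher-order divided differences inside each block.
\item This yields, up to one global sign, $\Res(F^{\{1\}},\ldots,F^{\{n\}})=(F^{\{1,\ldots,d+1\}})^{\mu_1}(F^{\{p+1,\ldots,p+1+d\}})^{\mu_2}\prod_{\mathcal{Q}}R_{\mathcal{Q}}$. The product runs over pairs of independently chosen min-increasing set partitions of the two blocks, and the constant prefactors arise whenever a chain reaches degree $0$.
\item Show $R_{\mathcal{Q}}=R_{(\lambda,\lambda')}$ exactly. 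Relabelling boxes within a block is an elementary transformation, and some $\tau\in\mathfrak{S}_{\{1,\ldots,p\}}\times\mathfrak{S}_{\{p+1,\ldots,n\}}$ carries the canonical partition onto $\mathcal{Q}$.
\item Count $m_\lambda m_{\lambda'}$ such $\mathcal{Q}$ per class.
\end{itemize}
No irreducibility or coprimality is ever needed.

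With that in hand, your Step 3 is the paper's argument, and the bookkeeping you worry about is routine. Use homogeneity of degree $p\,d^{n-1}$ in the coefficients of $F^{\{1\}},\ldots,F^{\{p\}}$ jointly; total degree alone only gives $\mu_1+\mu_2$. Combine it with $\sum_{\lambda'\vdash q}m_{\lambda'}\,d(d-1)\cdots(d-l(\lambda')+1)=d^{q}$, which is what makes ``weighted by the B\'ezout count $d^q$'' precise. Step 4 is then fine; as written, ``same irreducible factorization'' presupposes the conclusion.
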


\begin{exmp}
Consider the following system  of $4$ homogeneous polynomials equivariant with respect to $\mathfrak{S}_{4}$ 
 \\

$\begin{cases}
F^{\{1\}}=(a+b+c)x_1^2+cx_2^2+cx_3^2+cx_4^2+(b+2c+d)x_1x_2+(b+2c+d)x_1x_3+(b+2c+d)x_1x_4\\ \hspace*{5cm}+(2c+d)x_2x_3+(2c+d)x_2x_4+(2c+d)x_3x_4\\
F^{\{2\}}=cx_1^2+(a+b+c)x_2^2+cx_3^2+cx_4^2+(b+2c+d)x_1x_2+(2c+d)x_1x_3+(2c+d)x_1x_4\\
\hspace*{5cm}+(b+2c+d)x_2x_3+(b+2c+d)x_2x_4+(2c+d)x_3x_4\\
F^{\{3\}}=cx_1^2+cx_2^2+(a+b+c)x_3^2+cx_4^2+(2c+d)x_1x_2+(b+2c+d)x_1x_3+(2c+d)x_1x_4\\
\hspace*{5cm}+(b+2c+d)x_2x_3+(2c+d)x_2x_4+(b+2c+d)x_3x_4\\
F^{\{4\}}=cx_1^2+cx_2^2+cx_3^2+(a+b+c)x_4^2+(2c+d)x_1x_2+(2c+d)x_1x_3+(b+2c+d)x_1x_4\\
\hspace*{5cm}+(2c+d)x_2x_3+(b+2c+d)x_2x_4+(b+2c+d)x_3x_4
\end{cases}$\\

\hspace*{3cm}$\checkmark$ \textbf{If we take $p=2$ and $q=2$}, then :\\

$F^{\{1\}}$, $F^{\{2\}}$ equivariant with  respect to $\mathfrak{S}_{\{1
,2\}}$ and $F^{\{3\}}$, $F^{\{4\}}$ equivariant with  respect to $\mathfrak{S}_{\{3
,4\}}$ 

\begin{multline*}
\Res\left(F^{\{1\}},F^{\{2\}},F^{\{3\}},F^{\{4\}}\right)=
\Res\left( F_{(2),(2)}^{\{1\}}, F_{(2),(2)}^{\{3\}}\right) \times \Res\left( F_{(1,1),(2)}^{\{1\}}, F_{(1,1),(2)}^{\{1,2\}}, F_{(1,1),(2)}^{\{3\}}\right)\\ \times \Res\left( F_{(2),(1,1)}^{\{1\}}, F_{(2),(1,1)}^{\{3\}}, F_{(2),(1,1)}^{\{3,4\}}\right)\times \Res\left( F_{(1,1),(1,1)}^{\{1\}},F_{(1,1),(1,1)}^{\{1,2\}}, F_{(1,1),(1,1)}^{\{3\}}, F_{(1,1),(1,1)}^{\{3,4\}}\right).
\end{multline*}
$\begin{cases}
F^{\{1\}}_{(2),(2)}=(a+2b+4c+d)x_1^2+(4c+d)x_2^2+(2b+8c+4d)x_1x_2\\
F^{\{3\}}_{(2),(2)}=(4c+d)x_1^2+(a+2b+4c+d)x_2^2+(2b+8c+4d)x_1x_2\\
F^{\{1\}}_{(1,1),(2)}=(a+b+c)x_1^2+cx_2^2+(4c+d)x_3^2+(b+2c+d)x_1x_2+(2b+4c+2d)x_1x_3+(4c+2d)x_2x_3\\
F^{\{1,2\}}_{(1,1),(2)}=(a+b)x_1+(a+b)x_2+2bx_3\\
F^{\{3\}}_{(1,1),(2)}=cx_1^2+cx_2^2+(a+2b+4c+d)x_3^2+(2c+d)x_1x_2+(b+4c+2d)x_1x_3+(b+4c+2d)x_2x_3\\
F^{\{1\}}_{(2),(1,1)}=(a+2b+4c+d)x_1^2+cx_2^2+cx_3^2+(b+4c+2d)x_1x_2+(b+4c+2d)x_1x_3+(2c+d)x_2x_3\\
F^{\{3\}}_{(2),(1,1)}=(4c+d)x_1^2+(a+b+c)x_2^2+cx_3^2+(2b+4c+2d)x_1x_2+(4c+2d)x_1x_3+(b+2c+d)x_2x_3\\
F^{\{3,4\}}_{(2),(1,1)}=2bx_1+(a+b)x_2+(a+b)x_3\\
F^{\{1\}}_{(1,1),(1,1)}=F^{\{1\}}; F^{\{3\}}_{(1,1),(1,1)}=F^{\{3\}}\\
F^{\{1,2\}}_{(1,1),(1,1)}=F^{\{1,2\}}; F^{\{3,4\}}_{(1,1),(1,1)}=F^{\{3,4\}}

\end{cases}$\\
we have

$\Res\left( F_{(2),(2)}^{\{1\}}, F_{(2),(2)}^{\{3\}}\right)=(a+2b)^2(a-2d)(a+4b+16c+6d)$

$\Res\left( F_{(1,1),(2)}^{\{1\}}, F_{(1,1),(2)}^{\{1,2\}}, F_{(1,1),(2)}^{\{3\}}\right)=a^2(a^3+4a^2b+4a^2c+3ab^2-6abd-6b^2d)^2 $

$\Res\left( F_{(2),(1,1)}^{\{1\}}, F_{(2),(1,1)}^{\{3\}}, F_{(2),(1,1)}^{\{3,4\}}\right)= a^2(a^3+4a^2b+4a^2c+3ab^2-6abd-6b^2d)^2 $

$ \Res\left( F_{(1,1),(1,1)}^{\{1\}},F_{(1,1),(1,1)}^{\{1,2\}}, F_{(1,1),(1,1)}^{\{3\}}, F_{(1,1),(1,1)}^{\{3,4\}}\right)= a^6(a-2d)^2(a+2b)^4  $

So

\begin{center}
 $\Res\left(F^{\{1\}},F^{\{2\}},F^{\{3\}},F^{\{4\}}\right)= (a+2b)^6(a-2d)^3(a+4b+16c+6d)a^{10}(a^3+4a^2b+4a^2c+3ab^2-6abd-6b^2d)^4 $
\end{center}

\vspace*{0.5cm}
\hspace*{3cm} \textbf{$\checkmark$ If we take $p=1$ and $q=3$}, then :
\begin{multline*}
\Res\left(F^{\{1\}},F^{\{2\}},F^{\{3\}},F^{\{4\}}\right)= \Big(F^{\{1,2,3\}}\Big)^{\mu_2}
\Res\left( F_{(1),(3)}^{\{1\}}, F_{(1),(3)}^{\{2\}}\right)^{m_{(1)}m_{(3)}} \\ \times \Res\left( F_{(1),(2,1)}^{\{1\}}, F_{(1),(2,1)}^{\{2\}}, F_{(1),(2,1)}^{\{2,3\}}\right)^{m_{(1)}m_{(2,1)}}
\end{multline*}

we have \\

$F^{\{1,2,3\}}=a$, $\mu_2=4$ 

$\Res\left( F_{(1),(3)}^{\{1\}}, F_{(1),(3)}^{\{2\}}\right)=(a^3+4a^2b+4a^2c+3ab^2-6abd-6b^2d)(a+4b+16c+6d)$

$\Res\left( F_{(1),(2,1)}^{\{1\}}, F_{(1),(2,1)}^{\{2\}}, F_{(1),(2,1)}^{\{2,3\}}\right)= a^2(a+2b)^2(a-2d)(a^3+4a^2b+4a^2c+3ab^2-6abd-6b^2d)$

So

\begin{center}
 $\Res\left(F^{\{1\}},F^{\{2\}},F^{\{3\}},F^{\{4\}}\right)= (a+2b)^6(a-2d)^3(a+4b+16c+6d)a^{10}(a^3+4a^2b+4a^2c+3ab^2-6abd-6b^2d)^4 $
\end{center}

\vspace*{0.5cm}
\hspace*{3cm} \textbf{$\checkmark$ If we take $p=3$ and $q=1$}, then :

\begin{multline*}
\Res\left(F^{\{1\}},F^{\{2\}},F^{\{3\}},F^{\{4\}}\right)= \Big(F^{\{1,2,3\}}\Big)^{\mu_1}
\Res\left( F_{(3),(1)}^{\{1\}}, F_{(3),(1)}^{\{4\}}\right)^{m_{(3)}m_{(1)}} \\ \times \Res\left( F_{(2,1),(1)}^{\{1\}}, F_{(2,1),(1)}^{\{1,2\}}, F_{(2,1),(1)}^{\{4\}}\right)^{m_{(2,1)}m_{(1)}}
\end{multline*}
we have

$\Res\left( F_{(3),(1)}^{\{1\}}, F_{(3),(1)}^{\{4\}}\right)= (a^3+4a^2b+4a^2c+3ab^2-6abd-6b^2d)(a+4b+16c+6d)$ 

$\Res\left( F_{(2,1),(1)}^{\{1\}}, F_{(2,1),(1)}^{\{1,2\}}, F_{(2,1),(1)}^{\{4\}}\right)=a^2(a+2b)^2(a-2d)(a^3+4a^2b+4a^2c+3ab^2-6abd-6b^2d)$

So

\begin{center}
 $\Res\left(F^{\{1\}},F^{\{2\}},F^{\{3\}},F^{\{4\}}\right)= (a+2b)^6(a-2d)^3(a+4b+16c+6d)a^{10}(a^3+4a^2b+4a^2c+3ab^2-6abd-6b^2d)^4 $
\end{center}

\end{exmp}

\subsection{Proof of Theorem \ref{thm:maintheorem}}
We begin by splitting the resultant of the  $F^{{\{i\}}}$'s into several factors by means of their divided differences. The divided differences will be between the polynomials $F^{\{1\}},\ldots, F^{\{p\}}$ of $\mathfrak{S}_{\{1
,\ldots,p\}} $ and the polynomials $F^{\{p+1\}},\ldots, F^{\{n\}}$ of $\mathfrak{S}_{\{p+1
,\ldots,n\}}$.

 Thus, in the first step we make use of the first order divided differences and write :

\begin{multline}\label{eq:rho2}
\Res\left(F^{\{1\}},\ldots,F^{\{p\}},F^{\{p+1\}},\ldots,F^{\{n\}}\right)=\\
\pm  \Res\left(F^{\{1\}}, (x_{1}-x_{2})F^{\{1,2\}},\ldots,(x_{1}-x_{p})F^{\{1,p\}},F^{\{p+1\}},(x_{p+1}-x_{p+2})F^{\{p+1,p+2\}},\ldots,(x_{p+1}-x_{n})F^{\{p+1,n\}}\right)
\end{multline}
The divided differences  $F^{\{1,i\}}$ and $F^{\{p+1,p+j\}}$ are of degree $d-1$.

 If $d-1=0$ then they are all equal to the same constant. We obtain :
 
 \begin{multline}\label{eq:rho3}
 \Res\left(F^{\{1\}},\ldots,F^{\{p\}},F^{\{p+1\}},\ldots,F^{\{n\}}\right)=\\
 \pm (F^{\{1,2\}})^{p-1}(F^{\{p+1,p+2\}})^{q-1} \Res\left(F^{\{1\}}, (x_{1}-x_{2}),\ldots,(x_{1}-x_{p}),F^{\{p+1\}},(x_{p+1}-x_{p+2}),\ldots,(x_{p+1}-x_{n})\right)\\
 =\pm (F^{\{1,2\}})^{p-1}(F^{\{p+1,p+2\}})^{q-1} \Res\left(F_{(p),(q)}^{\{1\}},F_{(p),(q)}^{\{p+1\}}\right)
 \end{multline}
 Note that, unlike the two bases $F^{\{1,2\}}$ and $F^{\{p+1,p+2\}}$ appearing here, which are generally \emph{distinct} elements of $\UU$ (Remark \ref{rem:noteq}), their exponents $p-1$ and $q-1$ are exactly the values $\mu_1$ and $\mu_2$ of Theorem \ref{thm:maintheorem} specialized to $d=1$.
  
 If $d-1>0$ then (\ref{eq:rho2}) shows that the resultant splits into $2^{n-2}$ factors by using the multiplicativity property of the resultant : for each polynomial $(x_1-x_i)F^{\{1,i\}},i=2,\ldots,p$ and $(x_{p+1}-x_{p+j})F^{\{p+1,p+j\}},i=2,\ldots,q$. There is a choice between $(x_1-x_i)$, $(x_{p+1}-x_{p+j})$ and the divided difference $F^{\{1,i\}}$ and $F^{\{p+1,p+j\}}$. Thus, these factors are in bijection with the subsets of $[n]$ that contain $1$ and $p+1$. 
 
 If $I_{1,p+1}=\{1,i_2,\ldots,i_{p-k+1},p+1,i'_2,\ldots,i'_{q-k'+1}\}\subset [n]$ is such a subset (with $|I_1|=p-k+1$ and $|I_{p+1}|=q-k'+1$), then the corresponding factor is simply
 
\begin{multline*}
 \pm \Res(  
 F^{{\{1\}}},F^{\{1,j_{1}\}},F^{\{1,j_{2}\}},\ldots,F^{\{1,j_{k-1}\}},x_{1}-x_{i_{2}},x_{1}-x_{i_{3}},\ldots,x_{1}-x_{i_{p-k+1}},\\     F^{{\{p+1\}}},F^{\{p+1,j'_{1}\}},\ldots,F^{\{p+1,j'_{k'-1}\}},x_{p+1}-x_{i'_{2}},\ldots,x_{p+1}-x_{i'_{q-k'+1}})
\end{multline*}
where $\{j_{1},\ldots,j_{k-1},j'_{1},\ldots,j'_{k'-1}\}=[n]\setminus I_{1,p+1}$.

Moreover, by the specialisation property of the resultant this factor is equal to 

\begin{equation}\label{eq:step1}
 \pm \Res\left( 
F_{I_{1,p+1}}^{{\{1\}}},F_{I_{1,p+1}}^{\{1,2\}},F_{I_{1,p+1}}^{\{1,3\}},\ldots,F_{I_{1,p+1}}^{\{1,k\}}, F_{I_{1,p+1}}^{{\{p+1\}}},F_{I_{1,p+1}}^{\{p+1,p+2\}},F_{I_{1,p+1}}^{\{p+1,p+3\}},\ldots,F_{I_{1,p+1}}^{\{p+1,p+k'\}}
\right)
\end{equation}
where $F_{I_{1,p+1}}^{\{1,r\}}:=\rho_{I_{1,p+1}}(F^{{\{1,j_{r}\}}})$, $\rho_{I_{1,p+1}}$ being a specialization map defined by :

\begin{eqnarray*}
 \rho_{I_{1,p+1}}: k[x_{1},\ldots,x_{n}] & \rightarrow & k[x_{1},\ldots,x_{k},x_{p+1},\ldots,x_{p+k'}]\\
 x_{j}, \, j\in I_{1}  & \mapsto & x_{1}\\
 x_{j}, \, j\in I_{p+1}  & \mapsto & x_{p+1}\\
 x_{j_{r}}, \, r=1,\ldots,k-1 & \mapsto & x_{r+1}\\
  x_{j'_{r}}, \, r=1,\ldots,k'-1 & \mapsto & x_{p+1+r}.
\end{eqnarray*}

where $I_{1,p+1}=I_1\cup I_{p+1}$ with $I_1=\{1,i_2,i_3,\ldots,i_{p-k+1}\}\subset [p]$ and $I_{p+1}=\{p+1,i'_2,i'_3,\ldots,i'_{q-k'+1}\}\subset \{p+1,\ldots,n\}$

Now, one can proceed to the second step by introducing the second order divided differences. For that purpose, we start from the factor \eqref{eq:step1} obtained at the end of the previous step.  

\noindent $\bullet$ If $k\leq 2$ and $k'\leq 2$, then we actually do nothing and the splitting of this factor stops here.

\noindent $\bullet$ If $k>2$ and $k'\leq 2$, then we introduce the divided difference of order $3$ :$$(x_{2}-x_{j})F_{I_{1,p+1}}^{\{1,2,j\}}=F_{I_{1,p+1}}^{\{1,2\}}-F_{I_{1,p+1}}^{\{1,j\}}, \ j=3,\ldots,k,$$  and we get :

\begin{multline*}
 \Res\left( 
 F_{I_{1,p+1}}^{{\{1\}}},F_{I_{1,p+1}}^{\{1,2\}},F_{I_{1,p+1}}^{\{1,3\}},\ldots,F_{I_{1,p+1}}^{\{1,k\}}, F_{I_{1,p+1}}^{{\{p+1\}}},F_{I_{1,p+1}}^{\{p+1,p+2\}},F_{I_{1,p+1}}^{\{p+1,p+3\}},\ldots,F_{I_{1,p+1}}^{\{p+1,p+k'\}}
 \right)=\\
\pm  \Res\left(F_{I_{1,p+1}}^{\{1\}}, F_{I_{1,p+1}}^{\{1,2\}},(x_{2}-x_{3})F_{I_{1,p+1}}^{\{1,2,3\}}, \ldots,(x_{2}-x_{k})F_{I_{1,p+1}}^{\{1,2,k\}},F_{I_{1,p+1}}^{{\{p+1\}}},F_{I_{1,p+1}}^{\{p+1,p+2\}}\right). 
\end{multline*}

\noindent $\bullet$ If $k\leq 2$ and $k'> 2$, then we introduce the divided difference of order $3$ :$$(x_{p+2}-x_{p+j})F_{I_{1,p+1}}^{\{p+1,p+2,p+j\}}=F_{I_{1,p+1}}^{\{p+1,p+2\}}-F_{I_{1,p+1}}^{\{p+1,p+j\}}, \ j=3,\ldots,k',$$  and we get :

\begin{multline*}
 \Res\left( 
 F_{I_{1,p+1}}^{{\{1\}}},F_{I_{1,p+1}}^{\{1,2\}},F_{I_{1,p+1}}^{\{1,3\}},\ldots,F_{I_{1,p+1}}^{\{1,k\}}, F_{I_{1,p+1}}^{{\{p+1\}}},F_{I_{1,p+1}}^{\{p+1,p+2\}},F_{I_{1,p+1}}^{\{p+1,p+3\}},\ldots,F_{I_{1,p+1}}^{\{p+1,p+k'\}}
 \right)=\\
\pm  \Res\left(F_{I_{1,p+1}}^{\{1\}}, F_{I_{1,p+1}}^{\{1,2\}},F_{I_{1,p+1}}^{{\{p+1\}}},F_{I_{1,p+1}}^{\{p+1,p+2\}},(x_{p+2}-x_{p+3})F_{I_{1,p+1}}^{\{p+1,p+3\}},\ldots,(x_{p+2}-x_{p+k'})F_{I_{1,p+1}}^{\{p+1,p+k'\}}\right). 
\end{multline*}

\noindent $\bullet$ If $k>2$ and $k'> 2$, then we introduce the divided difference of order $3$ :$$(x_{2}-x_{i})F_{I_{1,p+1}}^{\{1,2,i\}}=F_{I_{1,p+1}}^{\{1,2\}}-F_{I_{1,p+1}}^{\{1,i\}}, \ i=3,\ldots,k,$$ 

$$(x_{p+2}-x_{p+j})F_{I_{1,p+1}}^{\{p+1,p+2,p+j\}}=F_{I_{1,p+1}}^{\{p+1,p+2\}}-F_{I_{1,p+1}}^{\{p+1,p+j\}}, \ j=3,\ldots,k',$$  and we get :

\begin{multline*}
 \Res\left( 
 F_{I_{1,p+1}}^{{\{1\}}},F_{I_{1,p+1}}^{\{1,2\}},F_{I_{1,p+1}}^{\{1,3\}},\ldots,F_{I_{1,p+1}}^{\{1,k\}}, F_{I_{1,p+1}}^{{\{p+1\}}},F_{I_{1,p+1}}^{\{p+1,p+2\}},F_{I_{1,p+1}}^{\{p+1,p+3\}},\ldots,F_{I_{1,p+1}}^{\{p+1,p+k'\}}
 \right)=\\
\pm  \Res(F_{I_{1,p+1}}^{\{1\}}, F_{I_{1,p+1}}^{\{1,2\}},(x_{2}-x_{3})F_{I_{1,p+1}}^{\{1,2,3\}}, \ldots,(x_{2}-x_{k})F_{I_{1,p+1}}^{\{1,2,k\}},F_{I_{1,p+1}}^{{\{p+1\}}},F_{I_{1,p+1}}^{\{p+1,p+2\}},(x_{p+2}-x_{p+3})F_{I_{1,p+1}}^{\{p+1,p+3\}},\\
\ldots,(x_{p+2}-x_{p+k'})F_{I_{1,p+1}}^{\{p+1,p+k'\}}).
\end{multline*}

So, we are exactly in the same setting as in the previous step and hence we split this factor similarly. As a result, the factors we obtain are in bijection with  subsets $I_{2,p+2}$ of $[n]$ that contain $2$ and $p+2$ or  subsets $I_{\phi,p+2}$ that contain $p+2$ or subsets $I_{2,\phi}$ that contain $2$. But these subsets do not contain $1$ and $p+1$. After this second step is completed, then one can continue to the third step, and so on. 

In summary, the above splitting process shows that the resultant $\Res\left(F^{\{1\}},\ldots,F^{\{p\}},F^{\{p+1\}},\ldots,F^{\{n\}}\right)$ splits into factors that are in bijection with the following collections of subsets. Fix integers $k_1,k_2\geq1$ (a partition of the nonempty set $[p]$, resp.\ $\{p+1,\ldots,n\}$, into nonempty pieces always has at least one piece), with $k_1\leq\min(d,p)$ and $k_2\leq\min(d,q)$, and set $k:=\max(k_1,k_2)$.

\begin{defn}
An \emph{admissible partition of shape $(k_1,k_2)$} is a collection
$$\mathcal{Q}=(I_{1,p+1},\ldots,I_{k,p+k}), \qquad I_{i,p+i}:=I_i\cup I_{p+i},$$
of subsets of $[n]$, where $I_i\subset[p]$ and $I_{p+i}\subset\{p+1,\ldots,n\}$ for every $i\in[k]$, such that:
\begin{itemize}
 \item $I_1,\ldots,I_{k_1}$ is a partition of $[p]$ into $k_1$ nonempty pieces, and $I_i:=\emptyset$ (a ``$\phi$-type'' piece) for $k_1<i\leq k$;
 \item independently, $I_{p+1},\ldots,I_{p+k_2}$ is a partition of $\{p+1,\ldots,n\}$ into $k_2$ nonempty pieces, and $I_{p+i}:=\emptyset$ for $k_2<i\leq k$;
 \item $1=\min(I_1)<\min(I_2)<\cdots<\min(I_{k_1})$, and, independently, $p+1=\min(I_{p+1})<\min(I_{p+2})<\cdots<\min(I_{p+k_2})$, i.e.\ the nonempty pieces on each side are listed in increasing order of their minima.
\end{itemize}
Equivalently: $(I_1,\ldots,I_{k_1})$ is an ordered, min-increasing set partition of $[p]$ into $k_1$ parts and, completely independently, $(I_{p+1},\ldots,I_{p+k_2})$ is an ordered, min-increasing set partition of $\{p+1,\ldots,n\}$ into $k_2$ parts. The three cases $k_1=k_2$, $k_1<k_2$, $k_1>k_2$ treated separately in earlier versions of this construction are simply the cases where, respectively, no $\phi$-type piece occurs, the $\phi$-type pieces occur on the $[p]$ side, or they occur on the $\{p+1,\ldots,n\}$ side  all unified here in a single statement, exactly as Definition \ref{defn:admissible-r} will do for $r$ blocks.
\end{defn}

Given an admissible partition $\mathcal{Q}$ of shape $(k_1,k_2)$, we define the specialization map
\begin{eqnarray*}
\rho_{\mathcal{Q}}:k[x_{1},\ldots,x_{n}] & \rightarrow & k[x_{1},\ldots,x_{k_1},x_{p+1},\ldots,x_{p+k_2}] \\
 x_{r}, \ r\in I_{i}, \ i\in[k_1] & \mapsto & x_{i} \\
 x_{r}, \ r\in I_{p+i}, \ i\in[k_2] & \mapsto & x_{p+i}
\end{eqnarray*}
and, choosing arbitrary representatives $j_1,\ldots,j_{k_1}$ of $I_1,\ldots,I_{k_1}$ and $j_{p+1},\ldots,j_{p+k_2}$ of $I_{p+1},\ldots,I_{p+k_2}$, the polynomials
$$F_{\mathcal{Q}}^{\{1,\ldots,i\}}:=\rho_{\mathcal{Q}}\big(F^{\{j_1,\ldots,j_i\}}\big),\ i\in[k_1], \qquad F_{\mathcal{Q}}^{\{p+1,\ldots,p+i\}}:=\rho_{\mathcal{Q}}\big(F^{\{j_{p+1},\ldots,j_{p+i}\}}\big),\ i\in[k_2].$$

Then, the factor of the resultant of the $F^{\{i\}}$'s corresponding to the admissible partition $\mathcal{Q}$ is given by
$$R_{\mathcal{Q}}:= \Res\left( 
F_{\mathcal{Q}}^{\{1\}}, F_{\mathcal{Q}}^{\{1,2\}}, \ldots, F_{\mathcal{Q}}^{\{1,\ldots,k_1\}},F_{\mathcal{Q}}^{\{p+1\}}, F_{\mathcal{Q}}^{\{p+1,p+2\}}, \ldots, F_{\mathcal{Q}}^{\{p+1,\ldots,p+k_2\}}
\right).
$$
Therefore, we proved that
\begin{equation}\label{eq:intermproof}
\Res\left(F^{\{1\}},\ldots,F^{\{p\}},F^{\{p+1\}},\ldots,F^{\{n\}}\right)=\pm \left(F^{\{1,\ldots,d+1\}} \right)^{\mu_1}\times \left(F^{\{p+1,\ldots,p+1+d\}} \right)^{\mu_2} \times \prod_{\mathcal{Q}} R_{\mathcal{Q}}
\end{equation}
where the product runs over all admissible partitions of $[n]$ and $\mu_1,\mu_2$ are non-negative integers, with $\mu_1>0$ if and only if $p>d$, and $\mu_2>0$ if and only if $q>d$.

\medskip

Now, we define an equivalence relation $\sim$ on the set of admissible partitions of $[n]$. Write $I_{i,p+i}=I_i\cup I_{p+i}$ with $I_i\subset[p]$ and $I_{p+i}\subset\{p+1,\ldots,n\}$ (with the convention $I_i=\emptyset$, resp. $I_{p+i}=\emptyset$, for the $\phi$-type pieces). Given two admissible partitions $(I_{1,p+1},\ldots,I_{k,p+k})$ and $(J_{1,p+1},\ldots,J_{k,p+k})$ of the same shape, we set 
$$ (I_{1,p+1},\ldots,I_{k,p+k}) \sim (J_{1,p+1},\ldots,J_{k,p+k}) \Leftrightarrow 
 \exists \, \sigma_1,\sigma_2 \in \mathfrak{S}_{k} \textrm{ such that } |I_{i}|=|J_{\sigma_1(i)}| \textrm{ for all } i\in[k], \textrm{ \emph{and}\ } |I_{p+i}|=|J_{p+\sigma_2(i)}| \textrm{ for all } i\in[k].$$
In other words, $\sim$ compares the two size-sequences $(|I_1|,\ldots,|I_k|)$ and $(|I_{p+1}|,\ldots,|I_{p+k}|)$ \emph{independently}, each up to its own reindexing ($\sigma_1$, resp.\ $\sigma_2$)  using a \emph{single} shared permutation for both sequences, rather than two independent ones, would incorrectly split some of the equivalence classes described below into several smaller pieces, breaking the bijection with pairs $(\lambda,\lambda')$ established next. It is straightforward to check that $\sim$, as defined with two independent permutations, is indeed an equivalence relation. We denote by $[(I_{1,p+1},\ldots,I_{k,p+k})]$ its equivalence classes.

Given $(\lambda,\lambda')\vdash(p,q)$ with $\lambda=(\lambda_1\geq\cdots\geq\lambda_{r_1})$ and $\lambda'=(\lambda'_1\geq\cdots\geq\lambda'_{r_2})$, set $k:=\max\{r_1,r_2\}$, padding $\lambda$ (resp. $\lambda'$) with zero parts $\lambda_{r_1+1}=\cdots=\lambda_k=0$ (resp. $\lambda'_{r_2+1}=\cdots=\lambda'_k=0$) if needed, and consider the admissible partition $(L_{1,p+1},\ldots,L_{k,p+k})$ defined by
\begin{equation}\label{eq:defL}
 L_{j}:=\Big\{1+\textstyle\sum_{i=1}^{j-1} \lambda_{i},\ldots,\sum_{i=1}^{j}\lambda_{i}\Big\}\subset[p], \qquad L_{p+j}:=\Big\{p+1+\textstyle\sum_{i=1}^{j-1} \lambda'_{i},\ldots,p+\sum_{i=1}^{j}\lambda'_{i}\Big\}\subset\{p+1,\ldots,n\}
\end{equation}
for all $j\in[k]$ (a zero part simply produces an empty, i.e.\ $\phi$-type, piece), and $L_{j,p+j}:=L_j\cup L_{p+j}$. By construction $|L_j|=\lambda_j$ and $|L_{p+j}|=\lambda'_j$ for all $j\in[k]$, so, by definition of $\sim$, $(L_{1,p+1},\ldots,L_{k,p+k})$ is the unique admissible partition of its equivalence class whose two size-sequences are exactly $\lambda$ and $\lambda'$. As a consequence, we deduce that there is a bijection between the equivalence classes of $\sim$ and the pairs of partitions $(\lambda,\lambda') \vdash (p,q)$ and we write
$$ [(\lambda,\lambda')]:=[(L_{1,p+1},\ldots,L_{k,p+k})].$$

\begin{lem}\label{lem:proof} Let $(\lambda,\lambda')$ be a partition of $(p,q)$. Then the cardinality of the equivalence class $[(\lambda,\lambda')]$ is $m_{\lambda}m_{\lambda'}$.  
\end{lem}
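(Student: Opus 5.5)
The plan is to reduce the count to two independent one-block counts and then invoke the classical count of set partitions of a given type. An admissible partition of shape $(k_1,k_2)$ is, by the definition above, the data of an ordered, min-increasing set partition $(I_1,\ldots,I_{k_1})$ of $[p]$ together with a completely independent such partition $(I_{p+1},\ldots,I_{p+k_2})$ of $\{p+1,\ldots,n\}$; the $\phi$-type pieces carry no information. Moreover, the min-increasing ordering makes each ordered family correspond to exactly one unordered set partition. So the set of admissible partitions is in canonical bijection with the product of two sets: the set partitions of $[p]$ and the set partitions of $\{p+1,\ldots,n\}$.

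Next, transport the relation $\sim$ through this bijection. Since $\sim$ is defined using two \emph{independent} permutations $\sigma_1,\sigma_2$, it compares the multiset of block sizes on the $[p]$ side and, separately, on the $\{p+1,\ldots,n\}$ side. Padding by empty pieces does not affect these multisets. Hence $\sim$ is the product of two relations, each being ``same type''. The class $[(\lambda,\lambda')]$ is therefore the product of two sets: $A_\lambda$, the set partitions of $[p]$ whose block sizes sorted decreasingly give $\lambda$, and $A_{\lambda'}$, the set partitions of a $q$-element set of type $\lambda'$. Consequently
\[
\big|[(\lambda,\lambda')]\big|=|A_\lambda|\cdot|A_{\lambda'}|.
\]

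It remains to show $|A_\lambda|=m_\lambda$, and likewise for $\lambda'$. Consider the map from distributions of $p$ distinct objects into $r_1$ ordered boxes, box $i$ receiving $\lambda_i$ objects, to $A_\lambda$ that forgets the box labels. There are $\binom{p}{\lambda_1,\ldots,\lambda_{r_1}}$ such distributions. The map is surjective, and its fibres are orbits of the group $\prod_j \mathfrak{S}_{s_j}$ permuting boxes of equal size. This action is free because the boxes are nonempty, so distinct blocks are distinct sets. Each fibre thus has size $\prod_j s_j!$, giving $|A_\lambda|=m_\lambda$ by \eqref{eq:mlambda2}; the same argument with \eqref{eq:mlambdaprime} gives $|A_{\lambda'}|=m_{\lambda'}$.

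The main obstacle is the first reduction: one must check carefully that the min-increasing convention gives a genuine bijection with unordered set partitions, and that $\sim$ really decouples the two sides. The decoupling is exactly what the remark about using two independent permutations guarantees. The freeness of the action in the last step relies on all parts being positive, which holds since $\lambda_{r_1},\lambda'_{r_2}\geq 1$ before padding.
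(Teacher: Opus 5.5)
Your proof is correct and follows the paper's route: the class $[(\lambda,\lambda')]$ factors as an independent choice of a min-increasing set partition of $[p]$ of type $\lambda$ and one of $\{p+1,\ldots,n\}$ of type $\lambda'$. The only difference is that you derive the count $m_\lambda$ yourself, via the free action of $\prod_j\mathfrak{S}_{s_j}$ on labelled distributions, whereas the paper simply cites Lemma 4.2 of \cite{BuKa16}. Like the paper, you tacitly assume $r_1,r_2\leq d$: admissible shapes require $k_1\leq\min(d,p)$ and $k_2\leq\min(d,q)$, so your bijection with pairs of set partitions only reaches those with at most $d$ blocks on each side, but this is the only case in which the lemma is used.
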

\begin{proof}
By construction \eqref{eq:defL} and the definition of $\sim$, an admissible partition $(I_{1,p+1},\ldots,I_{k,p+k})$ belongs to $[(\lambda,\lambda')]$ if and only if, up to reindexing by some $\sigma_1\in\mathfrak{S}_k$, the size-sequence $(|I_1|,\ldots,|I_k|)$ equals $\lambda$ and, \emph{independently}, up to reindexing by some $\sigma_2\in\mathfrak{S}_k$, the size-sequence $(|I_{p+1}|,\ldots,|I_{p+k}|)$ equals $\lambda'$. Since $I_1,\ldots,I_k$ is an ordered, min-increasing set partition of $[p]$ and $I_{p+1},\ldots,I_{p+k}$ is an ordered, min-increasing set partition of $\{p+1,\ldots,n\}$, and since these two data are not linked by any further condition, an element of $[(\lambda,\lambda')]$ is exactly the datum of a pair consisting of an ordered, min-increasing set partition of $[p]$ with block sizes $\lambda$ and an \emph{independently chosen} ordered, min-increasing set partition of $\{p+1,\ldots,n\}$ with block sizes $\lambda'$. By \cite[\S 4]{BuKa16}, Lemma 4.2, the number of such set partitions of $[p]$ with block sizes $\lambda$ is $m_{\lambda}$, and likewise the number of such set partitions of $\{p+1,\ldots,n\}$ with block sizes $\lambda'$ is $m_{\lambda'}$. As the two choices are independent, the cardinality of $[(\lambda,\lambda')]$ is the product $m_{\lambda}m_{\lambda'}$.
\end{proof}

\begin{prop}\label{prop:proof}  Let $(\lambda,\lambda')$ be a partition of $(p,q)$. Then, for any admissible partition $\mathcal{Q}$ such that  $[(\lambda,\lambda')]=[\mathcal{Q}]$, we have \emph{exactly} (with no sign ambiguity)
$$R_{\mathcal{Q}}= \Res\left( 
F_{(\lambda,\lambda')}^{\{1\}}, F_{(\lambda,\lambda')}^{\{1,2\}}, \ldots, F_{(\lambda,\lambda')}^{\{1,2,\ldots,r_1\}},F_{(\lambda,\lambda')}^{\{p+1\}}, F_{(\lambda,\lambda')}^{\{p+1,p+2\}}, \ldots, F_{(\lambda,\lambda')}^{\{p+1,p+2,\ldots,p+r_2\}}
\right).
$$
\end{prop}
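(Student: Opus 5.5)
Let $\mathcal{Q}=(I_{1,p+1},\ldots,I_{k,p+k})$ be an admissible partition in the class $[(\lambda,\lambda')]$, of shape $(r_1,r_2)$. By the definition of $\sim$ there exist $\sigma_1\in\mathfrak{S}_{r_1}$ and $\sigma_2\in\mathfrak{S}_{r_2}$ with $|I_i|=\lambda_{\sigma_1(i)}$ and $|I_{p+i}|=\lambda'_{\sigma_2(i)}$. The idea is to compare $\rho_{\mathcal{Q}}$ with $\rho_{(\lambda,\lambda')}$ through a permutation of the variables. Only permutations lying in $\mathfrak{S}_{\{1,\ldots,p\}}\times\mathfrak{S}_{\{p+1,\ldots,n\}}$ will be used, so that the equivariance of the system applies.

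\textbf{Step 1: the permutation.} Choose $\tau=(\tau_1,\tau_2)$ with $\tau_1\in\mathfrak{S}_{\{1,\ldots,p\}}$ and $\tau_2\in\mathfrak{S}_{\{p+1,\ldots,n\}}$, such that $\tau$ sends each block $I_i$ onto $L_{\sigma_1(i)}$ and each block $I_{p+i}$ onto $L_{p+\sigma_2(i)}$. Such a $\tau$ exists because the block sizes match.

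By the product analogue of \eqref{eq:perminvariance}, we have $\tau(F^{J})=F^{\tau(J)}$ for every $J$ contained in $[p]$ or in $\{p+1,\ldots,n\}$. This analogue follows from the equivariance hypothesis exactly as \eqref{eq:perminvariance} follows from \eqref{eq:globsym}. Then $\rho_{\mathcal{Q}}=\pi\circ\rho_{(\lambda,\lambda')}\circ\tau$, where $\pi$ is the relabelling of the $y$-variables $y_{\sigma_1(i)}\mapsto x_i$ and $y'_{\sigma_2(i)}\mapsto x_{p+i}$.

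Applying this to $F^{\{j_1,\ldots,j_i\}}$, with representatives $j_s\in I_s$, gives
$$F_{\mathcal{Q}}^{\{1,\ldots,i\}}=\pi\big(F_{(\lambda,\lambda')}^{\{\sigma_1(1),\ldots,\sigma_1(i)\}}\big),$$
and similarly on the second side.

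\textbf{Step 2: reducing to $\sigma_1=\sigma_2=\mathrm{id}$.} This is the main obstacle, since the sets $\{\sigma_1(1),\ldots,\sigma_1(i)\}$ are not initial segments in general. I would not try to reorder naively. Instead I would rerun the splitting procedure: the factor $R_{\mathcal{Q}}$ was produced in the order given by the minima. Two cases arise.

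\emph{Equal sizes.} If two consecutive blocks have equal size, the swapping transposition lies in the stabiliser. Composing $\tau$ with it lets us assume that $\sigma_1$ preserves the order among blocks of equal size.

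\emph{Different sizes.} Here I would use the key fact from \cite[\S4]{BuKa16}, proof of Proposition 4.3: for the $\mathfrak{S}_m$-equivariant system $F_\lambda$, the resultant
$$\Res\big(F_\lambda^{\{1\}},F_\lambda^{\{1,2\}},\ldots,F_\lambda^{\{1,\ldots,l\}}\big)$$
is unchanged when the initial segments are replaced by $\{\sigma(1),\ldots,\sigma(i)\}$ for any $\sigma$. This is proved there via the elementary transformations
$$F^{J\cup\{a\}}-F^{J\cup\{b\}}=(x_a-x_b)F^{J\cup\{a,b\}},$$
which show that the flag of ideals $(F^{\{1\}},\ldots,F^{\{1..i\}})$ is a triangular, unimodular rewriting of $(F^{\{\sigma(1)\}},\ldots,F^{\{\sigma(1..i)\}})$.

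That argument involves only the variables of one block, so it applies verbatim to each side separately. The two sides interact only through the common variables, and the elementary-transformation property of the resultant still holds with the other side's polynomials present.

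\textbf{Step 3: no sign ambiguity.} First, $\pi$ relabels variables only within each side, and it does so according to $\sigma_1$ and $\sigma_2$ after the Step 2 reduction. Second, an elementary transformation contributes no sign, and the linear change of variables contributes $\det(\pi)^{\prod\deg}=(\pm1)^{\prod\deg}$.

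It therefore remains to check that, for any two block sizes $\lambda_i\neq\lambda_j$, the swap $\sigma_1$ needed is compensated. If Step 2 is carried out as an identity of resultants after the change of variables $\pi^{-1}$, the determinant sign is cancelled by the sign of reordering the polynomials. This matches the "exactly" claimed in the statement, and I would verify it by tracking that both signs equal $\mathcal{E}(\sigma_1)^{D}$, where $D$ is the product of the degrees.
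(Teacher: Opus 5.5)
Your Steps 1 and 2 are correct and use the same two ingredients as the paper, in the opposite order. The paper first reorders the flag by elementary transformations; its Step 1 is exactly your adjacent-swap identity $F^{J\cup\{a\}}-F^{J\cup\{b\}}=(x_a-x_b)F^{J\cup\{a,b\}}$, so neither the appeal to \cite{BuKa16} nor the separate ``equal sizes'' reduction is needed. Only then does it choose $\tau\in\mathfrak{S}_{\{1,\ldots,p\}}\times\mathfrak{S}_{\{p+1,\ldots,n\}}$ to match the specializations. Your factorization $\rho_{\mathcal Q}=\pi\circ\rho_{(\lambda,\lambda')}\circ\tau$ is right. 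Combined with the linear change of variables and your Step 2, it yields
\[
R_{\mathcal Q}=\bigl(\mathcal{E}(\sigma_1)\mathcal{E}(\sigma_2)\bigr)^{D}\,\Res\bigl(F_{(\lambda,\lambda')}^{\{1\}},\ldots,F_{(\lambda,\lambda')}^{\{p+1,\ldots,p+r_2\}}\bigr),\qquad D=\prod_{j=1}^{r_1}(d-j+1)\prod_{j=1}^{r_2}(d-j+1).
\]

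The gap is Step 3: the compensating sign you invoke does not exist. Your Step 2 uses only elementary transformations, and no polynomial ever moves to a different slot: the $i$-th entry of each block has degree $d-i+1$ throughout. Moreover, the chains $\bigl(F^{\{\sigma_1(1),\ldots,\sigma_1(i)\}}_{(\lambda,\lambda')}\bigr)_i$ and $\bigl(F^{\{1,\ldots,i\}}_{(\lambda,\lambda')}\bigr)_i$ are not permutations of one another. So the Permutation-of-polynomials property never enters, and the sign tracking you propose would find only the single factor $(\mathcal{E}(\sigma_1)\mathcal{E}(\sigma_2))^{D}$.

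The missing idea is parity. If $\mathcal{E}(\sigma_1)\mathcal{E}(\sigma_2)=-1$, then some $\sigma_s$ is nontrivial, so $r_1\geq 2$ or $r_2\geq 2$. Hence $d(d-1)$ divides $D$ and $D$ is even, so the factor is always $+1$. Without this observation your argument proves the identity only up to sign, whereas the content of the proposition is precisely that no sign occurs.

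For the record, the paper's proof never writes down $\pi$. After relabelling, it asserts $\rho_{\mathcal Q}\circ\tau=\rho_{(\lambda,\lambda')}$ by identifying ``the new variable attached to the $i$-th box'' on both sides, which tacitly performs the same renaming of variables. Your explicit $\pi$ is the more careful bookkeeping, and the same parity remark is what justifies the paper's sign-free conclusion as well.
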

\begin{proof}
Write $\mathcal{Q}=(I_{1,p+1},\ldots,I_{k,p+k})$ and, as before, $I_i:=I_{i,p+i}\cap[p]$, $I_{p+i}:=I_{i,p+i}\cap\{p+1,\ldots,n\}$. Since $[\mathcal{Q}]=[(\lambda,\lambda')]$, there are $\sigma_1,\sigma_2\in\mathfrak{S}_k$ such that $|I_{\sigma_1(i)}|=\lambda_i$ for all $i\in[k]$ and, independently, $|I_{p+\sigma_2(i)}|=\lambda'_i$ for all $i\in[k]$.

\emph{Step 1: relabelling the boxes of $\mathcal{Q}$ does not change $R_{\mathcal{Q}}$.} We first check that $R_{\mathcal{Q}}$ does not depend on the order in which the (fixed) sets $I_1,\ldots,I_k$, resp. $I_{p+1},\ldots,I_{p+k}$, are labelled. Since transpositions of consecutive labels generate $\mathfrak{S}_k$, it suffices to treat the case where two consecutive labels $r,r+1$ are swapped, and, by symmetry, to treat the first block only. Relabelling $r\leftrightarrow r+1$ changes the underlying \emph{set} $\{1,\ldots,j\}$ used to build the $j$-th entry of the chain only when $j=r$ (for $j<r$ the labels involved are untouched, and for $j\geq r+1$ the set $\{1,\ldots,j\}$ is the same whether $r$ or $r+1$ is named first); the new $r$-th entry is $F_{\mathcal{Q}}^{\{1,\ldots,r-1,r+1\}}$ in place of $F_{\mathcal{Q}}^{\{1,\ldots,r-1,r\}}$. By the recursive definition of divided differences (\S\ref{subsec:divdiff}),
$$F_{\mathcal{Q}}^{\{1,\ldots,r-1,r\}}-F_{\mathcal{Q}}^{\{1,\ldots,r-1,r+1\}}=(x_{r}-x_{r+1})\,F_{\mathcal{Q}}^{\{1,\ldots,r-1,r,r+1\}},$$
so the new $r$-th entry equals the old one minus a multiple, of the correct matching degree, of the $(r+1)$-th entry of the chain -- which is itself unchanged. By the ``Elementary transformations'' property of the resultant (\S\ref{subsec:resultant}), $R_{\mathcal{Q}}$ is therefore \emph{exactly} unchanged, with no sign ambiguity, by this relabelling, and hence, by induction, by any permutation of the labels $1,\ldots,k$.

\emph{Step 2: comparing $\mathcal{Q}$ to the canonical representative.} By Step 1 we may relabel the boxes of block 1 of $\mathcal{Q}$ by $\sigma_1$ and, independently, the boxes of block 2 by $\sigma_2$, without changing $R_{\mathcal{Q}}$, so we may assume $|I_i|=\lambda_i$ and $|I_{p+i}|=\lambda'_i$ for all $i\in[k]$, i.e.\ $I_i$ and $L_i$ (resp. $I_{p+i}$ and $L_{p+i}$, for the canonical representative $(L_{1,p+1},\ldots,L_{k,p+k})$ of \eqref{eq:defL}) have the same size, though the sets themselves may still differ. Choose $\tau\in\mathfrak{S}_{\{1,\ldots,p\}}\times\mathfrak{S}_{\{p+1,\ldots,n\}}$ mapping $L_i$ onto $I_i$ and $L_{p+i}$ onto $I_{p+i}$ for every $i\in[k]$; such a $\tau$ exists precisely because the two families of sets have matching sizes on each side. We claim that in fact $R_{\mathcal{Q}}=R_{(\lambda,\lambda')}$ \emph{exactly}, with no sign at all.

Indeed, the two specialization maps $\rho_{\mathcal{Q}}\circ\tau$ and $\rho_{(\lambda,\lambda')}$ from $R[x_1,\ldots,x_n]$ to $R[x_1,\ldots,x_{r_1},x_{p+1},\ldots,x_{p+r_2}]$ coincide: for $j\in[n]$ with, say, $j\in L_i$ (the case $j\in L_{p+i}$ is symmetric), $\tau(j)\in\tau(L_i)=I_i$ by construction of $\tau$, so $\rho_{\mathcal{Q}}(\tau(x_j))$ and $\rho_{(\lambda,\lambda')}(x_j)$ both equal the new variable attached to the $i$-th box; as this holds for every $j\in[n]$, $\rho_{\mathcal{Q}}\circ\tau=\rho_{(\lambda,\lambda')}$ as ring homomorphisms. Consequently, for every $1\leq j\leq k$, choosing representatives $l_1,\ldots,l_j$ of $L_1,\ldots,L_j$,
$$F_{(\lambda,\lambda')}^{\{1,\ldots,j\}}=\rho_{(\lambda,\lambda')}\big(F^{\{l_1,\ldots,l_j\}}\big)=\rho_{\mathcal{Q}}\Big(\tau\big(F^{\{l_1,\ldots,l_j\}}\big)\Big)=\rho_{\mathcal{Q}}\big(F^{\{\tau(l_1),\ldots,\tau(l_j)\}}\big)=F_{\mathcal{Q}}^{\{1,\ldots,j\}},$$
using \eqref{eq:perminvariance} for the middle equality, and the well-definedness of these polynomials with respect to the choice of representatives (\S\ref{subsec:divdiff+part}) for the last one, since $\tau(l_1),\ldots,\tau(l_j)\in I_1,\ldots,I_j$ are themselves valid representatives of $I_1,\ldots,I_j$. The same holds for the block-2 entries. So the two chains of polynomials defining $R_{(\lambda,\lambda')}$ and $R_{\mathcal{Q}}$ coincide \emph{entry by entry}; in particular $R_{\mathcal{Q}}=R_{(\lambda,\lambda')}$, exactly, with no need to invoke the ``Permutation of polynomials'' or ``Linear change of variables'' properties at all.
\end{proof}

The comparison of \eqref{eq:intermproof}, Lemma \ref{lem:proof} and Proposition \ref{prop:proof} (the latter now an \emph{exact} identity, with no sign ambiguity at all, by the strengthened proof above) shows that, up to a single global sign $\varepsilon\in\{\pm1\}$,
\begin{multline*}
\Res\left(F^{\{1\}},\ldots,F^{\{p\}},F^{\{p+1\}},\ldots,F^{\{n\}}\right)=\varepsilon\times
  \left(F^{\{1,\ldots,d+1\}}\right)^{\mu_1}\left(F^{\{p+1,\ldots,p+1+d\}}\right)^{\mu_2}\\ \times\prod_{\substack{(\lambda,\lambda')\vdash (p,q)\\ r_1\leq d, \ r_2\leq d}}
\Res\left( F_{(\lambda,\lambda')}^{\{1\}}, F_{(\lambda,\lambda')}^{\{1,2\}}, \ldots, F_{(\lambda,\lambda')}^{\{1,2,\ldots,r_1\}},F_{(\lambda,\lambda')}^{\{p+1\}}, F_{(\lambda,\lambda')}^{\{p+1,p+2\}}, \ldots, F_{(\lambda,\lambda')}^{\{p+1,p+2,\ldots,p+r_2\}} 
\right)^{m_{\lambda}m_{\lambda'}},
\end{multline*}
for some integers $\mu_1,\mu_2\geq0$ with $\mu_1=0$ if $p\leq d$ and $\mu_2=0$ if $q\leq d$ (this is exactly \eqref{eq:intermproof}, once the admissible partitions have been grouped into equivalence classes by Lemma \ref{lem:proof} and Proposition \ref{prop:proof}; since the latter no longer contributes any sign, the single global sign $\varepsilon$ appearing here comes \emph{solely} from the splitting process \eqref{eq:rho2} \eqref{eq:intermproof} itself see Remark \ref{rem:sign}).

\begin{rem}\label{rem:noteq}
Contrary to what one might expect from the symmetry of the statement, $F^{\{1,\ldots,d+1\}}$ and $F^{\{p+1,\ldots,p+1+d\}}$ (when both are defined, i.e.\ when $p>d$ and $q>d$) are in general \emph{two distinct elements} of $R$, not a single one: nothing in the $\mathfrak{S}_{\{1,\ldots,p\}}\times\mathfrak{S}_{\{p+1,\ldots,n\}}$-equivariance hypothesis relates a divided difference taken entirely inside the first block to one taken entirely inside the second, since these two blocks are never exchanged by an element of the group. (A short computation on an explicit example, e.g.\ $p=q=2$, $d=1$ -- see the discussion of that very case following \eqref{eq:rho3} above  confirms that they are unrelated in general.) Consequently the two factors $\left(F^{\{1,\ldots,d+1\}}\right)^{\mu_1}$ and $\left(F^{\{p+1,\ldots,p+1+d\}}\right)^{\mu_2}$ above must be kept separate, with their own exponents $\mu_1,\mu_2$, and cannot be merged into a single power of a common base.
\end{rem}

\begin{lem}\label{lem:universal-sign}
Fix $n,p,q,d$ as in Theorem \ref{thm:maintheorem}. The sign $\varepsilon\in\{\pm1\}$ appearing above does not depend on the coefficients of the equivariant system $F^{\{1\}},\ldots,F^{\{n\}}$: it is the same for every such system for which the right-hand side of Theorem \ref{thm:maintheorem} does not vanish identically. Consequently $\varepsilon$ is entirely determined, for each fixed $(n,p,q,d)$, by evaluating a single non-degenerate example.
\end{lem}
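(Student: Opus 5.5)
Everything can be done in the universal setting. Fix $(n,p,q,d)$ and consider the universal $\mathfrak{S}_{\{1,\ldots,p\}}\times\mathfrak{S}_{\{p+1,\ldots,n\}}$-equivariant system. Its coefficients are indeterminates indexed by the orbits of the group acting on pairs (polynomial index, monomial). Every equivariant system over any ring $R$ is a specialization of it. Let $A$ be the polynomial ring over $\ZZ$ in these orbit variables; $A$ is an integral domain. Call $L$ the left-hand side $\Res(F^{\{1\}},\ldots,F^{\{n\}})$ and $P$ the right-hand side of Theorem \ref{thm:maintheorem} without its sign, both computed universally in $A$.

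\textbf{Step 1: an identity in $A$.} The splitting argument in the proof is a chain of formal identities, and each one holds in the universal ring. These identities are the elementary transformations \eqref{eq:rho2}, multiplicativity, the specialization property and reorderings of the polynomials. Each reordering or permutation of variables contributes a factor $\pm1$. The sign of a given reordering depends only on the combinatorial bookkeeping, that is on the degrees and on which admissible partition is being treated. It does not depend on the coefficients. So each of these finitely many signs is a fixed integer $\pm1$, determined by $(n,p,q,d)$, and their product $\varepsilon$ satisfies $L=\varepsilon P$ in $A$. Proposition \ref{prop:proof} contributes no sign. I would still recheck the degree-$1$ branch \eqref{eq:rho3} and the leading factors $(F^{\{1,\ldots,d+1\}})^{\mu_1}$ and $(F^{\{p+1,\ldots,p+1+d\}})^{\mu_2}$: in both places the sign must come only from rearranging linear forms and constants, which is again coefficient-independent.

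\textbf{Step 2: specialization.} For any $R$ and any equivariant system, let $\theta:A\to R$ be the specialization map. The resultant, the divided differences and the maps $\rho_{(\lambda,\lambda')}$ all commute with $\theta$, so $\theta(L)=\varepsilon\,\theta(P)$ with the same $\varepsilon$. Suppose now that $\theta(P)\neq 0$ and that $R$ is a domain, or more generally that $2$ is not a zero divisor on $\theta(P)$. Then $\varepsilon$ is recovered as $\theta(L)/\theta(P)$. Because $A$ is a domain and $P\neq0$, the element $\varepsilon$ is also characterized intrinsically as the unique $e\in\{\pm1\}$ with $L=eP$. Evaluating a single non-degenerate example over $\ZZ$ or $\mathbb{Q}$ therefore determines $\varepsilon$.

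\textbf{Main obstacle.} The difficulty is Step 1: showing that the sign really is one global constant, rather than depending on data such as which admissible partitions give nonzero factors. I would handle this by carrying out the whole splitting over $A$ itself. Then no case distinction depends on coefficients: every case split is on $k$, $k'$ and the degrees, never on vanishing. Hence the accumulated sign is a product of signatures $\mathcal{E}(\sigma)^{\text{degree product}}$, and each of these is visibly determined by $(n,p,q,d)$.
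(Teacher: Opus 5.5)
Your proposal is correct and follows essentially the same route as the paper. Both arguments carry out the whole splitting as formal identities for the generic $\mathfrak{S}_{\{1,\ldots,p\}}\times\mathfrak{S}_{\{p+1,\ldots,n\}}$-equivariant system over its integral universal coefficient ring, so that $L=\varepsilon P$ holds there with one fixed sign, and then specialize. Your extra remarks are mild refinements of the paper's ``ratio'' formulation: $\varepsilon$ is the unique $e\in\{\pm1\}$ with $L=eP$ because $A$ is a domain of characteristic zero with $P\neq0$ (which follows, e.g., from the equivariant specialization $F^{\{i\}}=x_i^d$, giving $L=1$), and reading $\varepsilon$ off an example requires $2\theta(P)\neq0$.
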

\begin{proof}
Every step used to derive \eqref{eq:intermproof} from $\Res(F^{\{1\}},\ldots,F^{\{n\}})$  elementary transformations, multiplicativity, and the scalar-homogeneity used to extract a sign whenever a $(-1)$ is produced by an elementary transformation  is an identity that holds formally for the \emph{generic} system of $n$ homogeneous polynomials of degree $d$ satisfying the equivariance relations \eqref{eq:globsym} for $\mathfrak{S}_{\{1,\ldots,p\}}\times\mathfrak{S}_{\{p+1,\ldots,n\}}$, i.e.\ before specializing the independent coefficients of such a system to any particular numerical values. Both sides of the resulting identity are therefore elements of the (integral) polynomial ring freely generated by these independent coefficients, and the equality $(\text{LHS})=\varepsilon\cdot(\text{RHS})$ holds there for a single $\varepsilon\in\{\pm1\}$  not one varying with the point at which one evaluates. Hence, for any specific choice of coefficients at which the right-hand side is nonzero, $\varepsilon$ is recovered exactly as the ratio $\Res(F^{\{1\}},\ldots,F^{\{n\}})/(\text{RHS})$.
\end{proof}

\begin{rem}\label{rem:sign}
By Proposition \ref{prop:proof} (now exact, see its proof), the \emph{only} remaining source of sign ambiguity in Theorem \ref{thm:maintheorem} is the single global sign $\varepsilon=\varepsilon(n,p,q,d)\in\{\pm1\}$ of \eqref{eq:intermproof}, coming from the splitting process \eqref{eq:rho2} \eqref{eq:intermproof} alone  this is already a substantial reduction compared to a first look at the proof, which also seemed to require tracking a sign for every admissible partition individually (via Step 2 of the proof of Proposition \ref{prop:proof}) and summing $m_{\lambda}m_{\lambda'}$ such contributions per class: Proposition \ref{prop:proof} shows this second source simply does not exist. By Lemma \ref{lem:universal-sign}, $\varepsilon(n,p,q,d)$ is, for each $(n,p,q,d)$, rigorously computable from a single example. We have carried this out directly (via the Macaulay resultant formula, on explicit equivariant systems) for $(p,q,d)=(2,2,1)$, $(2,2,2)$, $(3,2,1)$, $(3,1,2)$, and for the $r=3$ cases $(n_1,n_2,n_3,d)=(2,2,2,1)$ and $(2,2,2,2)$ of Theorem \ref{thm:maintheorem-r} below (\S\ref{sec:general-r}), and in every one of them $\varepsilon=+1$. We conjecture that $\varepsilon(n,p,q,d)=+1$ always (and likewise for the $r$-block sign of Theorem \ref{thm:maintheorem-r}), so that Theorem \ref{thm:maintheorem} would in fact hold with \emph{no} sign ambiguity whatsoever; we have not, however, been able to prove this for every $(n,p,q,d)$, and leave it as an open question. We record, as a caution, that a first attempt at computing $\varepsilon$ in closed form  by directly counting, for each elementary transformation used in \eqref{eq:rho2}, the sign $(-1)^{d^{n-1}}$ it contributes when the resulting $(-1)$ is factored back out via scalar-homogeneity, and multiplying $(n-2)$ such contributions together  produces a candidate value that we could check, against the verified example $(p,q,d)=(3,2,1)$ above, to be \emph{incorrect}; this indicates that the bookkeeping is more delicate than this naive count (later levels of the recursive splitting \eqref{eq:step1} onward contribute further, partition-shape-dependent sign factors that must also be tracked), and we prefer to leave $\varepsilon$ as an open, numerically-verified-in-all-known-cases conjecture rather than present an unverified closed form.
\end{rem}

To determine the integers $\mu_1,\mu_2$, we compare degrees with respect to the coefficients of the $F^{\{i\}}$'s -- but, in view of Remark \ref{rem:noteq}, we must do so \emph{block by block} rather than in aggregate. By the ``Homogeneity'' property (\S\ref{subsec:resultant}), for every fixed $i\in[p]$ the resultant on the left-hand side is homogeneous of degree $d^{n-1}$ with respect to the coefficients of $F^{\{i\}}$ alone; scaling \emph{all} of $F^{\{1\}},\ldots,F^{\{p\}}$ simultaneously by the same scalar (equivalently, summing this identity over $i=1,\ldots,p$) shows that the resultant is homogeneous of degree $p\,d^{n-1}$ with respect to the coefficients of $F^{\{1\}},\ldots,F^{\{p\}}$ taken together.

We match this degree on the right-hand side. The factor $F^{\{1,\ldots,d+1\}}$ is a fixed divided difference built (linearly) from $F^{\{1\}},\ldots,F^{\{d+1\}}$ alone, hence is homogeneous of degree exactly $1$ in the coefficients of $F^{\{1\}},\ldots,F^{\{p\}}$ taken together; it contributes $\mu_1$ to the degree count, while $F^{\{p+1,\ldots,p+1+d\}}$, not involving these coefficients at all, contributes $0$. For a given $(\lambda,\lambda')\vdash(p,q)$ with $r_1\leq d$, $r_2\leq d$, each of the $r_1$ entries $F_{(\lambda,\lambda')}^{\{1\}},\ldots,F_{(\lambda,\lambda')}^{\{1,\ldots,r_1\}}$ is, for the same reason, homogeneous of degree exactly $1$ in the coefficients of $F^{\{1\}},\ldots,F^{\{p\}}$ together, while the $r_2$ entries coming from the second block do not involve them at all. Applying the Homogeneity property to the $(r_1+r_2)$-variable resultant $\Res\left(F_{(\lambda,\lambda')}^{\{1\}},\ldots,F_{(\lambda,\lambda')}^{\{p+1,\ldots,p+r_2\}}\right)$, its degree with respect to the coefficients of $F^{\{1\}},\ldots,F^{\{p\}}$ together is $\sum_{j=1}^{r_1}d_j$, where
$$d_j=\frac{d(d-1)\cdots (d-r_1+1)\,d(d-1)\cdots (d-r_2+1)}{d-j+1}.$$
Altogether,
$$p\,d^{n-1}=\mu_1+\sum_{\substack{(\lambda,\lambda')\vdash(p,q)\\ r_1\leq d,\ r_2\leq d}} m_{\lambda}m_{\lambda'}\sum_{j=1}^{r_1}d_j. \quad(\star)$$

To solve $(\star)$ for $\mu_1$ we isolate the $\lambda'$-dependence using the classical identity, valid for all integers $q\geq1$, $d\geq1$,
$$\sum_{\lambda'\vdash q} m_{\lambda'}\,d(d-1)\cdots(d-l(\lambda')+1)=d^{\,q}. \quad(\dagger)$$
which relates falling factorials to Stirling numbers of the second kind: since $\sum_{l(\lambda')=k}m_{\lambda'}=S(q,k)$ is the number of set partitions of a $q$-element set into $k$ unlabelled nonempty blocks, $(\dagger)$ is exactly the classical identity $d^{q}=\sum_{k=0}^{q}S(q,k)\,d(d-1)\cdots(d-k+1)$, obtained by sorting the $d^{q}$ maps from a $q$-set to a $d$-set according to the (unordered) partition of the source into fibers, together with the choice of the $k\leq d$ distinct elements of the target that are actually hit. (Terms with $l(\lambda')>d$ vanish automatically in $(\dagger)$, since the falling factorial then contains the factor $0$; so the restriction $r_2\leq d$ in $(\star)$ may be dropped for free when summing over $\lambda'$.) Summing $(\star)$ over $\lambda'\vdash q$ first, for $\lambda$ fixed, the double sum factors via $(\dagger)$:
$$p\,d^{n-1}=\mu_1+d^{\,q}\sum_{\substack{\lambda\vdash p\\ r_1\leq d}} m_{\lambda}\sum_{j=1}^{r_1}\frac{d(d-1)\cdots(d-r_1+1)}{d-j+1}=\mu_1+d^{\,q}\left(p\,d^{p-1}-m_{0}(p,d)\right),$$
the last equality being precisely the definition of $m_0(p,d)$ (Theorem \ref{thm:maintheorem1}, with $n$ replaced by $p$). Since $p\,d^{n-1}=p\,d^{p+q-1}=d^{\,q}\cdot p\,d^{p-1}$, the terms $d^{\,q}\,p\,d^{p-1}$ cancel on both sides, leaving the closed form
$$\mu_1=d^{\,q}\,m_0(p,d).$$
By the symmetric argument (exchanging the roles of the two blocks),
$$\mu_2=d^{\,p}\,m_0(q,d).$$
In particular, since the same degree count applied to Theorem \ref{thm:maintheorem1} itself shows $m_0(m,d)=0$ whenever $m\leq d$, we get $\mu_1=0$ when $p\leq d$ and $\mu_2=0$ when $q\leq d$, consistently with the case $p\leq d,\,q\leq d$ recalled above and with the statement of Theorem \ref{thm:maintheorem}.

\section{Discriminant of a homogeneous polynomial invariant under a direct product of symmetric groups}\label{sec:discriminant}

The discriminant of a homogeneous polynomial is a rather complicated object which is known to be irreducible in the universal setting over the integers (see for instance \cite[\S 4]{BuJo12}). The purpose of this section is to prove  that when the homogeneous polynomial is invariant under the action of a direct product of symmetric groups, then its discriminant can be decomposed into the product of several resultants that are in principle easier to compute (see Theorem \ref{thm:discmaintheorem}). We will obtain this result by specialization of the formula given in Theorem \ref{thm:maintheorem}. 

\medskip

Fix a positive integer $n\geq 2$. For any integer $p$ we will denote by $e_{p}(x_{1},\ldots,x_{n})$ the $p^{\mathrm{th}}$ elementary symmetric polynomial in the variables $x_{1},\ldots,x_{n}$. They satisfy the equality
$$\sum_{p\geq 0} e_{p}(x )t^{p}=\prod_{i=1}^{n}(1+x_{i}t)$$
(observe that $e_{0}(x)=1$ and that $e_{p}(x)=0$ for all $p>n$). For any partition $\lambda=(\lambda_{1}, \ldots , \lambda_{k})$ we also define the polynomial
$$e_{\lambda}(x):=e_{\lambda_{1}}( x) e_{\lambda_{2}}( x) \cdots e_{\lambda_{k}}( x) \in \ZZ[x_{1},\ldots,x_{n}].$$
Given a positive integer $d$, it is well known that the set 
\begin{equation}\label{elambda}
 \{ e_{\lambda}(x) \ : \ \lambda=(\lambda_{1},\ldots,\lambda_{k}) \vdash d \textrm{ such that }  n \geq \lambda_{1} \geq \lambda_{2} \geq \cdots \geq \lambda_{k} \}
\end{equation}
is a basis (over $\ZZ$) of the homogeneous symmetric polynomials of degree $d$ in $n$ variables. In other words, any homogeneous symmetric polynomial of degree $d$ with coefficients in a commutative ring is obtained as specialization of the generic homogeneous symmetric polynomial of degree $d$
\begin{equation}\label{eq:F}
 F(x_{1},\ldots,x_{n}):=\sum_{\substack{\lambda \vdash d \\ n\geq \lambda_1}} c_{\lambda}e_{\lambda}(x) \in \ZZ[c_{\lambda} : \lambda \vdash d,\ n\geq \lambda_1][x_{1},\ldots,x_{n}].
\end{equation}
We will denote by $\UU$ its universal ring of coefficients $\ZZ[c_{\lambda} : \lambda \vdash d,\ n\geq \lambda_1]$. In addition, 
for all $i\in \{1,\ldots,n\}$, we will denote the partial derivatives of $F$ by
$$F^{\{i\}}(x_{1},\ldots,x_{n}):=\frac{\partial F}{\partial x_{i}} (x_{1},\ldots,x_{n}) \in \UU[x_{1},\ldots,x_{n}]_{d-1}.$$ 
Finally, we recall that the discriminant of $F$ is defined by the equality (see \S \ref{subsec:disc})
\begin{equation}\label{eq:discressym}
d^{a(n,d)}\Disc(F)=\Res\left(  F^{\{1\}},F^{\{2\}},\ldots,F^{\{n\}} \right) \in \UU
\end{equation}
and that it is homogeneous of degree $n(d-1)^{n-1}$ in $\UU$.

\begin{lem}\label{lem:equivF}
The partial derivatives $F^{\{1\}},F^{\{2\}},\ldots,F^{\{n\}}$ of a symmetric (i.e.\ $\mathfrak{S}_{n}$-invariant) homogeneous polynomial $F(x_{1},\ldots,x_{n})$ form a $\mathfrak{S}_{n}$-equivariant polynomial system. In particular, they form a $G$-equivariant polynomial system for \emph{any} subgroup $G\leq\mathfrak{S}_{n}$, e.g.\ $G=\mathfrak{S}_{\{1,\ldots,p\}}\times\mathfrak{S}_{\{p+1,\ldots,n\}}$.
\end{lem}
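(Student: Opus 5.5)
The plan is to prove the chain rule identity
$$\frac{\partial F}{\partial x_i}\big(x_{\sigma(1)},\ldots,x_{\sigma(n)}\big)=\frac{\partial F}{\partial x_{\sigma(i)}}(x_1,\ldots,x_n),$$
which is exactly \eqref{eq:globsym}. The restriction to a subgroup is then automatic. All the work sits over the universal ring $\UU$, and the identity can be checked coefficientwise, so formal partial derivatives suffice and no characteristic issues arise.

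Fix $\sigma\in\mathfrak{S}_n$ and write $\sigma^*$ for the ring endomorphism of $\UU[x_1,\ldots,x_n]$ sending $x_j\mapsto x_{\sigma(j)}$. Thus $\sigma(G)=\sigma^*(G)$ in the notation of \eqref{eq:globsym}. Symmetry of $F$ means $\sigma^*(F)=F$.

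Differentiate $\sigma^*(F)=F$ with respect to $x_{\sigma(i)}$. For a monomial $x^\alpha$ we have $\sigma^*(x^\alpha)=\prod_j x_{\sigma(j)}^{\alpha_j}$, in which the variable $x_{\sigma(i)}$ appears exactly with exponent $\alpha_i$. Hence, by a direct check on monomials extended linearly,
$$\frac{\partial}{\partial x_{\sigma(i)}}\big(\sigma^*(G)\big)=\sigma^*\Big(\frac{\partial G}{\partial x_i}\Big)$$
for every polynomial $G$. Applying this with $G=F$ and using $\sigma^*(F)=F$ gives
$$\sigma^*(F^{\{i\}})=\frac{\partial F}{\partial x_{\sigma(i)}}=F^{\{\sigma(i)\}},$$
which is the required equivariance.

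For the second statement, note that the equivariance condition for a subgroup $G\leq\mathfrak{S}_n$ is just \eqref{eq:globsym} restricted to $\sigma\in G$. It therefore follows immediately.

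In particular, take $G=\mathfrak{S}_{\{1,\ldots,p\}}\times\mathfrak{S}_{\{p+1,\ldots,n\}}$ and $\sigma=(\sigma_1,\sigma_2)$. For $i\leq p$ we get $\sigma(i)=\sigma_1(i)\in[p]$, and for $i>p$ we get $\sigma(i)=\sigma_2(i)$. So each block of derivatives is permuted within itself, matching the setting of Theorem \ref{thm:maintheorem}.

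The only step needing care is the monomial computation of the chain rule, in particular getting the direction of $\sigma$ versus $\sigma^{-1}$ right. I will do this by tracking which variable carries exponent $\alpha_i$ after substitution.
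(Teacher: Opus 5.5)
Your proof is correct, but it takes a different route from the paper's.

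The paper writes $F$ as a polynomial in the elementary symmetric polynomials and applies the chain rule to get $F^{\{i\}}=\sum_k \frac{\partial e_k}{\partial x_i}S_k$ with each $S_k$ symmetric. It then checks, via $\frac{\partial e_k}{\partial x_i}=e_{k-1}(x_{\widehat{\imath}})$, that these coefficients transform as $i\mapsto\sigma(i)$ under the permutation.

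You instead prove the commutation rule $\frac{\partial}{\partial x_{\sigma(i)}}\circ\sigma^*=\sigma^*\circ\frac{\partial}{\partial x_i}$ directly on monomials, and then apply it to $\sigma^*(F)=F$. Your exponent bookkeeping is right, including the case $\alpha_i=0$, and the direction of $\sigma$ matches \eqref{eq:globsym}.

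What your route buys:
\begin{itemize}
\item It is shorter and works with formal derivatives over any commutative ring.
\item It does not need the fundamental theorem of symmetric polynomials, nor homogeneity.
\item It proves more: the gradient of any $H$-invariant polynomial is $H$-equivariant, for every subgroup $H\leq\mathfrak{S}_n$. So it would also justify applying Theorem~\ref{thm:maintheorem} or Theorem~\ref{thm:maintheorem-r} to polynomials invariant only under a product of symmetric groups. In that case Remark~\ref{rem:eqbase-disc}, which needs full $\mathfrak{S}_n$-equivariance, would no longer merge the prefactors into a single base.
\end{itemize}

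What the paper's route buys is the explicit shape $F^{\{i\}}=\sum_k e_{k-1}(x_{\widehat{\imath}})\,S_k$ of the derivatives. That form is handy for writing down the $F^{\{i\}}$ and computing their divided differences, as in the worked $d=3$ example.

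One cosmetic point: you use the letter $G$ both for an arbitrary polynomial and for the subgroup; rename one of them.
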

\begin{proof}
We must show that $\sigma(F^{\{i\}})=F^{\{\sigma(i)\}}$ for every $\sigma\in\mathfrak{S}_{n}$ and every $i\in[n]$; the statement for a subgroup $G\leq\mathfrak{S}_{n}$ then follows immediately by restriction. Since $F$ is a polynomial in the elementary symmetric polynomials, the chain rule formula for the derivation of composed functions shows that
there exist $\min\{d,n\}$ homogeneous symmetric polynomials $S_k(x_{1},\ldots,x_{n})$ such that for all $i=1,\ldots,n$ 
\begin{equation}\label{eq:diffF}
	F^{\{i\}}=\frac{\partial F}{\partial x_i} = \sum_{k=1}^{\min\{d,n\}} \frac{\partial e_{k}}{\partial x_{i}} S_{k}(x_{1},\dots,x_{n}).
\end{equation}
Moreover, for any pair of integers $i,j$ we have
\begin{equation}\label{eq:diffej}
 \frac{\partial e_{j}}{\partial x_i}=\sum_{r=0}^{j-1} (-1)^{r}x_{i}^{r}e_{j-1-r}=e_{j-1}(x_{\widehat{\imath}}),
\end{equation}
where $x_{\widehat{\imath}}$ denotes the $(n-1)$-tuple of variables obtained from $(x_1,\ldots,x_n)$ by omitting $x_i$. Fix $\sigma\in\mathfrak{S}_n$ and $i\in[n]$. As $j$ ranges over $[n]\setminus\{i\}$, $\sigma(j)$ ranges over $[n]\setminus\{\sigma(i)\}$, so evaluating \eqref{eq:diffej} at the permuted point $(x_{\sigma(1)},\ldots,x_{\sigma(n)})$ gives
$$\left(\frac{\partial e_{k}}{\partial x_{i}}\right)(x_{\sigma(1)},\ldots,x_{\sigma(n)})=e_{k-1}\big(x_{\widehat{\sigma(i)}}\big)=\frac{\partial e_{k}}{\partial x_{\sigma(i)}}(x_1,\ldots,x_n).$$
Since each $S_k$ is symmetric, $S_k(x_{\sigma(1)},\ldots,x_{\sigma(n)})=S_k(x_1,\ldots,x_n)$ as well. Combining both facts with \eqref{eq:diffF} evaluated at $(x_{\sigma(1)},\ldots,x_{\sigma(n)})$, we deduce
$$\sigma\big(F^{\{i\}}\big)=F^{\{i\}}(x_{\sigma(1)},\ldots,x_{\sigma(n)})=\sum_{k=1}^{\min\{d,n\}}\frac{\partial e_{k}}{\partial x_{\sigma(i)}}(x_1,\ldots,x_n)\,S_{k}(x_1,\ldots,x_n)=F^{\{\sigma(i)\}},$$
as claimed.
\end{proof}

As a consequence of this lemma, Theorem \ref{thm:maintheorem} can be applied in order to decompose the resultant of the polynomials $F^{\{1\}},F^{\{2\}},\ldots,F^{\{n\}}$ and hence, by \eqref{eq:discressym}, to decompose the discriminant of the $\mathfrak{S}_{\{1,\ldots,p\}}\times\mathfrak{S}_{\{p+1,\ldots,n\}} $-invariant polynomial $F$. Note that this system has degree $d-1$ (not $d$), so that Theorem \ref{thm:maintheorem} applies with its own degree parameter set to $d-1$; this also explains the hypothesis $d\geq 2$ in Theorem \ref{thm:discmaintheorem} below, needed for $d-1\geq 1$. We take again the notation of \S \ref{subsec:divdiff} and  \S \ref{subsec:mainthm2}.

\begin{rem}\label{rem:eqbase-disc}
Unlike in the general setting of Theorem \ref{thm:maintheorem} (Remark \ref{rem:noteq}), the two divided differences $F^{\{1,\ldots,d\}}$ and $F^{\{p+1,\ldots,p+d\}}$ \emph{do} coincide here, as elements of $\UU$, whenever both are defined: by Lemma \ref{lem:equivF}, $F^{\{1\}},\ldots,F^{\{n\}}$ is $\mathfrak{S}_{n}$-equivariant (not merely $\mathfrak{S}_{\{1,\ldots,p\}}\times\mathfrak{S}_{\{p+1,\ldots,n\}}$-equivariant), and if $\{1,\ldots,d\}$ and $\{p+1,\ldots,p+d\}$ are both subsets of $[n]$ which they are of size $d=(d-1)+1$, i.e.\ exactly one more than the (common) degree of the $F^{\{i\}}$'s  then, by the property recalled at the end of \S\ref{subsec:divdiff}, $P^{I}=P^{J}$ for any two such subsets $I,J\subset[n]$ of an $\mathfrak{S}_n$-equivariant system $P$, so $F^{\{1,\ldots,d\}}=F^{\{p+1,\ldots,p+d\}}$. (This is confirmed directly on the worked example below: $F^{\{1,2,3\}}=F^{\{2,3,4\}}=c_{(3)}$.) Note that one of the two index sets may fail to lie in $[n]$ -- $\{1,\ldots,d\}\subset[n]$ requires $d\leq n$, and $\{p+1,\ldots,p+d\}\subset[n]$ requires $d\leq q$  but by Theorem \ref{thm:maintheorem}, $\mu_1\neq0$ forces $d\leq p\leq n$ and $\mu_2\neq0$ forces $d\leq q$; so whichever base is actually needed below (i.e.\ has a nonzero exponent) is automatically well defined, and this is precisely what allows the two exponents of Theorem \ref{thm:maintheorem} to be merged into the single exponent $m_0$ below.
\end{rem}

\begin{thm}\label{thm:discmaintheorem} Assume that $n\geq 2$ and $d\geq 2$. With the above notation, up to sign (see Remark \ref{rem:sign}),
\begin{multline*}
d^{a(n,d)}\Disc\left(F\right)=
  \left(F^{\{1,\ldots,d\}}\right)^{m_0}\\ \times\prod_{\substack{(\lambda,\lambda')\vdash (p,q)\\ r_1\leq d-1,\ r_2\leq d-1}}
\Res\left( F_{(\lambda,\lambda')}^{\{1\}}, F_{(\lambda,\lambda')}^{\{1,2\}}, \ldots, F_{(\lambda,\lambda')}^{\{1,2,\ldots,r_1\}},F_{(\lambda,\lambda')}^{\{p+1\}}, F_{(\lambda,\lambda')}^{\{p+1,p+2\}}, \ldots, F_{(\lambda,\lambda')}^{\{p+1,p+2,\ldots,p+r_2\}} 
\right)^{m_{\lambda}m_{\lambda'}},
\end{multline*}
with the convention that the factor $\left(F^{\{1,\ldots,d\}}\right)^{m_0}$ is simply omitted, i.e.\ equal to $1$, when $p\leq d-1$ and $q\leq d-1$, and where
  $$m_{0}:=n(d-1)^{n-1}-\sum_{ \substack{(\lambda,\lambda')\vdash (p,q) \\ r_1 \leq d-1,\ r_2 \leq d-1}} m_{\lambda}m_{\lambda'} 
  \left(\sum_{j=1}^{r_1} d_j+ \sum_{j=1}^{r_2} d_j \right), \qquad d_j=\frac{(d-1)\cdots(d-r_1)(d-1)\cdots(d-r_2) }{(d-j)}.$$
  (By Remark \ref{rem:eqbase-disc}, $m_0=\mu_1+\mu_2$ where $\mu_1=(d-1)^{q}m_0(p,d-1)$, $\mu_2=(d-1)^{p}m_0(q,d-1)$ are the two exponents of Theorem \ref{thm:maintheorem}, specialized to degree $d-1$; in particular $m_0=0$ exactly when $p\leq d-1$ and $q\leq d-1$.)
\end{thm}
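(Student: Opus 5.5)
The plan is to specialize Theorem \ref{thm:maintheorem} to the system of partial derivatives. By Lemma \ref{lem:equivF}, $F^{\{1\}},\ldots,F^{\{n\}}$ is $\mathfrak{S}_n$-equivariant, and hence $\mathfrak{S}_{\{1,\ldots,p\}}\times\mathfrak{S}_{\{p+1,\ldots,n\}}$-equivariant. Its entries are homogeneous of common degree $d-1\geq 1$ in $\UU[x_1,\ldots,x_n]$. We therefore apply Theorem \ref{thm:maintheorem} over $R=\UU$ with degree parameter $d-1$. It yields, up to the global sign $\varepsilon$ of Remark \ref{rem:sign}, the identity
$$\Res(F^{\{1\}},\ldots,F^{\{n\}})=\bigl(F^{\{1,\ldots,d\}}\bigr)^{\mu_1}\bigl(F^{\{p+1,\ldots,p+d\}}\bigr)^{\mu_2}\prod_{(\lambda,\lambda')}\Res(\cdots)^{m_\lambda m_{\lambda'}},$$
with $\mu_1=(d-1)^q m_0(p,d-1)$ and $\mu_2=(d-1)^p m_0(q,d-1)$. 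The product runs over $(\lambda,\lambda')\vdash(p,q)$ with $r_1,r_2\leq d-1$, and its factors are exactly those in the statement. Substituting this into \eqref{eq:discressym} turns the left-hand side into $d^{a(n,d)}\Disc(F)$.

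Next we merge the two prefactors. If $\mu_1\neq 0$ then $p>d-1$, so $\{1,\ldots,d\}\subset[p]$. If $\mu_2\neq0$ then $q>d-1$, so $\{p+1,\ldots,p+d\}\subset\{p+1,\ldots,n\}$. When both exponents are nonzero, Remark \ref{rem:eqbase-disc} applies: the system is $\mathfrak{S}_n$-equivariant of degree $d-1$, so all divided differences on $d$ indices coincide, by the fact at the end of \S\ref{subsec:divdiff}, applicable since $d=(d-1)+1\leq n$. Hence $F^{\{1,\ldots,d\}}=F^{\{p+1,\ldots,p+d\}}$. When only one exponent is nonzero, we rewrite that base as $F^{\{1,\ldots,d\}}$ by the same fact; this is legitimate because $d\leq n$ in that case. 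So the prefactor is $(F^{\{1,\ldots,d\}})^{\mu_1+\mu_2}$. It is omitted exactly when $p\leq d-1$ and $q\leq d-1$, since $m_0(m,d-1)=0$ for $m\leq d-1$.

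It remains to check that $\mu_1+\mu_2$ equals the $m_0$ of the statement. We redo the block-wise degree count $(\star)$ from the proof of Theorem \ref{thm:maintheorem}, with $d$ replaced by $d-1$. In the coefficients of the first block, the left-hand side has degree $p(d-1)^{n-1}$, which gives
$$p(d-1)^{n-1}=\mu_1+\sum m_\lambda m_{\lambda'}\sum_{j=1}^{r_1}d_j.$$
The second block gives the analogous identity with $q$, $\mu_2$ and $\sum_{j=1}^{r_2}d_j$. Adding the two identities yields
$$n(d-1)^{n-1}=\mu_1+\mu_2+\sum m_\lambda m_{\lambda'}\Bigl(\sum_{j\leq r_1}d_j+\sum_{j\leq r_2}d_j\Bigr),$$
which is precisely $m_0=\mu_1+\mu_2$. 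Here the $d_j$ in the second-block sum must be read with $j$ indexing entries of the second chain, that is, dividing by $d-j$ over the $r_2$ falling factors.

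The main obstacle is interpretive rather than computational. The degree count is only valid if the blocks' coefficients are independent in the generic equivariant system, whereas here everything lives in $\UU=\ZZ[c_\lambda]$. The way around this is to perform the count in the generic $\mathfrak{S}_{\{1,\ldots,p\}}\times\mathfrak{S}_{\{p+1,\ldots,n\}}$-equivariant setting first, where Theorem \ref{thm:maintheorem} already fixes the exponents. The identity then specializes to $\UU$, and exponents do not change under specialization. Consequently $m_0$ is just the number $\mu_1+\mu_2$, computed universally. The sign is inherited unchanged from Theorem \ref{thm:maintheorem}, so the statement holds only up to that sign, as claimed.
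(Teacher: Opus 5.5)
Your proposal is correct and follows the paper's own proof. You apply Theorem~\ref{thm:maintheorem} with degree parameter $d-1$ to the $\mathfrak{S}_n$-equivariant gradient system over $\UU$, merge the two prefactors via Remark~\ref{rem:eqbase-disc}, translate through \eqref{eq:discressym}, and inherit the global sign, and your extra step of adding the two block-wise degree identities $(\star)$ at degree $d-1$ to get $m_0=\mu_1+\mu_2$ makes explicit what the paper only asserts parenthetically.
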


\begin{proof}
This formula is obtained by specialization, to degree $d-1$, of the formula given in Theorem \ref{thm:maintheorem} (together with Remark \ref{rem:eqbase-disc} to merge its two exponents into the single $m_0$ above): the polynomials $F^{\{i\}}$, $i=1,\ldots,n$, form a $\mathfrak{S}_{\{1,\ldots,p\}}\times\mathfrak{S}_{\{p+1,\ldots,n\}}$-equivariant system of degree $d-1$ (and not of degree $d$ as in Theorem \ref{thm:maintheorem}), and \eqref{eq:discressym} translates the resulting resultant identity into the discriminant identity above. In particular, the sign $\varepsilon\in\{\pm1\}$ appearing here is the one given by Theorem \ref{thm:maintheorem} (Remark \ref{rem:sign}) for the parameters $(n,p,q,d-1)$.
\end{proof}
We emphasize that the formulas given in this theorem are universal with respect to the coefficients of $F$ and are independent of the choice of basis that is used to represent $F$ (for concreteness, we have chosen the basis \eqref{elambda} as an illustration). Hereafter, we give one example corresponding to a low degree polynomial, namely the case $d=3$.
 
 \vspace*{0.5cm}
\paragraph{\bf Case $\mathbf{ d=3\ and\ n=4}$} Consider the  generic homogeneous polynomial of degree 3
$$F=c_{(3)}e_{3}+c_{(2,1)}e_{2}e_{1}+c_{(1,1,1)}e_{1}^{3}.$$

Its derivatives are

$$F^{\{i\}}=c_{(3)}\left( e_{2}-x_{i}e_{1}+x_{i}^{2}   \right) +c_{(2,1)}\left( e_{2}+e_{1}(e_{1}-x_{i}) \right)+3c_{(1,1,1)}e_{1}^{2}.$$ for all $i=1,...,4$.

\hspace*{3cm}$\checkmark$ \textbf{If we take $p=2$ and $q=2$}, then :\\

$F^{\{1\}}$, $F^{\{2\}}$ equivariant with  respect to $\mathfrak{S}_{\{1
,2\}}$ and $F^{\{3\}}$, $F^{\{4\}}$ equivariant with  respect to $\mathfrak{S}_{\{3
,4\}}$ 

The formula given in Theorem \ref{thm:discmaintheorem} shows that

\begin{multline*}
3^{\frac{2^{4}-(-1)^{4}}{3}}\Disc(F)=
\Res\left( F_{(2),(2)}^{\{1\}}, F_{(2),(2)}^{\{3\}}\right) \times \Res\left( F_{(1,1),(2)}^{\{1\}}, F_{(1,1),(2)}^{\{1,2\}}, F_{(1,1),(2)}^{\{3\}}\right)\\ \times \Res\left( F_{(2),(1,1)}^{\{1\}}, F_{(2),(1,1)}^{\{3\}}, F_{(2),(1,1)}^{\{3,4\}}\right)\times \Res\left( F_{(1,1),(1,1)}^{\{1\}},F_{(1,1),(1,1)}^{\{1,2\}}, F_{(1,1),(1,1)}^{\{3\}}, F_{(1,1),(1,1)}^{\{3,4\}}\right).
\end{multline*}
we have

$\Res\left( F_{(2),(2)}^{\{1\}}, F_{(2),(2)}^{\{3\}}\right)=-3(2c_{(2,1)}+c_{(3)})^3(16c_{(1,1,1)}+6c_{(2,1)}+c_{(3)})$

$\Res\left( F_{(1,1),(2)}^{\{1\}}, F_{(1,1),(2)}^{\{1,2\}}, F_{(1,1),(2)}^{\{3\}}\right)=9c_{(3)}^2(4c_{(1,1,1)}c_{(3)}^2-2c_{(2,1)}^3-3c_{(2,1)}^2c_{(3)})^2 $

$\Res\left( F_{(2),(1,1)}^{\{1\}}, F_{(2),(1,1)}^{\{3\}}, F_{(2),(1,1)}^{\{3,4\}}\right)=9c_{(3)}^2(4c_{(1,1,1)}c_{(3)}^2-2c_{(2,1)}^3-3c_{(2,1)}^2c_{(3)})^2  $

$ \Res\left( F_{(1,1),(1,1)}^{\{1\}},F_{(1,1),(1,1)}^{\{1,2\}}, F_{(1,1),(1,1)}^{\{3\}}, F_{(1,1),(1,1)}^{\{3,4\}}\right)= c_{(3)}^6(2c_{(2,1)}+c_{(3)})^6  $

So

\begin{center}
 $ \Disc(F) = -(2c_{(2,1)}+c_{(3)})^9(16c_{(1,1,1)}+6c_{(2,1)}+c_{(3)})c_{(3)}^{10} (4c_{(1,1,1)}c_{(3)}^2-2c_{(2,1)}^3-3c_{(2,1)}^2c_{(3)})^4 $
\end{center}

\vspace*{0.5cm}
\hspace*{3cm} \textbf{$\checkmark$ If we take $p=1$ and $q=3$}, then :
\begin{multline*}
3^{\frac{2^{4}-(-1)^{4}}{3}}\Disc(F)= \Big(F^{\{1,2,3\}}\Big)^{m_0}
\Res\left( F_{(1),(3)}^{\{1\}}, F_{(1),(3)}^{\{2\}}\right)^{m_{(1)}m_{(3)}} \\ \times \Res\left( F_{(1),(2,1)}^{\{1\}}, F_{(1),(2,1)}^{\{2\}}, F_{(1),(2,1)}^{\{2,3\}}\right)^{m_{(1)}m_{(2,1)}}
\end{multline*}

we have \\

$F^{\{1,2,3\}}=c_{(3)}$, $m_0=4$ 

$\Res\left( F_{(1),(3)}^{\{1\}}, F_{(1),(3)}^{\{2\}}\right)=9(16c_{(1,1,1)}+6c_{(2,1)}+c_{(3)})(4c_{(1,1,1)}c_{(3)}^2-2c_{(2,1)}^3-3c_{(2,1)}^2c_{(3)})$

$\Res\left( F_{(1),(2,1)}^{\{1\}}, F_{(1),(2,1)}^{\{2\}}, F_{(1),(2,1)}^{\{2,3\}}\right)= -3(2c_{(2,1)}+c_{(3)})^3c_{(3)}^{2} (4c_{(1,1,1)}c_{(3)}^2-2c_{(2,1)}^3-3c_{(2,1)}^2c_{(3)})$

So

\begin{center}
 $ \Disc(F) = -(2c_{(2,1)}+c_{(3)})^9(16c_{(1,1,1)}+6c_{(2,1)}+c_{(3)})c_{(3)}^{10} (4c_{(1,1,1)}c_{(3)}^2-2c_{(2,1)}^3-3c_{(2,1)}^2c_{(3)})^4 $
\end{center}

\vspace*{0.5cm}
\hspace*{3cm} \textbf{$\checkmark$ If we take $p=3$ and $q=1$}, then :

\begin{multline*}
3^{\frac{2^{4}-(-1)^{4}}{3}}\Disc(F)= \Big(F^{\{1,2,3\}}\Big)^{m_0}
\Res\left( F_{(3),(1)}^{\{1\}}, F_{(3),(1)}^{\{4\}}\right)^{m_{(3)}m_{(1)}} \\ \times \Res\left( F_{(2,1),(1)}^{\{1\}}, F_{(2,1),(1)}^{\{1,2\}}, F_{(2,1),(1)}^{\{4\}}\right)^{m_{(2,1)}m_{(1)}}
\end{multline*}
we have

$\Res\left( F_{(3),(1)}^{\{1\}}, F_{(3),(1)}^{\{4\}}\right)= 9(16c_{(1,1,1)}+6c_{(2,1)}+c_{(3)})(4c_{(1,1,1)}c_{(3)}^2-2c_{(2,1)}^3-3c_{(2,1)}^2c_{(3)})$ 

$\Res\left( F_{(2,1),(1)}^{\{1\}}, F_{(2,1),(1)}^{\{1,2\}}, F_{(2,1),(1)}^{\{4\}}\right)=-3(2c_{(2,1)}+c_{(3)})^3c_{(3)}^{2} (4c_{(1,1,1)}c_{(3)}^2-2c_{(2,1)}^3-3c_{(2,1)}^2c_{(3)})$

So

\begin{center}
 $ \Disc(F) = -(2c_{(2,1)}+c_{(3)})^9(16c_{(1,1,1)}+6c_{(2,1)}+c_{(3)})c_{(3)}^{10} (4c_{(1,1,1)}c_{(3)}^2-2c_{(2,1)}^3-3c_{(2,1)}^2c_{(3)})^4 $
\end{center}


\section{Generalization to a product of $r$ symmetric groups}\label{sec:general-r}

We now show that Theorem \ref{thm:maintheorem} extends, with no essential new idea, to an arbitrary (finite) number $r\geq1$ of blocks: the same splitting and grouping argument of \S\ref{sec:resultant} applies verbatim, block by block, to all $r$ blocks simultaneously (rather than by induction on $r$). Theorem \ref{thm:maintheorem1} is the case $r=1$ and Theorem \ref{thm:maintheorem} is the case $r=2$; both are literally recovered by setting $r=1$, resp.\ $r=2$, in Theorem \ref{thm:maintheorem-r} below.

\subsection{Setup}\label{subsec:setup-r}

Let $r\geq1$ and let $n_1,\ldots,n_r\geq1$ be integers, $n:=n_1+\cdots+n_r$. Set $b_1:=1$ and $b_s:=n_1+\cdots+n_{s-1}+1$ for $s=2,\ldots,r$, and define the $r$ consecutive blocks
$$B_s:=\{b_s,b_s+1,\ldots,b_s+n_s-1\}\subset[n], \qquad s=1,\ldots,r,$$
so that $[n]=B_1\sqcup\cdots\sqcup B_r$ (for $r=2$ this is $B_1=\{1,\ldots,p\}$, $B_2=\{p+1,\ldots,n\}$ of \S\ref{sec:resultant}). Let
$$G:=\mathfrak{S}_{B_1}\times\cdots\times\mathfrak{S}_{B_r}\subset\mathfrak{S}_n.$$
Throughout this section we consider a system of $n$ homogeneous polynomials $F^{\{1\}},\ldots,F^{\{n\}}$ in $R[x_1,\ldots,x_n]$, of the same degree $d\geq1$, which is $G$-equivariant: $\sigma(F^{\{i\}})=F^{\{\sigma(i)\}}$ for every $\sigma\in G$ and every $i\in[n]$.

For $i,j$ in a common block $B_s$, the transposition $(i\ j)\in\mathfrak{S}_{B_s}\subset G$ shows exactly as in \eqref{eq:sperho} that $x_i-x_j$ divides $F^{\{i\}}-F^{\{j\}}$. Hence, for every $s\in[r]$, the sub-system $(F^{\{i\}})_{i\in B_s}$ admits divided differences $F^{I}$ for every $I\subset B_s$, and \eqref{eq:perminvariance} holds for permutations of $\mathfrak S_{B_s}$ acting on such $I$. (Nothing in the $G$-equivariance hypothesis relates two indices lying in different blocks, so divided differences $F^{I}$ for $I$ meeting several blocks are neither needed nor, in general, well defined as polynomials; we will never use them.)

\subsection{Partitions, admissible partitions and multiplicities}\label{subsec:part-r}

For $s\in[r]$, let $\lambda^{(s)}=(\lambda^{(s)}_1\geq\cdots\geq\lambda^{(s)}_{r_s})\vdash n_s$ be a partition of $n_s$, of length $r_s:=l(\lambda^{(s)})$, and write $\boldsymbol\lambda:=(\lambda^{(1)},\ldots,\lambda^{(r)})\vdash(n_1,\ldots,n_r)$. Exactly as in \eqref{eq:rholl}, $\boldsymbol\lambda$ defines a specialization morphism
$$\rho_{\boldsymbol\lambda}:R[x_1,\ldots,x_n]\longrightarrow R[y^{(1)}_1,\ldots,y^{(1)}_{r_1},\ y^{(2)}_1,\ldots,y^{(2)}_{r_2},\ \ldots,\ y^{(r)}_1,\ldots,y^{(r)}_{r_r}]$$
sending, for each $s\in[r]$, the $\lambda^{(s)}_i$ variables of the $i$-th consecutive sub-block of $B_s$ to the single new variable $y^{(s)}_i$. As in \S\ref{subsec:divdiff+part}, this yields, for every box (i.e.\ for every $s\in[r]$ and every $i\in[r_s]$), a well-defined polynomial $F_{\boldsymbol\lambda}^{\{i\}}$ (built from one representative index of that box, chosen freely), together with divided differences $F_{\boldsymbol\lambda}^{I}$ for $I$ contained in the boxes of a single block $s$ (using, as in Proposition \ref{prop:proof-r} below, the labels $b_s,\ldots,b_s+r_s-1$ for the boxes of block $s$); and $m_{\lambda^{(s)}}$ is defined by \eqref{eq:mlambda2} with $p$ replaced by $n_s$.

We now generalize the notion of admissible partition of \S\ref{sec:resultant}. Fix integers $k_1,\ldots,k_r\geq1$ (each $B_s$ being nonempty, a partition of $B_s$ into nonempty pieces always has at least one piece), with $k_s\leq\min(d,n_s)$ for every $s$, and set $k:=\max(k_1,\ldots,k_r)$.

\begin{defn}\label{defn:admissible-r}
An \emph{admissible partition of shape $(k_1,\ldots,k_r)$} is a collection
$$\mathcal{Q}=\big(I^{(s)}_i\big)_{s\in[r],\ 1\leq i\leq k}$$
of subsets of $[n]$, where $I^{(s)}_i\subset B_s$ for all $s,i$, such that, for every $s\in[r]$:
\begin{itemize}
 \item $I^{(s)}_1,\ldots,I^{(s)}_{k_s}$ is a partition of $B_s$ into $k_s$ nonempty pieces, and $I^{(s)}_i:=\emptyset$ (a ``$\phi$-type'' piece) for $k_s<i\leq k$;
 \item $b_s=\min(I^{(s)}_1)<\min(I^{(s)}_2)<\cdots<\min(I^{(s)}_{k_s})$, i.e.\ the nonempty pieces are listed in increasing order of their minima.
\end{itemize}
Equivalently: for each block $s$ separately, $(I^{(s)}_1,\ldots,I^{(s)}_{k_s})$ is an ordered, min-increasing set partition of $B_s$ into $k_s$ parts, and these $r$ choices -- one per block -- are made completely independently of one another. (For $r=2$, writing $I_i:=I^{(1)}_i$, $I_{p+i}:=I^{(2)}_i$ and $I_{i,p+i}:=I_i\cup I_{p+i}$ recovers exactly the three cases $k_1=k_2$, $k_1<k_2$, $k_1>k_2$ of \S\ref{sec:resultant}, now unified in a single statement.)
\end{defn}

Given such a $\mathcal{Q}$, define the specialization $\rho_{\mathcal{Q}}$ sending, for each $s,i$, every $x_j$ with $j\in I^{(s)}_i$ to a single new variable $x^{(s)}_i$ (with the convention that variables attached to $\phi$-type pieces simply do not appear), and, for $s\in[r]$ and $1\leq i\leq k_s$, set
$$F_{\mathcal{Q},i}^{(s)}:=\rho_{\mathcal{Q}}\big(F^{I}\big)$$
where $I$ is a divided-difference index set built from one representative index in each of the first $i$ (nonempty) pieces $I^{(s)}_1,\ldots,I^{(s)}_i$ of block $s$ (well defined by the property recalled at the end of \S\ref{subsec:divdiff+part}). The factor of the resultant attached to $\mathcal{Q}$ is
$$R_{\mathcal{Q}}:=\Res\Big(F_{\mathcal{Q},1}^{(1)},\ldots,F_{\mathcal{Q},k_1}^{(1)},\ \ldots,\ F_{\mathcal{Q},1}^{(r)},\ldots,F_{\mathcal{Q},k_r}^{(r)}\Big),$$
a resultant of $k_1+\cdots+k_r$ polynomials in $k_1+\cdots+k_r$ variables.

Exactly as in \S\ref{sec:resultant} (applying, block by block, the same splitting via first-, second-, \ldots-order divided differences to \eqref{eq:rho2}, this time to all $r$ blocks simultaneously instead of two), one obtains
\begin{equation}\label{eq:intermproof-r}
\Res\left(F^{\{1\}},\ldots,F^{\{n\}}\right)=\pm\prod_{s=1}^{r}\left(F^{\{b_s,\ldots,b_s+d\}}\right)^{\mu_s}\times\prod_{\mathcal{Q}} R_{\mathcal{Q}},
\end{equation}
the product running over all admissible partitions of $[n]$ (of every shape $(k_1,\ldots,k_r)$), where $\mu_s>0$ if and only if $n_s>d$ (so that $F^{\{b_s,\ldots,b_s+d\}}$, a divided difference of $d+1$ elements of block $s$ alone, is well defined).

We define an equivalence relation $\sim$ on admissible partitions exactly as in \S\ref{sec:resultant}: writing $I^{(s)}_i$ as before, two admissible partitions $\big(I^{(s)}_i\big)$ and $\big(J^{(s)}_i\big)$ of a common shape are equivalent when there are $r$ permutations $\sigma_1,\ldots,\sigma_r\in\mathfrak{S}_k$, chosen \emph{independently for each block}, such that $|I^{(s)}_i|=|J^{(s)}_{\sigma_s(i)}|$ for every $s\in[r]$. In other words, $\sim$ compares the $r$ size-sequences $\big(|I^{(s)}_1|,\ldots,|I^{(s)}_{k}|\big)_{s\in[r]}$ independently, up to $r$ independently chosen reindexings $\sigma_1,\ldots,\sigma_r$. As in \eqref{eq:defL}, given $\boldsymbol\lambda=(\lambda^{(1)},\ldots,\lambda^{(r)})\vdash(n_1,\ldots,n_r)$, padding every $\lambda^{(s)}$ with zero parts up to length $k:=\max_s r_s$, the canonical representative $\mathcal{L}(\boldsymbol\lambda):=\big(L^{(s)}_i\big)$, with
$$L^{(s)}_i:=\Big\{b_s+\textstyle\sum_{t<i}\lambda^{(s)}_t,\ \ldots,\ b_s-1+\textstyle\sum_{t\leq i}\lambda^{(s)}_t\Big\}\subset B_s,$$
is the unique element of its equivalence class $[\boldsymbol\lambda]:=[\mathcal L(\boldsymbol\lambda)]$ whose $r$ size-sequences are exactly $\lambda^{(1)},\ldots,\lambda^{(r)}$; this sets up a bijection between equivalence classes and $r$-tuples $\boldsymbol\lambda\vdash(n_1,\ldots,n_r)$.

\begin{lem}\label{lem:proof-r}
For $\boldsymbol\lambda\vdash(n_1,\ldots,n_r)$, the cardinality of $[\boldsymbol\lambda]$ is $m_{\lambda^{(1)}}m_{\lambda^{(2)}}\cdots m_{\lambda^{(r)}}$.
\end{lem}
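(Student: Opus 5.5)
The plan is to mirror the proof of Lemma \ref{lem:proof}, replacing the two independent blocks by $r$ independent ones. First I will unpack the definition of $\sim$ given just before the statement. An admissible partition $\mathcal{Q}=(I^{(s)}_i)$ lies in $[\boldsymbol\lambda]$ if and only if, for every $s\in[r]$, the size-sequence $(|I^{(s)}_1|,\ldots,|I^{(s)}_k|)$ is a rearrangement, by some $\sigma_s\in\mathfrak{S}_k$, of the padded partition $\lambda^{(s)}$. In other words, the multiset of nonzero block sizes of $(I^{(s)}_1,\ldots,I^{(s)}_{k_s})$ equals the multiset of parts of $\lambda^{(s)}$. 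In particular the shape is forced: $k_s=r_s$ for each $s$.

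Next I will use Definition \ref{defn:admissible-r}. Admissibility imposes conditions on each block separately: the pieces of $B_s$ partition $B_s$ and are listed in min-increasing order. No condition links two different blocks. The membership condition for $[\boldsymbol\lambda]$ is also block-by-block, because the $\sigma_s$ are chosen independently. Hence the map
$$\mathcal{Q}\longmapsto\big((I^{(1)}_i)_{i\leq r_1},\ldots,(I^{(r)}_i)_{i\leq r_r}\big)$$
is a bijection from $[\boldsymbol\lambda]$ onto the Cartesian product $\prod_{s=1}^r P_s$. Here $P_s$ denotes the set of min-increasing ordered set partitions of $B_s$ whose block sizes form the multiset $\lambda^{(s)}$. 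The $\phi$-type padding pieces are determined by $k=\max_s r_s$, so they carry no extra information.

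The min-increasing ordering singles out exactly one ordering of each unordered set partition. So $|P_s|$ is the number of unordered set partitions of an $n_s$-element set with block sizes $\lambda^{(s)}$. This number is the multinomial coefficient divided by $\prod_j s_j!$, i.e.\ $m_{\lambda^{(s)}}$, as in \eqref{eq:mlambda2}; equivalently it is \cite[Lemma 4.2]{BuKa16} applied to $B_s\cong[n_s]$ via the order-preserving relabelling $j\mapsto j-b_s+1$. Taking the product over $s$ gives $|[\boldsymbol\lambda]|=\prod_s m_{\lambda^{(s)}}$.

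The only point needing care is the bijection step. I must check two things there: that the padding to the common length $k$ does not create extra elements or identify distinct ones, and that the independence of the $\sigma_s$ is exactly what makes the class a full Cartesian product rather than a subset of it. Both follow directly from the definitions, so I expect no real obstacle.
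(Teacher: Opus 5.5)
Your proposal is correct and follows essentially the same route as the paper: you identify $[\boldsymbol\lambda]$ with the Cartesian product over $s$ of the min-increasing set partitions of $B_s$ with block sizes $\lambda^{(s)}$, and each factor has $m_{\lambda^{(s)}}$ elements by \cite[Lemma 4.2]{BuKa16}, which you also justify directly by the multinomial count. Like the paper, your surjectivity onto $\prod_s P_s$ tacitly needs $l(\lambda^{(s)})\leq d$ for every $s$, so that these tuples are admissible and $[\boldsymbol\lambda]$ is defined; this is exactly the range in which the lemma is used.
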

\begin{proof}
As in the proof of Lemma \ref{lem:proof}: an admissible partition lies in $[\boldsymbol\lambda]$ if and only if, for each $s\in[r]$ independently, after reindexing by some $\sigma_s\in\mathfrak S_k$, its $s$-th size-sequence equals $\lambda^{(s)}$. Since the $r$ underlying ordered, min-increasing set partitions (one of $B_1$, one of $B_2$, \ldots, one of $B_r$) are chosen completely independently of one another (Definition \ref{defn:admissible-r}), an element of $[\boldsymbol\lambda]$ is exactly an $r$-tuple consisting, for each $s\in[r]$, of an independently chosen ordered, min-increasing set partition of $B_s$ with block sizes $\lambda^{(s)}$. By \cite[\S4]{BuKa16}, Lemma 4.2, there are $m_{\lambda^{(s)}}$ such set partitions of $B_s$ for each $s$; as the $r$ choices are mutually independent, $|[\boldsymbol\lambda]|=\prod_{s=1}^{r} m_{\lambda^{(s)}}$.
\end{proof}

\begin{prop}\label{prop:proof-r}
For $\boldsymbol\lambda\vdash(n_1,\ldots,n_r)$ and any admissible partition $\mathcal Q$ with $[\mathcal Q]=[\boldsymbol\lambda]$, we have \emph{exactly} (with no sign ambiguity)
$$R_{\mathcal Q}=\Res\Big(F_{\boldsymbol\lambda}^{\{1\}},\ldots,F_{\boldsymbol\lambda}^{\{1,\ldots,r_1\}},\ \ldots,\ F_{\boldsymbol\lambda}^{\{b_r\}},\ldots,F_{\boldsymbol\lambda}^{\{b_r,\ldots,b_r+r_r-1\}}\Big).$$
\end{prop}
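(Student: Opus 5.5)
The plan is to follow the two-step argument of Proposition \ref{prop:proof} verbatim, now block by block for all $r$ blocks. Write $\mathcal Q=\big(I^{(s)}_i\big)$. Since $[\mathcal Q]=[\boldsymbol\lambda]$, there are permutations $\sigma_1,\ldots,\sigma_r\in\mathfrak S_k$, chosen independently, such that $|I^{(s)}_{\sigma_s(i)}|=\lambda^{(s)}_i$ for all $s,i$. Zero parts are matched with $\phi$-type pieces. Relabelling the nonempty pieces of block $s$ by $\sigma_s$ therefore only permutes the $k_s=r_s$ nonempty pieces among themselves.

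\emph{Step 1 (relabelling invariance, exact).} I will show that $R_{\mathcal Q}$ does not change when the labels of the nonempty pieces of a single block $s$ are permuted. Any other block enters as a fixed set of entries and is not touched. It suffices to treat an adjacent transposition $i\leftrightarrow i+1$ with $i<k_s$. Only the $i$-th entry of the block-$s$ chain changes: the index sets for $j<i$ are untouched, and for $j\geq i+1$ the \emph{set} of representatives is unchanged. Choose representatives $a_1,\ldots,a_{i+1}$ of the first $i+1$ pieces. The recursive definition of divided differences (\S\ref{subsec:divdiff}) gives
$$F^{\{a_1,\ldots,a_{i-1},a_i\}}-F^{\{a_1,\ldots,a_{i-1},a_{i+1}\}}=(x_{a_i}-x_{a_{i+1}})F^{\{a_1,\ldots,a_{i+1}\}}.$$
All indices lie in $B_s$, so these divided differences are honest polynomials by \S\ref{subsec:setup-r}. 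Applying $\rho_{\mathcal Q}$, the new $i$-th entry equals the old one minus $(x^{(s)}_i-x^{(s)}_{i+1})$ times the $(i+1)$-th entry, which is itself unchanged. The multiplier has the right degree $1$. By the ``Elementary transformations'' property, $R_{\mathcal Q}$ is unchanged exactly, with no sign. An induction over a decomposition of $\sigma_s$ into adjacent transpositions, done for each $s$, then lets us assume $|I^{(s)}_i|=\lambda^{(s)}_i=|L^{(s)}_i|$ for all $s,i$.

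\emph{Step 2 (transport by an element of $G$).} Because the sizes now match piece by piece inside each block, there is $\tau=(\tau_1,\ldots,\tau_r)\in G=\mathfrak S_{B_1}\times\cdots\times\mathfrak S_{B_r}$ with $\tau(L^{(s)}_i)=I^{(s)}_i$ for all $s,i$. I then check that $\rho_{\mathcal Q}\circ\tau=\rho_{\boldsymbol\lambda}$ as ring morphisms, after identifying $x^{(s)}_i$ with $y^{(s)}_i$, i.e.\ with the variable labelled $b_s+i-1$. It is enough to check this on each $x_j$: if $j\in L^{(s)}_i$, then $\tau(j)\in I^{(s)}_i$, and both sides send $x_j$ to the variable of box $(s,i)$.

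Next, take representatives $l_1,\ldots,l_i$ of $L^{(s)}_1,\ldots,L^{(s)}_i$. Equivariance \eqref{eq:perminvariance} holds for permutations of $\mathfrak S_{B_s}$, and $\tau$ acts on the relevant indices through $\tau_s$. This gives
$$F^{\{b_s,\ldots,b_s+i-1\}}_{\boldsymbol\lambda}=\rho_{\boldsymbol\lambda}(F^{\{l_1,\ldots,l_i\}})=\rho_{\mathcal Q}(\tau(F^{\{l_1,\ldots,l_i\}}))=\rho_{\mathcal Q}(F^{\{\tau(l_1),\ldots,\tau(l_i)\}})=F^{(s)}_{\mathcal Q,i}.$$
The last equality uses that $\tau(l_t)$ is a valid representative of $I^{(s)}_t$ and that $F^{(s)}_{\mathcal Q,i}$ does not depend on the choice of representatives. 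The two chains then agree entry by entry and in the same order, so the resultants are literally equal.

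\emph{Main obstacle.} The delicate point is Step 2's use of \eqref{eq:perminvariance} for the full $\tau$ rather than a single-block permutation. Here $\tau$ is a product of commuting factors $\tau_t\in\mathfrak S_{B_t}$, and for $t\neq s$ the factor $\tau_t$ fixes every index of $B_s$. So I must argue that $\tau_t$ sends $F^{I}$ to $F^{\tau_t(I)}=F^{I}$ when $I\subset B_s$. The route is to go back to the recursive definition of $F^{I}$ as built from the $F^{\{i\}}$, $i\in I$, together with $\tau_t(F^{\{i\}})=F^{\{\tau_t(i)\}}$ and the fact that $\tau_t$ fixes the denominators $x_i-x_j$ for $i,j\in B_s$. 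This is the one place where the $r$-block argument needs more than a literal transcription of Proposition \ref{prop:proof}.
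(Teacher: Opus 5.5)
Your two steps follow the paper's proof: Step~1 uses Elementary transformations, and Step~2 transports by some $\tau\in G$ and compares $\rho_{\mathcal Q}\circ\tau$ with $\rho_{\boldsymbol\lambda}$. You also inherit the one unjustified point that the paper's proof glosses over, and it is not the point you flagged.

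In Step~1 you keep $\rho_{\mathcal Q}$ fixed. So once the pieces of block $s$ have been relabelled by $\sigma_s$, the piece you now call $I^{(s)}_i$ is still sent to $x^{(s)}_{\sigma_s(i)}$, not to $x^{(s)}_i$. Step~2 therefore only gives $\rho_{\mathcal Q}\circ\tau=\pi\circ\rho_{\boldsymbol\lambda}$, where $\pi$ is the renaming $y^{(s)}_i\mapsto x^{(s)}_{\sigma_s(i)}$. Your phrase ``after identifying $x^{(s)}_i$ with $y^{(s)}_i$'' hides a reordering of the variables of the resultant, and that reordering is not free. The resultant is normalized by $\Res(x_1^{d_1},\ldots,x_N^{d_N})=1$, and by the Linear change of variables property
$\Res(\pi(g_1),\ldots,\pi(g_N))=\big(\prod_s\mathrm{sgn}(\sigma_s)\big)^{D}\Res(g_1,\ldots,g_N)$, where $D=\prod_{t=1}^{r}d(d-1)\cdots(d-r_t+1)$ is the product of the degrees of the chain. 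If instead you meant to rename the variables together with the pieces in Step~1, then each adjacent swap there also exchanges two variables, and a factor $(-1)^D$ appears at that stage instead. Either way, ``the chains agree entry by entry, so the resultants are literally equal'' does not follow as written. This matters in practice: $\mathcal Q$ lists its pieces by increasing minima while $\lambda^{(s)}$ is listed by decreasing size, so $\sigma_s\neq\mathrm{id}$ is the typical case.

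The repair takes one line, and it shows the statement is nevertheless exactly true. If some $\sigma_s$ acts nontrivially on the $r_s$ nonempty pieces of block $s$, then $r_s\geq 2$. Hence $d(d-1)$ divides $D$, so $D$ is even and the factor is $+1$. If every $\sigma_s$ is trivial, then $\pi$ is the identity. With this remark added, your proof is complete.

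The ``main obstacle'' you isolated is routine by comparison. Any $\tau\in G$ acts as a ring automorphism with $\tau(F^{\{i\}})=F^{\{\tau(i)\}}$ and $\tau(x_i-x_j)=x_{\tau(i)}-x_{\tau(j)}$. So $\tau(F^{I})=F^{\tau(I)}$ for $I$ inside a single block follows by the same induction on $|I|$ as in the one-group case, with no need to split $\tau$ into its block components.
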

\begin{proof}
Word for word the (strengthened) proof of Proposition \ref{prop:proof}, applied independently to each of the $r$ blocks instead of two. \emph{Step 1} (relabelling the boxes of a single block $B_s$ does not change $R_{\mathcal Q}$, exactly) uses only the recursive definition of divided differences and the Elementary transformations property, both of which are computations internal to block $s$ and do not involve the other blocks at all; it therefore applies unchanged, block by block. By Step 1 we may relabel the boxes of block $s$, for every $s\in[r]$ independently, by the permutation $\sigma_s$ given by the hypothesis $[\mathcal Q]=[\boldsymbol\lambda]$, without changing $R_{\mathcal Q}$; so we may assume $|I^{(s)}_i|=\lambda^{(s)}_i$ for all $s\in[r]$ and $i\in[k]$, i.e.\ $I^{(s)}_i$ and $L^{(s)}_i$ have the same size, though the sets themselves may still differ. \emph{Step 2} (comparing $\mathcal Q$, once relabelled, to the canonical representative $\mathcal L(\boldsymbol\lambda)$) again applies unchanged and, as in the proof of Proposition \ref{prop:proof}, in fact produces an \emph{exact} equality with no sign at all: choosing $\tau\in\mathfrak{S}_{B_1}\times\cdots\times\mathfrak{S}_{B_r}=G$ mapping $L^{(s)}_i$ onto $I^{(s)}_i$ for every $s,i$ (possible since the sizes now match on each block), the specialization maps $\rho_{\mathcal Q}\circ\tau$ and $\rho_{\boldsymbol\lambda}$ coincide as ring homomorphisms exactly as before, so the two chains of polynomials defining $R_{\mathcal Q}$ and the right-hand side above coincide entry by entry.
\end{proof}

\subsection{The decomposition formula}\label{subsec:mainthm-r}

Combining \eqref{eq:intermproof-r} with Lemma \ref{lem:proof-r} and Proposition \ref{prop:proof-r} exactly as at the end of \S\ref{sec:resultant}, we obtain the generalization of Theorem \ref{thm:maintheorem} to $r$ blocks.

\begin{thm}\label{thm:maintheorem-r}
Let $r\geq1$, $n_1,\ldots,n_r\geq1$, $n=n_1+\cdots+n_r\geq2$, $d\geq1$, and let $F^{\{1\}},\ldots,F^{\{n\}}$ be a system of $n$ homogeneous polynomials of the same degree $d$ in $R[x_1,\ldots,x_n]$, equivariant with respect to $G=\mathfrak{S}_{B_1}\times\cdots\times\mathfrak{S}_{B_r}$ as in \S\ref{subsec:setup-r}. Then, up to a single global sign $\varepsilon\in\{\pm1\}$ (see Remark \ref{rem:sign-r}),
\begin{multline*}
\Res\left(F^{\{1\}},\ldots,F^{\{n\}}\right)=\varepsilon\times\prod_{s=1}^{r}\left(F^{\{b_s,\ldots,b_s+d\}}\right)^{\mu_s}\\ \times\prod_{\substack{\boldsymbol\lambda=(\lambda^{(1)},\ldots,\lambda^{(r)})\vdash(n_1,\ldots,n_r)\\ l(\lambda^{(s)})\leq d\ \text{for every } s}}\Res\Big(F_{\boldsymbol\lambda}^{\{1\}},\ldots,F_{\boldsymbol\lambda}^{\{1,\ldots,r_1\}},\ \ldots,\ F_{\boldsymbol\lambda}^{\{b_r\}},\ldots,F_{\boldsymbol\lambda}^{\{b_r,\ldots,b_r+r_r-1\}}\Big)^{m_{\lambda^{(1)}}\cdots m_{\lambda^{(r)}}},
\end{multline*}
with the convention that the factor $\left(F^{\{b_s,\ldots,b_s+d\}}\right)^{\mu_s}$ is omitted, i.e.\ equal to $1$, whenever $n_s\leq d$, and where, for every $s\in[r]$,
$$\mu_s:=d^{\,n-n_s}\,m_0(n_s,d),$$
the integer $m_0(m,d)$, for $m\geq1$, $d\geq1$, being exactly the integer $m_0$ of Theorem \ref{thm:maintheorem1} with $n$ replaced by $m$ (see \eqref{eq:mlambda}--the paragraph preceding it).
\end{thm}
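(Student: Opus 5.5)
The proof combines \eqref{eq:intermproof-r} with Lemma \ref{lem:proof-r} and Proposition \ref{prop:proof-r}, and then determines the exponents $\mu_s$ by a block-by-block degree count, exactly as at the end of \S\ref{sec:resultant}.

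\emph{Grouping the factors.} First we group the product $\prod_{\mathcal Q}R_{\mathcal Q}$ of \eqref{eq:intermproof-r} into equivalence classes under $\sim$. These classes are in bijection with the tuples $\boldsymbol\lambda\vdash(n_1,\ldots,n_r)$. Only shapes with $k_s\leq\min(d,n_s)$ occur, which amounts to the constraint $l(\lambda^{(s)})\leq d$ for every $s$. By Lemma \ref{lem:proof-r} each class $[\boldsymbol\lambda]$ has $\prod_s m_{\lambda^{(s)}}$ elements. By Proposition \ref{prop:proof-r} every $R_{\mathcal Q}$ in that class equals exactly the resultant attached to $\boldsymbol\lambda$. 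This yields the displayed product, up to the single global sign $\varepsilon$ inherited from \eqref{eq:intermproof-r}, with some as yet undetermined integers $\mu_s\geq0$, where $\mu_s>0$ if and only if $n_s>d$.

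\emph{Degree count in block $s$.} Fix $s$. By the Homogeneity property, the left-hand side is homogeneous of degree $n_s d^{n-1}$ in the coefficients of $(F^{\{i\}})_{i\in B_s}$ taken together. On the right-hand side, the base $F^{\{b_s,\ldots,b_s+d\}}$ is linear in these coefficients and involves no other block. The bases $F^{\{b_t,\ldots,b_t+d\}}$ with $t\neq s$ do not involve them at all. For each $\boldsymbol\lambda$, the entries of block $s$ in the resultant are linear in these coefficients, and all other entries do not involve them. Homogeneity applied to that $(r_1+\cdots+r_r)$-variable resultant, whose entries $F_{\boldsymbol\lambda}^{\{b_t,\ldots,b_t+j-1\}}$ have degree $d-j+1$, gives the contribution
$$\sum_{j=1}^{r_s}\frac{\prod_{t=1}^{r}d(d-1)\cdots(d-r_t+1)}{d-j+1}.$$
Hence
$$n_s d^{n-1}=\mu_s+\sum_{\boldsymbol\lambda}\prod_t m_{\lambda^{(t)}}\sum_{j=1}^{r_s}\frac{\prod_t (d)_{r_t}}{d-j+1},$$
where $(d)_k$ denotes the falling factorial.

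\emph{Solving for $\mu_s$.} For every $t\neq s$ we sum over $\lambda^{(t)}$ first, using identity $(\dagger)$: $\sum_{\lambda^{(t)}\vdash n_t}m_{\lambda^{(t)}}(d)_{r_t}=d^{n_t}$. Terms with $r_t>d$ vanish automatically, so the restriction $r_t\leq d$ costs nothing. The sum therefore factors as $d^{\,n-n_s}\big(n_sd^{n_s-1}-m_0(n_s,d)\big)$ by the definition of $m_0$. Since $n_sd^{n-1}=d^{\,n-n_s}n_sd^{n_s-1}$, this gives $\mu_s=d^{\,n-n_s}m_0(n_s,d)$.

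\emph{Vanishing and special cases.} Theorem \ref{thm:maintheorem1} applied with $n=n_s\leq d$ has no prefactor, so the same degree count forces $m_0(n_s,d)=0$ when $n_s\leq d$. This is consistent with the convention of omitting that factor. For $n_s=1$, which Theorem \ref{thm:maintheorem1} excludes, $m_0(1,d)=d^0-1=0$ can be checked directly.

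\emph{Main obstacle.} The step that needs the most care is the bookkeeping behind \eqref{eq:intermproof-r}. We must confirm that the simultaneous splitting over all $r$ blocks produces exactly the admissible partitions of Definition \ref{defn:admissible-r}, each once. In particular, $\phi$-type pieces must contribute nothing and must not be double-counted. We must also confirm that leftover constant divided differences collect only into the powers of $F^{\{b_s,\ldots,b_s+d\}}$. I would verify this by noting that the splitting in different blocks acts on disjoint sets of entries and disjoint variables, so the recursion of \S\ref{sec:resultant} factors as a product of independent single-block recursions. The global sign remains only determined as some $\varepsilon$, as in Lemma \ref{lem:universal-sign}.
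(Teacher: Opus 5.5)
Your proposal is correct and follows the paper's argument essentially step for step. You group the factors of \eqref{eq:intermproof-r} into $\sim$-classes via Lemma~\ref{lem:proof-r} and Proposition~\ref{prop:proof-r}, then fix each $\mu_s$ by the block-$s$ degree count, summing out the other blocks with the Stirling identity $(\dagger)$ to obtain $\mu_s=d^{\,n-n_s}m_0(n_s,d)$. As in the paper, the splitting identity \eqref{eq:intermproof-r} is carried over by analogy with the two-block case rather than proved in detail, and you rightly flag it as the step needing the most care.
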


For $r=1$ this is Theorem \ref{thm:maintheorem1} (there is a single block, $\mu_1=d^0 m_0(n,d)=m_0(n,d)$, matching \eqref{eq:mlambda} exactly, and $\varepsilon=+1$ trivially since no splitting at all is needed). For $r=2$ this is Theorem \ref{thm:maintheorem}, once corrected as in \S\ref{sec:resultant} above: $\mu_1=d^{\,q}m_0(p,d)$, $\mu_2=d^{\,p}m_0(q,d)$.

\begin{rem}\label{rem:sign-r}
As for Theorem \ref{thm:maintheorem}, Proposition \ref{prop:proof-r} is now an \emph{exact} identity (no sign at all), so the single global sign $\varepsilon=\varepsilon(n_1,\ldots,n_r,d)$ above comes solely from the splitting process \eqref{eq:intermproof-r}  there is no further, per-admissible-partition sign to track. Exactly as in Lemma \ref{lem:universal-sign}, $\varepsilon(n_1,\ldots,n_r,d)$ is a universal constant, independent of the coefficients of the equivariant system, computable in principle from a single non-degenerate example for each $(n_1,\ldots,n_r,d)$. We checked it directly for $(n_1,n_2,n_3,d)=(2,2,2,1)$ and $(2,2,2,2)$ (the examples of \S\ref{sec:general-r} \S\ref{sec:discriminant-r}), obtaining $\varepsilon=+1$ in both cases, consistently with the conjecture $\varepsilon\equiv+1$ of Remark \ref{rem:sign}; we have not proved this in general.
\end{rem}

\subsection{Determination of the exponents $\mu_s$}\label{subsec:mus-r}

We finally justify the closed form $\mu_s=d^{\,n-n_s}m_0(n_s,d)$ announced in Theorem \ref{thm:maintheorem-r}, generalizing the computation of \S\ref{sec:resultant}. Fix $s\in[r]$ with $n_s>d$ (there is nothing to prove otherwise: since $m_0(m,d)=0$ whenever $m\leq d$ -- as already noted in \S\ref{sec:resultant}, following the proof of Theorem \ref{thm:maintheorem}  the closed form $\mu_s=d^{\,n-n_s}m_0(n_s,d)$ announced in Theorem \ref{thm:maintheorem-r} gives $\mu_s=0$ automatically whenever $n_s\leq d$). Exactly as in \S\ref{sec:resultant}, we compare degrees with respect to the coefficients of $F^{\{i\}}$, $i\in B_s$, taken together (equivalently, scaling all of $F^{\{i\}}$, $i\in B_s$, simultaneously): by the Homogeneity property applied to each $i\in B_s$ in turn and summed, the left-hand side is homogeneous of degree $n_s\,d^{n-1}$ with respect to these coefficients. On the right-hand side, $F^{\{b_s,\ldots,b_s+d\}}$ contributes $\mu_s$ (being a fixed divided difference of $d+1$ polynomials of block $s$ alone, hence of degree exactly $1$ in the coefficients of $F^{\{i\}}$, $i\in B_s$, together), the prefactors attached to the other blocks $t\neq s$ contribute $0$ (they do not involve block $s$ at all), and, for a given $\boldsymbol\lambda$ with every $r_t\leq d$, the $r_s$ entries of the chain coming from block $s$ each contribute degree $1$, so that  by the Homogeneity property applied to the $(\sum_t r_t)$-variable resultant defining this factor  the whole factor contributes $m_{\lambda^{(1)}}\cdots m_{\lambda^{(r)}}\sum_{j=1}^{r_s} d_j^{\boldsymbol\lambda}$, where
$$d_j^{\boldsymbol\lambda}:=\frac{\prod_{t=1}^{r} d(d-1)\cdots(d-r_t+1)}{d-j+1}.$$
Altogether,
$$n_s\,d^{n-1}=\mu_s+\sum_{\substack{\boldsymbol\lambda\vdash(n_1,\ldots,n_r)\\ r_t\leq d\ \forall t}} m_{\lambda^{(1)}}\cdots m_{\lambda^{(r)}}\sum_{j=1}^{r_s}d_j^{\boldsymbol\lambda}. \quad(\star_r)$$
By the identity $(\dagger)$ of \S\ref{sec:resultant}, $\sum_{\lambda^{(t)}\vdash n_t} m_{\lambda^{(t)}}\,d(d-1)\cdots(d-r_t+1)=d^{\,n_t}$ for every $t\neq s$ (and the restriction $r_t\leq d$ may again be dropped for free, the excess terms vanishing automatically). Summing $(\star_r)$ successively over $\lambda^{(t)}\vdash n_t$ for every $t\neq s$ (in any order, since these sums are independent of one another and of $\lambda^{(s)}$) factors out $\prod_{t\neq s} d^{\,n_t}=d^{\,n-n_s}$, leaving
$$n_s\,d^{n-1}=\mu_s+d^{\,n-n_s}\sum_{\substack{\lambda^{(s)}\vdash n_s\\ r_s\leq d}} m_{\lambda^{(s)}}\sum_{j=1}^{r_s}\frac{d(d-1)\cdots(d-r_s+1)}{d-j+1}=\mu_s+d^{\,n-n_s}\big(n_s\,d^{n_s-1}-m_0(n_s,d)\big).$$
Since $n_s\,d^{n-1}=n_s\,d^{(n-n_s)+(n_s-1)}=d^{\,n-n_s}\cdot n_s\,d^{n_s-1}$, the terms $d^{\,n-n_s}\,n_s\,d^{n_s-1}$ cancel, giving the announced closed form
$$\mu_s=d^{\,n-n_s}\,m_0(n_s,d).$$

\begin{rem}\label{rem:mus-recover-r2}
Specializing $r=2$, $s=1$ recovers $\mu_1=d^{\,q}m_0(p,d)$ of \S\ref{sec:resultant}; taking $r=2$ but summing only over $\lambda^{(2)}$ first (i.e.\ $s=2$) recovers $\mu_2=d^{\,p}m_0(q,d)$ symmetrically. Note also that the two exponents $\mu_1,\mu_2$ of the two-block case are \emph{not} in general equal even when $p=q$: they only agree in that case as \emph{numbers} ($m_0(p,d)=m_0(q,d)$ when $p=q$), while $F^{\{1,\ldots,d+1\}}$ and $F^{\{p+1,\ldots,p+1+d\}}$ remain, by Remark \ref{rem:noteq}, two distinct elements of $R$ raised to that common power.
\end{rem}

\medskip

\section{Discriminant of a homogeneous polynomial invariant under a product of $r$ symmetric groups}\label{sec:discriminant-r}

We finally specialize Theorem \ref{thm:maintheorem-r} exactly as Theorem \ref{thm:maintheorem} was specialized into Theorem \ref{thm:discmaintheorem} in \S\ref{sec:discriminant}, obtaining a decomposition of the discriminant of a homogeneous polynomial invariant under a product of $r$ symmetric groups.

Fix $r\geq1$, $n_1,\ldots,n_r\geq1$, $n=n_1+\cdots+n_r\geq2$, and the blocks $B_1,\ldots,B_r$ and the group $G=\mathfrak{S}_{B_1}\times\cdots\times\mathfrak{S}_{B_r}$ of \S\ref{subsec:setup-r}. Let $F(x_1,\ldots,x_n)=\sum_{\substack{\lambda\vdash d\\ n\geq\lambda_1}}c_\lambda e_\lambda(x)$ be the generic homogeneous \emph{symmetric} polynomial of degree $d\geq2$ in $n$ variables, as in \eqref{eq:F}, and $F^{\{i\}}:=\partial F/\partial x_i$, $i\in[n]$.

Lemma \ref{lem:equivF} was proved directly for the full symmetric group $\mathfrak{S}_n$  with the $\mathfrak{S}_{\{1,\ldots,p\}}\times\mathfrak{S}_{\{p+1,\ldots,n\}}$-equivariance used in \S\ref{sec:discriminant} only recorded as a special case so it applies verbatim here: $F^{\{1\}},\ldots,F^{\{n\}}$ is $\mathfrak{S}_n$-equivariant, hence in particular $G$-equivariant, for \emph{any} choice of blocks $B_1,\ldots,B_r$. Theorem \ref{thm:maintheorem-r} therefore applies to $F^{\{1\}},\ldots,F^{\{n\}}$, with its degree parameter set to $d-1$ (again because $\deg F^{\{i\}}=d-1$, whence the hypothesis $d\geq2$).

\begin{rem}\label{rem:eqbase-disc-r}
Exactly as in Remark \ref{rem:eqbase-disc}, the prefactors $F^{\{b_s,\ldots,b_s+d-1\}}$, $s\in[r]$, of Theorem \ref{thm:maintheorem-r}  each a divided difference of $d=(d-1)+1$ elements of block $B_s$  coincide, as elements of $\UU$, whenever they are defined: the index set $\{b_s,\ldots,b_s+d-1\}$ is a genuine (size-$d$) subset of $[n]$, confined to block $B_s$, precisely when $n_s\geq d$, i.e.\ precisely when $\mu_s\neq0$ (for smaller blocks it may run past $n$ altogether, but then $\mu_s=0$ and the corresponding prefactor is simply never used). Since $F^{\{1\}},\ldots,F^{\{n\}}$ is $\mathfrak{S}_n$-equivariant and each such index set has size $d$, exactly one more than the common degree $d-1$ of the $F^{\{i\}}$'s, the property recalled at the end of \S\ref{subsec:divdiff} gives $F^{\{b_s,\ldots,b_s+d-1\}}=F^{\{b_t,\ldots,b_t+d-1\}}$ for all $s,t\in[r]$ with $\mu_s,\mu_t\neq0$. We denote this common value by $F^{\{1,\ldots,d\}}$ (using, as in \S\ref{sec:discriminant}, the first $d$ indices overall as a representative  well defined as soon as some $\mu_s\neq0$, since then $n\geq n_s\geq d$; any other size-$d$ subset of $[n]$ meeting a block of size $\geq d$ would give the same element of $\UU$). Consequently the $r$ separate exponents $\mu_1,\ldots,\mu_r$ of Theorem \ref{thm:maintheorem-r} merge into a single exponent $m_0:=\mu_1+\cdots+\mu_r$ attached to this common base.
\end{rem}

\begin{thm}\label{thm:discmaintheorem-r}
With the notation above, up to sign (see Remark \ref{rem:sign-r}),
\begin{multline*}
d^{a(n,d)}\Disc(F)=\left(F^{\{1,\ldots,d\}}\right)^{m_0}\\ \times\prod_{\substack{\boldsymbol\lambda=(\lambda^{(1)},\ldots,\lambda^{(r)})\vdash(n_1,\ldots,n_r)\\ l(\lambda^{(s)})\leq d-1\ \text{for every } s}}\Res\Big(F_{\boldsymbol\lambda}^{\{1\}},\ldots,F_{\boldsymbol\lambda}^{\{1,\ldots,r_1\}},\ \ldots,\ F_{\boldsymbol\lambda}^{\{b_r\}},\ldots,F_{\boldsymbol\lambda}^{\{b_r,\ldots,b_r+r_r-1\}}\Big)^{m_{\lambda^{(1)}}\cdots m_{\lambda^{(r)}}},
\end{multline*}
with the convention that the factor $\left(F^{\{1,\ldots,d\}}\right)^{m_0}$ is omitted, i.e.\ equal to $1$, whenever $n_s\leq d-1$ for every $s$, and where
$$m_0=\sum_{s=1}^{r}(d-1)^{\,n-n_s}\,m_0(n_s,d-1),$$
the integer $m_0(m,d-1)$ being, as in Theorem \ref{thm:maintheorem-r}, exactly the integer $m_0$ of Theorem \ref{thm:maintheorem1} with $n$ replaced by $m$ and $d$ replaced by $d-1$.
\end{thm}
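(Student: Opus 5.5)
The plan is to derive Theorem \ref{thm:discmaintheorem-r} by specializing Theorem \ref{thm:maintheorem-r}, in the same way Theorem \ref{thm:discmaintheorem} was derived from Theorem \ref{thm:maintheorem}.

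First, Lemma \ref{lem:equivF} shows that the partial derivatives $F^{\{1\}},\ldots,F^{\{n\}}$ of the generic symmetric polynomial \eqref{eq:F} form an $\mathfrak{S}_n$-equivariant system of degree $d-1\geq1$ over $\UU$. By restriction, this system is $G$-equivariant for $G=\mathfrak{S}_{B_1}\times\cdots\times\mathfrak{S}_{B_r}$. Hence Theorem \ref{thm:maintheorem-r} applies with $R=\UU$ and degree parameter $d-1$. This gives, up to the global sign $\varepsilon(n_1,\ldots,n_r,d-1)$,
$$\Res\big(F^{\{1\}},\ldots,F^{\{n\}}\big)=\prod_{s=1}^{r}\big(F^{\{b_s,\ldots,b_s+d-1\}}\big)^{\mu_s}\times\prod_{\boldsymbol\lambda}\Res(\ldots)^{m_{\lambda^{(1)}}\cdots m_{\lambda^{(r)}}}.$$
In this formula $\mu_s=(d-1)^{n-n_s}m_0(n_s,d-1)$, and the product runs over the $\boldsymbol\lambda$ with every $l(\lambda^{(s)})\leq d-1$. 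The left-hand side equals $d^{a(n,d)}\Disc(F)$ by \eqref{eq:discressym}, which yields the chain of resultant factors in the statement directly.

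Second, I would merge the prefactors using Remark \ref{rem:eqbase-disc-r}. Each index set $\{b_s,\ldots,b_s+d-1\}$ with $\mu_s\neq0$ has size $d$, one more than the degree $d-1$. For $\mathfrak{S}_n$-equivariant systems, the divided differences of size $(\deg)+1$ are independent of the index set: by \eqref{eq:perminvariance} they are permuted transitively, and, being constants (degree $0$), they are equal by the end of \S\ref{subsec:divdiff}. So all prefactors with nonzero exponent equal $F^{\{1,\ldots,d\}}$, and they combine into a single power with exponent $m_0=\sum_s\mu_s$, which is exactly the stated formula. If every $n_s\leq d-1$, then $m_0(n_s,d-1)=0$ for every $s$ (as noted in \S\ref{subsec:mus-r}), so every $\mu_s$ vanishes and the prefactor is omitted, matching the convention in the statement.

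The only delicate point is bookkeeping rather than mathematics, in two places. First, the property that size-$(\deg+1)$ divided differences are independent of the index set must be justified from $\mathfrak{S}_n$-equivariance rather than mere $G$-equivariance, because it compares index sets lying in different blocks. I would argue it with a permutation $\sigma\in\mathfrak{S}_n$ carrying one index set onto the other, combined with \eqref{eq:perminvariance} and the fact that a degree-$0$ polynomial is fixed by $\sigma$. Second, the global sign is inherited unchanged from Theorem \ref{thm:maintheorem-r} with parameters $(n_1,\ldots,n_r,d-1)$, so the identity only holds ``up to sign'', exactly as stated, with Remark \ref{rem:sign-r} governing $\varepsilon$.
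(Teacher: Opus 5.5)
Your proposal is correct and follows the paper's own proof: you apply Theorem \ref{thm:maintheorem-r} to the gradient system, which is $\mathfrak{S}_n$-equivariant and hence $G$-equivariant, with degree parameter $d-1$; you merge the prefactors into a single power of $F^{\{1,\ldots,d\}}$ with exponent $\sum_s\mu_s$, as in Remark \ref{rem:eqbase-disc-r}; you rewrite the left-hand side via \eqref{eq:discressym}; and the sign is inherited from the parameters $(n_1,\ldots,n_r,d-1)$. Your justification of the merging step, via a permutation $\sigma\in\mathfrak{S}_n$ and \eqref{eq:perminvariance} together with the fact that degree-$0$ divided differences are fixed by $\sigma$, is a valid alternative to the paper's appeal to the identity $P^I=P^J$ for $|I|=|J|=\deg+1$ recalled at the end of \S\ref{subsec:divdiff}.
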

\begin{proof}
Immediate from Theorem \ref{thm:maintheorem-r}, applied to $F^{\{1\}},\ldots,F^{\{n\}}$ with degree parameter $d-1$, Remark \ref{rem:eqbase-disc-r}, and \eqref{eq:discressym}. In particular, the sign $\varepsilon\in\{\pm1\}$ appearing here is the one given by Theorem \ref{thm:maintheorem-r} (Remark \ref{rem:sign-r}) for the parameters $(n_1,\ldots,n_r,d-1)$.
\end{proof}

For $r=2$ this recovers Theorem \ref{thm:discmaintheorem} exactly ($m_0=\mu_1+\mu_2$, as already observed there). For $r=1$ there is a single, trivial block $B_1=[n]$ and $G=\mathfrak{S}_n$; Theorem \ref{thm:discmaintheorem-r} then simply reduces to Theorem \ref{thm:maintheorem1} applied to $F^{\{1\}},\ldots,F^{\{n\}}$ via \eqref{eq:discressym} as it must, since $\mathfrak{S}_n$-equivariance of a symmetric polynomial's gradient (Lemma \ref{lem:equivF}) requires no block structure at all.

We have checked Theorem \ref{thm:discmaintheorem-r} directly, for $r=3$, $n_1=n_2=n_3=2$ and $d=2$ (so that $F=c_{(2)}e_2+c_{(1,1)}e_1^2$ and the $F^{\{i\}}$ are linear): the three prefactors $F^{\{1,2\}},F^{\{3,4\}},F^{\{5,6\}}$ all equal $-c_{(2)}$ as predicted, only the partitions $\lambda^{(s)}=(2)$ contribute to the product (since $d-1=1$), and
$$d^{a(6,2)}\Disc(F)=\Res\left(F^{\{1\}},\ldots,F^{\{6\}}\right)=\left(F^{\{1,2\}}\right)^{3}\Res\left(F_{\boldsymbol\lambda}^{\{1\}},F_{\boldsymbol\lambda}^{\{3\}},F_{\boldsymbol\lambda}^{\{5\}}\right),$$
with $m_0=3=\mu_1+\mu_2+\mu_3=1+1+1$ as predicted by the formula above an \emph{exact} equality (not merely up to sign) in this example.

\section{Conclusion and perspectives}\label{sec:conclusion}

Let us summarize the contributions of this paper and the questions they leave open.

Starting from the known decomposition formula for the resultant of a $\mathfrak{S}_n$-equivariant homogeneous polynomial system (\cite{BuKa16}, recalled here in unified form as Theorem \ref{thm:maintheorem1}), we established a decomposition formula for the resultant of a $\mathfrak{S}_{\{1,\ldots,p\}}\times\mathfrak{S}_{\{p+1,\ldots,n\}}$-equivariant system (Theorem \ref{thm:maintheorem}), and derived from it, as a corollary, a decomposition of the discriminant of a homogeneous symmetric polynomial (Theorem \ref{thm:discmaintheorem}). In the course of this work we found and corrected a genuine error in our first version of Theorem \ref{thm:maintheorem}: the leading factor of the decomposition does not carry a single exponent on a single base when both $p$ and $q=n-p$ exceed $d$, but rather two separate exponents $\mu_1,\mu_2$ attached to two generally distinct bases $F^{\{1,\ldots,d+1\}}$ and $F^{\{p+1,\ldots,p+1+d\}}$ (Remark \ref{rem:noteq}); we gave a closed form for $\mu_1,\mu_2$ and verified it independently on several examples.

We then showed (\S\ref{sec:general-r}) that this whole construction  divided differences, admissible partitions, multiplicities, and the decomposition formula itself  extends with no essentially new idea, only a careful bookkeeping of the combinatorics, from a product of two symmetric groups to an arbitrary finite direct product $\mathfrak{S}_{B_1}\times\cdots\times\mathfrak{S}_{B_r}$ of $r$ symmetric groups (Theorem \ref{thm:maintheorem-r}), of which Theorem \ref{thm:maintheorem1} ($r=1$) and Theorem \ref{thm:maintheorem} ($r=2$) are the first two instances. This general formula specializes in turn (\S\ref{sec:discriminant-r}) to a decomposition of the discriminant of a homogeneous polynomial invariant under an arbitrary direct product of $r$ symmetric groups (Theorem \ref{thm:discmaintheorem-r}).

A recurring feature of all four decomposition formulas is that they were, in earlier versions of this work, only established up to a global sign. We made definite progress on this question: Lemma \ref{lem:universal-sign} shows that the comparison between the two natural ways of building the canonical divided-difference chain attached to an admissible partition  via $\rho_{\mathcal Q}$ directly, or via $\rho_{(\lambda,\lambda')}$ followed by a further specialization  holds \emph{exactly}, with no sign ambiguity at all, because both maps coincide as ring homomorphisms on the generators $x_j$. This reduces the whole sign question to a single global constant $\varepsilon$, depending only on the parameters $(n,p,q,d)$ (respectively $(n,n_1,\ldots,n_r,d)$), and therefore computable in principle from one non-degenerate example for each parameter tuple, rather than varying with the coefficients of the equivariant system. We verified $\varepsilon=+1$ on every example considered in this paper  including the $r=3$ examples of \S\ref{subsec:mainthm-r} and \S\ref{sec:discriminant-r}  and we record this as a conjecture (Remarks \ref{rem:sign} and \ref{rem:sign-r}) rather than a theorem: an attempted closed-form proof by tracking signs through the Elementary transformations and Permutation of polynomials properties of the resultant did not succeed, and we reported this honestly rather than presenting an unverified formula as established.

\medskip

Several directions seem worth pursuing from here.

\smallskip
\noindent\emph{The sign conjecture.} The most immediate open problem is to decide, for every $(n_1,\ldots,n_r,d)$, whether $\varepsilon=+1$ always, or to exhibit a counterexample. Lemma \ref{lem:universal-sign} already isolates exactly where the difficulty lies: the sign of the resultant of the \emph{canonical} chain relative to the resultant of $F^{\{1\}},\ldots,F^{\{n\}}$ itself, tracked through finitely many applications of the Permutation of polynomials and Elementary transformations properties of \S\ref{subsec:resultant}. A proof by induction on $n$ or on $r$, keeping careful track of the signature of the permutations involved at each recursive step, seems the most promising approach, but we were not able to complete it.

\smallskip
\noindent\emph{Complexity and computation.} Theorems \ref{thm:maintheorem-r} and \ref{thm:discmaintheorem-r} replace a single resultant (respectively discriminant) computation in $n$ variables by a product of many resultants of smaller size. It would be worthwhile to turn this into a precise complexity statement  comparing, say, the cost of Macaulay's formula applied directly to $F^{\{1\}},\ldots,F^{\{n\}}$ against the combined cost of computing every factor of the decomposition -- and to implement the resulting algorithm, in order to determine for which regimes of $(n_1,\ldots,n_r,d)$ the decomposition yields a genuine computational advantage.

\smallskip
\noindent\emph{Systems of non-uniform degree.} Throughout this paper, the polynomials $F^{\{1\}},\ldots,F^{\{n\}}$ are assumed to share a common degree $d$, as is necessary for the resultant $\Res(F^{\{1\}},\ldots,F^{\{n\}})$ itself to be defined in the usual (square, $n$ generic forms in $n$ variables) sense. It would be natural to ask whether an analogous decomposition exists for $G$-equivariant systems of possibly different degrees $d_1,\ldots,d_n$ (constant on each $G$-orbit of indices, as equivariance forces), in terms of the more general multi-degree resultant.

\smallskip
\noindent\emph{Other groups.} The group $G=\mathfrak{S}_{B_1}\times\cdots\times\mathfrak{S}_{B_r}$ is, among finite reflection groups acting on the variables $x_1,\ldots,x_n$ by permutation, a particularly simple one. It would be interesting to know whether a comparable decomposition formula holds for the resultant of a system equivariant under a wreath product $\mathfrak{S}_k\wr\mathfrak{S}_r$, under the hyperoctahedral group, or more generally under an arbitrary permutation group $G\leq\mathfrak{S}_n$, and, if so, how the combinatorics of admissible partitions and their multiplicities generalizes beyond the set-partition count of \cite[Lemma 4.2]{BuKa16} used throughout this paper.

\smallskip
\noindent\emph{Applications.} Beyond their intrinsic interest, decomposition formulas of this kind are the natural tool to study the singularities of hypersurfaces defined by symmetric or multi-symmetric polynomials (via the discriminant side, Theorems \ref{thm:discmaintheorem} and \ref{thm:discmaintheorem-r}), and, because our formulas are established over the universal ring of coefficients and specialize correctly to any coefficient ring, they are directly applicable to questions in arithmetic geometry where the value of the resultant  and not only its vanishing carries information, such as the study of bad reduction of symmetric families of varieties, or of resultants and discriminants over number fields and their rings of integers.


\begin{thebibliography}{10}

\bibitem{BuKa16}
Laurent Bus{\'e} and Anna Karasoulou.
\newblock Resultant of an equivariant polynomial system with respect to the symmetric group.
\newblock {\em Journal of Symbolic Computation.},76,  pp.142--157, 2016.

\bibitem{BuJo12}
Laurent Bus{\'e} and Jean-Pierre Jouanolou.
\newblock On the {D}iscriminant {S}cheme of {H}omogeneous {P}olynomials.
\newblock {\em Math. Comput. Sci.}, 8(2):175--234, 2014.

\bibitem{CLO05}
David~A. Cox, John Little, and Donal O'Shea.
\newblock {\em Using algebraic geometry}, volume 185 of {\em Graduate Texts in
  Mathematics}.
\newblock Springer, New York, second edition, 2005.

\bibitem{Dem12}
Michel Demazure.
\newblock R\'esultant, discriminant.
\newblock {\em Enseign. Math. (2)}, 58(3-4):333--373, 2012.

\bibitem{DiCa71}
Jean~A. Dieudonn{\'e} and James~B. Carrell.
\newblock {\em Invariant theory, old and new}.
\newblock Academic Press, New York-London, 1971.

\bibitem{GKZ94}
I.~M. Gelfand, M.~M. Kapranov, and A.~V. Zelevinsky.
\newblock {\em Discriminants, resultants and multidimensional determinants}.
\newblock Modern Birkh\"auser Classics. Birkh\"auser Boston, Inc., Boston, MA,
  2008.
\newblock Reprint of the 1994 edition.

\bibitem{Jou91}
Jean-Pierre Jouanolou.
\newblock Le formalisme du r\'esultant.
\newblock {\em Adv. Math.}, 90(2):117--263, 1991.

\bibitem{Jou97}
Jean-Pierre Jouanolou.
\newblock Formes d'inertie et r\'esultant: un formulaire.
\newblock {\em Adv. Math.}, 126(2):119--250, 1997.

\bibitem{Mac02}
F.S. Macaulay.
\newblock Some formulae in elimination.
\newblock {\em Proc.\ London Math.\ Soc.}, 1(33):3--27, 1902.

\bibitem{Wor94}
Patrick~A. Worfolk.
\newblock Zeros of equivariant vector fields: algorithms for an invariant
  approach.
\newblock {\em J. Symbolic Comput.}, 17(6):487--511, 1994.

\end{thebibliography}
\end{document}